\newenvironment{proof}[1][Proof]{\noindent\textbf{#1.} }{$\hfill\Box$}
\numberwithin{equation}{section}
\begin{document}

\title{Sufficient Conditions for Existence of Solutions to Vectorial Differential
Inclusions and Applications}
\author{Ana Cristina Barroso
 \thanks{A.C. Barroso is with the Maths Dep. and CMAF, University of Lisbon,
Lisbon, Portugal.
E-mail: abarroso@ptmat.fc.ul.pt} \;
Gisella Croce
\thanks{G. Croce  is with the LMAH, Le Havre University, Le Havre,
France. E-mail: gisella.croce@univ-lehavre.fr} \;
 Ana Margarida Ribeiro
\thanks{A.M. Ribeiro is with the Maths Dep. and CMA, FCT-Universidade Nova de Lisboa,
Caparica, Portugal.
E-mail: amfr@fct.unl.pt}
}

\date{}

\maketitle

\begin{abstract}
In this paper we discuss the existence of solutions to vectorial
differential inclusions,
refining a result proved in Dacorogna and Marcellini
\cite{Dac-Marc-CR}. We
investigate sufficient conditions for existence, more flexible than those available in the literature, so
that important
applications can be fitted in the theory.
We also study some of these applications.
\end{abstract}


\newtheorem{theorem}{Theorem}[section]
\newtheorem{lemma}[theorem]{Lemma}
\newtheorem{proposition}[theorem]{Proposition}
\newtheorem{remark}[theorem]{Remark}
\newtheorem{corollary}[theorem]{Corollary}
\newtheorem{criterion}[theorem]{Criterion}
\newtheorem{definition}[theorem]{Definition}
\newtheorem{example}[theorem]{Example}
\newtheorem{notation}[theorem]{Notation}
\newtheorem{summary}[theorem]{Summary}

\section{Introduction}

In this paper we discuss the existence of
$W^{1,\infty}(\Omega,\mathbb{R}^{N})$ solutions to the
vectorial differential inclusion problem
\begin{equation}\label{general differential inclusion}
\left\{\begin{array}{l}
Du(x)\in E,\ a.e.\ x\in\Omega,\vspace{0.2cm}\\
u(x)=\varphi(x),\ x\in\partial\Omega,
\end{array}\right.
\end{equation}
where $\Omega$ is an open subset of $\mathbb{R}^n$, $E$ is a given
subset of $\mathbb{R}^{N\times n}$ and
$\varphi:\overline{\Omega}\to\mathbb{R}^{N}$.
This problem has been intensively studied
by Dacorogna and Marcellini \cite{Dac-Marc-CR}, \cite{Dac-Marc}
through the Baire categories method (see also M\"uller and {\v{S}}ver{\'a}k
\cite{Muller-Sverakcounterexamples}).
Their result provides a sufficient condition for existence of solutions
related with the gradient of the boundary data. It asserts that, if this
gradient belongs to a convenient set enjoying the so called relaxation property with respect to the set $E$
(see Definition \ref{prop-relax}, and Theorem \ref{abstract existence theorem} due to
Dacorogna and Pisante \cite{Dacorogna-Pisante}) a dense set of solutions to
(\ref{general differential inclusion}) exists.

In the applications, direct verification of the relaxation property is a hard task and sufficient
conditions for it were also
obtained by Dacorogna and Marcellini \cite{Dac-Marc}, namely, the approximation property, cf.
Definition \ref{Approximation property} and Theorem~\ref{Relaxation property theorem for RcoE}
(see also \cite[Theorem 6.15]{Dac-Marc} for a more general version).
If such a property is satisfied, we can get as a sufficient condition for existence of solutions to
(\ref{general differential inclusion})
\begin{equation}\label{generalsufficientcondition}
D\varphi(x)\in E\cup \operatorname*{int}\operatorname*{Rco}E,\ a.e.\ x\in\Omega,
\end{equation}
where $\operatorname*{int}\operatorname*{Rco}E$ denotes the interior of the rank one convex hull
of the set $E$, that is,
the interior of the smallest rank one convex set containing $E$
(see Definition \ref{convexhulls}).

However, there are interesting applications for which $\operatorname*{int}\operatorname*{Rco}E$ is empty,
and thus, condition (\ref{generalsufficientcondition}) is much too restrictive. This is the case of a
well known problem solved by Kirchheim in \cite{Kirchheim} that we will discuss in Section \ref{sectionexamples}.
In view of this, our goal in Section \ref{sectionsufficientconditions} is to obtain sufficient
conditions for the relaxation property which can be handled in the applications and which are more
flexible than (\ref{generalsufficientcondition}). More precisely, we shall be able to deal with
subsets of the hull $\operatorname*{Rco}_fE$ defined as
$$\operatorname*{Rco}\nolimits_{f}E  =\left\{
\xi\in\mathbb{R}^{N\times n}:f\left( \xi\right)  \leq0\text{, for
every rank one convex }
f\in\mathcal{F}^{E}\right\},$$
where
$\mathcal{F}^{E}=\{f:\mathbb{R}^{N\times n}\rightarrow \mathbb{R}:\left.  f\right\vert _{E}\leq0\}$,
and which is, in general, a larger hull than $\operatorname*{Rco}E$.
Recovering results due to Pedregal \cite{Pedregal} and to M{\"u}ller and {\v{S}}ver{\'a}k
\cite{Muller-Sverakcounterexamples}, we obtain in
Theorem \ref{Rcofcharact} the following characterization of this type of hulls for compact sets $E$:
\begin{equation}\label{Rcofcharactintro}
\mathrm{Rco}_f E=\left\{\begin{array}{l}
\xi\in\mathbb{R}^{N\times n}:\ \forall\ \varepsilon>0\ \exists\ I\in\mathbb{N},\ \exists\
 (\lambda_i,\xi_i)_{i=1,...,I} \text{ with } \lambda_i>0,\vspace{0.2cm}\\  \displaystyle{
 \sum_{i=1}^I\lambda_i=1} \text{ satisfying } (H_I(U)),\ \xi=\sum_{i=1}^I\lambda_i\xi_i,\ \sum_{\substack{i=1\\\xi_i\notin
B_\varepsilon(E)}}^I\lambda_i<\varepsilon\end{array} \right\},
\end{equation}
where $U$ is an open and bounded set containing $\mathrm{Rco}_f E$ and the property
$(H_I(U))$ is introduced in Definition \ref{defH_I}.
Thanks to this characterization we will  prove, in particular, the following result
(cf. Corollary \ref{corol1}).

\begin{theorem}
Let $E\subset\mathbb{R}^{N\times n}$ be bounded and such that $E$ and
$\operatorname*{Rco}\nolimits_{f}E$ have the
approximation property with $K_\delta=\operatorname*{Rco}\nolimits_{f}E_\delta$ for some compact sets
$E_\delta\subset\mathbb{R}^{N\times n}$. Then $\mathrm{int}\operatorname*{Rco}\nolimits_{f}E$
has the relaxation property with respect to $E$.
\end{theorem}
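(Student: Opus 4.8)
The plan is to obtain the conclusion directly from the characterization of $\operatorname{Rco}_{f}E$ in Theorem~\ref{Rcofcharact}, combined with the approximation property, by reproducing the mechanism behind Theorem~\ref{Relaxation property theorem for RcoE} with $\operatorname{Rco}_{f}$ playing the role of $\operatorname{Rco}$. Following Definition~\ref{prop-relax}, I fix a bounded open set $\Omega\subset\mathbb{R}^{n}$ and the affine map $u_{\xi}$ with $Du_{\xi}\equiv\xi\in\operatorname{int}\operatorname{Rco}_{f}E$, and I must construct piecewise affine maps $u_{\nu}\in u_{\xi}+W_{0}^{1,\infty}(\Omega,\mathbb{R}^{N})$, with equibounded gradients taking values in $E\cup\operatorname{int}\operatorname{Rco}_{f}E$, converging weakly-$\ast$ to $u_{\xi}$ and satisfying $\int_{\Omega}\operatorname{dist}(Du_{\nu}(x),E)\,dx\to0$.

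First I would exploit the approximation property of Definition~\ref{Approximation property} with $K_{\delta}=\operatorname{Rco}_{f}E_{\delta}$: for $\delta$ small the compact set $E_{\delta}$ satisfies $E_{\delta}\subset\operatorname{Rco}_{f}E_{\delta}\subset\operatorname{int}\operatorname{Rco}_{f}E$, one has $\rho(\delta):=\sup_{\eta\in E_{\delta}}\operatorname{dist}(\eta,E)\to0$, and since $\{\xi\}$ is a compact subset of the open set $\operatorname{int}\operatorname{Rco}_{f}E$ it is eventually absorbed by $\operatorname{Rco}_{f}E_{\delta}$, so that $\xi\in\operatorname{Rco}_{f}E_{\delta}$. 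The crucial point is that $E_{\delta}$ is compact, so I may apply Theorem~\ref{Rcofcharact} to $E_{\delta}$ and to a bounded open set $U\supset\operatorname{Rco}_{f}E_{\delta}$: for every $\varepsilon>0$ this yields weights and matrices $(\lambda_{i},\xi_{i})_{i=1}^{I}$, with $\lambda_{i}>0$, $\sum_{i}\lambda_{i}=1$, enjoying property $(H_{I}(U))$ of Definition~\ref{defH_I}, with $\xi=\sum_{i}\lambda_{i}\xi_{i}$ and $\sum_{\xi_{i}\notin B_{\varepsilon}(E_{\delta})}\lambda_{i}<\varepsilon$.

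The core step is to turn this decomposition into an admissible gradient. Property $(H_{I}(U))$ is exactly the lamination structure that permits the standard iterated rank-one construction underlying Theorem~\ref{Relaxation property theorem for RcoE}: it produces a piecewise affine $u$ agreeing with $u_{\xi}$ on $\partial\Omega$, with $Du\in U$ almost everywhere, gradient essentially equidistributed as $|\{Du=\xi_{i}\}|\simeq\lambda_{i}|\Omega|$, and $\norm{u-u_{\xi}}_{L^{\infty}}$ as small as I wish. Because $E_{\delta}$ is a compact subset of the open set $\operatorname{int}\operatorname{Rco}_{f}E$, for $\varepsilon$ small one has $B_{\varepsilon}(E_{\delta})\subset\operatorname{int}\operatorname{Rco}_{f}E$, while the remaining matrices already lie in $\operatorname{Rco}_{f}E_{\delta}\subset\operatorname{int}\operatorname{Rco}_{f}E$; hence every gradient value belongs to $\operatorname{int}\operatorname{Rco}_{f}E\subset E\cup\operatorname{int}\operatorname{Rco}_{f}E$. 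For the distance term I would split the integral according to whether $\xi_{i}\in B_{\varepsilon}(E_{\delta})$ or not, obtaining $\int_{\Omega}\operatorname{dist}(Du,E)\,dx\le|\Omega|\,(\varepsilon+\rho(\delta))+\varepsilon\,\operatorname{diam}(U)\,|\Omega|$. A diagonal choice $\varepsilon=\varepsilon_{\nu}\to0$, $\delta=\delta_{\nu}\to0$ then delivers the sequence, the affine boundary values and the weak-$\ast$ convergence being automatic from the smallness of $\norm{u_{\nu}-u_{\xi}}_{L^{\infty}}$ together with the uniform gradient bound coming from $Du_{\nu}\in U$.

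The main obstacle is precisely this realization step: converting an abstract $(H_{I}(U))$-decomposition into a genuine $W^{1,\infty}$ map with prescribed gradient distribution, affine boundary data and controlled support. I would handle it through the lamination lemma encoded in Definition~\ref{defH_I}, taking care that the interiority $\operatorname{Rco}_{f}E_{\delta}\subset\operatorname{int}\operatorname{Rco}_{f}E$ guarantees that no intermediate gradient produced by the successive rank-one splittings ever leaves the admissible set. Once this is secured, the passage from the compact levels $E_{\delta}$ to $E$ is only a matter of letting $\delta\to0$, exactly as in the proof of Theorem~\ref{Relaxation property theorem for RcoE}; the sole genuinely new ingredient is that the characterization~\eqref{Rcofcharactintro} supplies, for the compact sets $E_{\delta}$, the same finite-order lamination data for the larger hull $\operatorname{Rco}_{f}$ that the classical argument used for $\operatorname{Rco}$.
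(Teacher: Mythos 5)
Your strategy is, at bottom, the paper's own: the paper proves this statement (its Corollary \ref{corol1}) by feeding the approximating sets $\widetilde{E}_\delta=B_\delta(E_\delta)$ and $\widetilde{K}_\delta=\operatorname*{Rco}\nolimits_{f}E_\delta$ into Theorem \ref{sufficientconditionforrelaxation}, whose proof is exactly your chain: use the approximation property to absorb $\xi$ into $\operatorname*{Rco}\nolimits_{f}E_\delta$, apply Theorem \ref{Rcofcharact} to the \emph{compact} set $E_\delta$ to get an $(H_I(U))$ decomposition with most mass near $E_\delta$, realize that decomposition as a piecewise affine map via the lamination construction (Lemma \ref{lemma}, built on the M\"uller--Sychev Lemma \ref{key-lemma}), estimate $\int_\Omega\mathrm{dist}(Du_\nu;E)\,dx$, and diagonalize in $\varepsilon$ and $\delta$. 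So you have not found a different route; you have inlined the paper's two-step argument into one.

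There is, however, one concrete flaw: the set $U$. You apply Theorem \ref{Rcofcharact} with ``a bounded open set $U\supset\operatorname*{Rco}\nolimits_{f}E_\delta$'' and then claim that the atoms $\xi_i\notin B_\varepsilon(E_\delta)$ ``already lie in $\operatorname*{Rco}\nolimits_{f}E_\delta\subset\operatorname*{int}\operatorname*{Rco}\nolimits_{f}E$.'' That claim is false: property $(H_I(U))$ only guarantees that all atoms $\xi_i$, and all intermediate barycenters of the rank-one chain, lie in $U$; the small-mass atoms may sit anywhere in $U$, far from $\operatorname*{Rco}\nolimits_{f}E_\delta$, and the realization lemma likewise produces transition gradients spread through $U\cup B_\varepsilon(\xi)$, not only at the atoms. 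Hence, for an arbitrary such $U$, nothing prevents $Du_\nu$ from leaving $E\cup\operatorname*{int}\operatorname*{Rco}\nolimits_{f}E$, and the inclusion constraint in Definition \ref{prop-relax} fails; your closing remark that the interiority $\operatorname*{Rco}\nolimits_{f}E_\delta\subset\operatorname*{int}\operatorname*{Rco}\nolimits_{f}E$ prevents this does not help, because that inclusion says nothing about the rest of $U$. The repair is immediate and is precisely the paper's choice: take $U=\operatorname*{int}\operatorname*{Rco}\nolimits_{f}E$, which is open, bounded (since $E$ is bounded, $\operatorname*{Rco}\nolimits_{f}E\subseteq\overline{\operatorname*{co}E}$), and contains $\operatorname*{Rco}\nolimits_{f}E_\delta$ by hypothesis (i) of Definition \ref{Approximation property}, so Theorem \ref{Rcofcharact} applies with this $U$ and every gradient produced by the construction automatically stays in $\operatorname*{int}\operatorname*{Rco}\nolimits_{f}E\cup B_\varepsilon(\xi)\subseteq E\cup\operatorname*{int}\operatorname*{Rco}\nolimits_{f}E$ for $\varepsilon$ small. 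With that single correction, your argument closes.
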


From this theorem and from the Baire categories method it follows that, to ensure existence of solutions to
(\ref{general differential inclusion}) under the approximation property assumption,
condition (\ref{generalsufficientcondition}) can be replaced by
\begin{equation}\label{moregeneralsufficientcondition}
D\varphi(x)\in E\cup \operatorname*{int}\operatorname*{Rco}\nolimits_{f}E,\ a.e.\ x\in\Omega.
\end{equation}
Based on the characterization of the elements of $\operatorname*{Rco}\nolimits_{f}E$ given in
(\ref{Rcofcharactintro}) we will prove in
Theorem \ref{sufficientconditionforrelaxation} a more general sufficient condition for the
relaxation property which allows us to work with subsets of $\operatorname*{Rco}\nolimits_{f}E$.
This is very useful in the applications because many times the entire hull
$\operatorname*{Rco}\nolimits_{f}E$ is not known.
Moreover, characterizing $\operatorname*{Rco}\nolimits_{f}E$ (or $\operatorname*{Rco}E$) may lead to
complicated formulas and thus checking condition (\ref{moregeneralsufficientcondition})
(or (\ref{generalsufficientcondition})) becomes very difficult.
However, many problems can still be solved provided it is possible to work with convenient
subsets of $\operatorname*{Rco}\nolimits_{f}E$. This will be the case in the several applications
given in Section \ref{sectionexamples}.

We start, in Section \ref{KirchheimExample}, with the problem already solved by Kirchheim
\cite{Kirchheim} on the existence
of a non affine map with a finite number of gradients whose values are not rank one connected
and with an affine boundary condition. We will show that this example is still in the setting of the
Baire categories method thanks to the sufficient
conditions for the relaxation property proved in Section \ref{sectionsufficientconditions}.
In this case the set $E$ is a
finite set of matrices with no rank one connections. We observe that we don't need  to compute
$\operatorname*{Rco}\nolimits_{f}E$ and that we get existence of solutions whenever the affine
boundary data $\varphi$ satisfies $D\varphi\in K$, for a certain set
$K\subset \operatorname*{Rco}\nolimits_{f}E$.

Then we will come back to the problem, already considered by Croce \cite{Croce}, of
arbitrary compact isotropic subsets
$E$ of $\mathbb{R}^{2\times 2}$. Once again, our theory shows here its versatility. For
this type of sets the hull $\operatorname*{Rco}\nolimits_{f}E$ was characterized by Cardaliaguet and Tahraoui
\cite{CT}. Although it was proved in \cite{Croce} that this hull coincides with $\operatorname*{Rco}E$,
we are now able to apply the Baire categories method without using this information.

Finally we consider, in Section \ref{Differential inclusions for some SO(n) invariant sets}, the case
of sets $E$ for which a
constraint on the sign of the determinant is imposed on a set of isotropic matrices:
\begin{equation}\label{isotropic with restriction intro}
E=\left\{\xi \in \mathbb{R}^{n\times n}: (\lambda_1(\xi),\cdots, \lambda_n(\xi))\in \Lambda_E,\
\det\xi> 0\right\},
\end{equation}
where $\Lambda_E$ is a set contained in
$\{(x_1, \cdots,x_n)\in \mathbb{R}^n: 0< x_1 \leq \cdots \leq x_n\}$ and
$0 \leq \lambda_1(\xi) \leq \cdots \leq \lambda_n(\xi)$ are the singular values
of the matrix $\xi$ (cf. Section \ref{sectionexamples}).
Characterizing the hulls of such sets is quite
complicated and the only results available were obtained by Cardaliaguet and Tahraoui \cite{CT2}
in dimension $n=2$.
Considering a particular class of sets $E$ we will prove the following result
(cf. Theorem \ref{existence isotropic with determinant constraint two points}).

\begin{theorem}
Let
$
E=\{\xi \in \mathbb{R}^{2\times 2}: (\lambda_1(\xi),\lambda_2(\xi))\in \{(a_1,b_1),(a_2,b_2)\},\ \det \xi> 0\}
$
with $0<a_1<b_1<a_2<b_2$.
Let $\Omega\subset \mathbb{R}^2$ be a bounded open set and let
$\varphi \in C^1_{piec}(\overline{\Omega}, \mathbb{R}^2)$ be such that
$D \varphi \in E \cup \mathrm{int}\operatorname*{Rco}_fE$ a.e. in $\Omega$. Then there exists
a map $u\in \varphi+ W^{1,\infty}_0(\Omega, \mathbb{R}^2)$ such that
$Du(x) \in E$ for a.e.  $x$ in $\Omega$.
\end{theorem}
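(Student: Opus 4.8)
The plan is to deduce the result from the Baire categories machinery: once we know that $\operatorname*{int}\operatorname*{Rco}\nolimits_{f}E$ enjoys the relaxation property with respect to $E$, the hypothesis $D\varphi\in E\cup\operatorname*{int}\operatorname*{Rco}\nolimits_{f}E$ is exactly condition (\ref{moregeneralsufficientcondition}), and the abstract existence theorem (Theorem \ref{abstract existence theorem}, due to Dacorogna and Pisante \cite{Dacorogna-Pisante}) yields a (in fact dense) set of maps $u\in\varphi+W^{1,\infty}_0(\Omega,\mathbb{R}^2)$ with $Du\in E$ a.e. The piecewise $C^1$ regularity of $\varphi$ is not an obstruction here: it is precisely the class of boundary data accommodated by the abstract theorem, either directly or after partitioning $\Omega$ into the finitely many subdomains on which $\varphi$ is $C^1$ and treating separately the region where $D\varphi\in E$ (where one simply keeps $u=\varphi$) and the region where $D\varphi\in\operatorname*{int}\operatorname*{Rco}\nolimits_{f}E$. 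Thus the whole problem reduces to establishing the relaxation property.

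To obtain the relaxation property I would invoke Corollary \ref{corol1}: it suffices to check that $E$ and $\operatorname*{Rco}\nolimits_{f}E$ have the approximation property (Definition \ref{Approximation property}) with $K_\delta=\operatorname*{Rco}\nolimits_{f}E_\delta$ for a suitable family of compact sets $E_\delta$. Note first that $E$ is compact: for fixed singular values $(a,b)$ the constraint $\det\xi>0$ forces $\det\xi=ab$, so each component $\{\xi:\ (\lambda_1(\xi),\lambda_2(\xi))=(a_i,b_i),\ \det\xi>0\}$ is the compact orbit $\{R_1\operatorname{diag}(a_i,b_i)R_2:\ R_1,R_2\in SO(2)\}$, and $E$ is the union of the two. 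The natural candidate for the approximating family is obtained by moving the two prescribed pairs slightly inward, setting $E_\delta=\{\xi:\ (\lambda_1(\xi),\lambda_2(\xi))\in\{(a_1+\delta,b_1-\delta),(a_2+\delta,b_2-\delta)\},\ \det\xi>0\}$ for $\delta$ small; these are again compact, converge to $E$ as $\delta\to0$, and the condition $0<a_1<b_1<a_2<b_2$ guarantees that the ordering and positivity of the perturbed singular values, hence the two--component structure, is preserved.

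The main obstacle is the verification of the nesting conditions of the approximation property, namely $K_\delta=\operatorname*{Rco}\nolimits_{f}E_\delta\subset\operatorname*{int}\operatorname*{Rco}\nolimits_{f}E$ together with $\operatorname*{int}\operatorname*{Rco}\nolimits_{f}E\subset\bigcup_{\delta>0}K_\delta$. Both require controlling the rank one convex hull of these singular value sets under the determinant constraint, which is exactly the delicate computation carried out by Cardaliaguet and Tahraoui \cite{CT2} in dimension two; the laminate characterization of $\operatorname*{Rco}\nolimits_{f}E$ provided by Theorem \ref{Rcofcharact} is the tool that lets one pass from that structural description to the required inclusions, by realizing each interior matrix as the barycentre of a laminate whose atoms lie near the (possibly perturbed) pairs. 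Shrinking the gap between the singular values should push the corresponding hull strictly into the interior of the original one, giving the first inclusion, while the monotone exhaustion of the interior by the $K_\delta$ gives the second; the hypothesis $b_1<a_2$ keeps the two pieces separated, so that the rank one connections, and thus the laminates, have the simple structure needed for these monotonicity statements. Once these inclusions are established the approximation property holds, Corollary \ref{corol1} applies, and the existence statement follows as described above. Alternatively, one may bypass the full hull and apply the more flexible Theorem \ref{sufficientconditionforrelaxation} to a convenient explicit subset of $\operatorname*{Rco}\nolimits_{f}E$.
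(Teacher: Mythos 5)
Your overall strategy coincides with the paper's: establish the relaxation property for $\operatorname*{int}\operatorname*{Rco}\nolimits_{f}E$ via Corollary \ref{corol1} (compact approximating sets $E_\delta$ with $K_\delta=\operatorname*{Rco}\nolimits_{f}E_\delta$) and conclude with Theorem \ref{abstract existence theorem}; your remark that $C^1_{piec}$ data are admissible because the set $\operatorname*{int}\operatorname*{Rco}\nolimits_{f}E$ is open is also correct. The execution, however, has a genuine flaw. The one concrete construction you commit to fails: if $E_\delta$ is built from the singular value pairs $(a_1+\delta,b_1-\delta)$ and $(a_2+\delta,b_2-\delta)$, then every matrix $\xi$ in the second component of $E_\delta$ has $\det\xi=(a_2+\delta)(b_2-\delta)=a_2b_2+\delta(b_2-a_2)-\delta^2>a_2b_2$ whenever $0<\delta<b_2-a_2$. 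Since $\xi\mapsto\det\xi-a_2b_2$ is polyconvex, hence rank one convex, and nonpositive on $E$, every element of $\operatorname*{Rco}\nolimits_{f}E$ satisfies $\det\xi\le a_2b_2$; therefore $E_\delta\not\subset\operatorname*{Rco}\nolimits_{f}E$, and the indispensable nesting $K_\delta=\operatorname*{Rco}\nolimits_{f}E_\delta\subset\operatorname*{int}\operatorname*{Rco}\nolimits_{f}E$ (condition (i) of Definition \ref{Approximation property}, the hypothesis of Corollary \ref{corol1}) is violated. Pushing the two singular values of a pair towards each other \emph{raises} their product, i.e.\ the determinant, above the admissible range: ``inward'' in the singular value plane is not inward for the hull. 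The paper avoids exactly this trap by perturbing only one coordinate of each extreme pair --- in its notation ($\Lambda_E=\{(a_1,a_2),(b_1,b_2)\}$ with $a_1<b_1<a_2<b_2$) it takes $\Lambda_{E_\delta}=\{(a_1+\delta,a_2),(b_1,b_2-\delta)\}$ --- so that the determinants of $E_\delta$ stay strictly inside the interval of determinants spanned by $E$.

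Second, the step you yourself label ``the main obstacle'' is precisely the content of the paper's proof, and you leave it as an assertion (``shrinking the gap \dots should push the corresponding hull strictly into the interior''). The paper must first derive an explicit description of $\operatorname*{Rco}\nolimits_{f}E$ and $\operatorname*{Rco}\nolimits_{f}E_\delta$: this is Proposition \ref{rcoftwopoints}, obtained from Cardaliaguet--Tahraoui's Theorem \ref{thmCT2} through Lemma \ref{lemma_equivalenza_inviluppi}, and it is a computation (the functions $\sigma_1,\sigma_2,\sigma_3$ have to be evaluated for this particular $\Lambda_E$), not a quotation. With those formulas in hand, the inclusion $\operatorname*{Rco}\nolimits_{f}E_\delta\subset\operatorname*{int}\operatorname*{Rco}\nolimits_{f}E$ reduces to comparing the two ``left boundary'' curves, which becomes inequality (\ref{parabola_retta}) --- a concave parabola versus a straight line --- checked at the endpoint $x_1=b_1$, where it holds precisely when $\delta<b_1-a_1$; the exhaustion condition then follows because the defining inequalities of $\operatorname*{Rco}\nolimits_{f}E_\delta$ converge to the strict inequalities describing the interior. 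None of this is supplied by the laminate characterization of Theorem \ref{Rcofcharact}, which enters only inside the proof of Corollary \ref{corol1}, and your alternative suggestion of applying Theorem \ref{sufficientconditionforrelaxation} to a subset of the hull is likewise undeveloped. In short: right framework, but the decisive verifications are missing, and the approximating sets you propose would make the argument break at its first step.
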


In addition, we are also able to establish sufficient conditions for sets $E$ of the form
(\ref{isotropic with restriction intro}) in dimension $n=2$ and $n=3$, working with a subset of
$\operatorname*{Rco}_fE$. In particular, in dimension 2, we prove the following result.

\begin{theorem}
Let $\Lambda_E$ be a subset of $\mathbb{R}^2$ containing the line segment joining two
distinct points $(a_1,a_2)$ and $(b_1,b_2)$ such that $0<a_1\le a_2$, $0<b_1\le b_2$, $a_1<b_1$,
$a_2<b_2$, and either $a_1<a_2$ or $b_1<b_2$. Let
$$E=\left\{\xi \in \mathbb{R}^{2\times 2}: (\lambda_1(\xi),\lambda_2(\xi))\in \Lambda_E,\
\det\xi> 0\right\},$$
$\Omega\subset \mathbb{R}^2$ be a bounded open set and
$\varphi \in C^1_{piec}(\overline{\Omega}, \mathbb{R}^2)$ be such that for a.e. $x$ in $\Omega$,
$$a_1a_2<\det D \varphi(x)<b_1b_2,$$
$$\lambda_2(D \varphi(x))< \lambda_1(D \varphi(x))\frac{b_2-a_2}{b_1-a_1}+\frac{a_2b_1-b_2a_1}{b_1-a_1}.$$
Then there exists
a map $u\in \varphi+ W^{1,\infty}_0(\Omega, \mathbb{R}^2)$ such that
$Du(x) \in E$  for a.e. $x$ in $\Omega$.
\end{theorem}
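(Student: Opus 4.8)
The plan is to produce an explicit open set $K\subset\mathbb{R}^{2\times2}$ which (i) contains $D\varphi(x)$ at a.e.\ $x$ by virtue of the two strict inequalities, and (ii) has the relaxation property with respect to $E$; the conclusion then follows at once from the Baire category existence theorem (Theorem~\ref{abstract existence theorem}), exactly as the sufficient condition (\ref{moregeneralsufficientcondition}) is meant to be used. I take
\[
K=\bigl\{\xi\in\mathbb{R}^{2\times2}:\ a_1a_2<\det\xi<b_1b_2,\ \ \lambda_2(\xi)<\tfrac{b_2-a_2}{b_1-a_1}\,\lambda_1(\xi)+\tfrac{a_2b_1-b_2a_1}{b_1-a_1}\bigr\}.
\]
The hypotheses on $\varphi$ say precisely that $D\varphi\in K$ a.e.\ (note that $\det D\varphi>a_1a_2>0$ and $\lambda_1\le\lambda_2$ hold automatically), so (i) is immediate and the whole problem reduces to establishing (ii).

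For (ii) I first reduce the target set. Writing $S_A=\{\xi:(\lambda_1(\xi),\lambda_2(\xi))=(a_1,a_2),\ \det\xi>0\}$ and $S_B$ for the analogous shell over $(b_1,b_2)$, the inclusion of the segment $[\,(a_1,a_2),(b_1,b_2)\,]$ in $\Lambda_E$ gives $S_A\cup S_B\subset E$. Since any map with $Du\in S_A\cup S_B$ also satisfies $Du\in E$, and since $S_A\cup S_B\subset E$ forces $\operatorname*{Rco}\nolimits_{f}(S_A\cup S_B)\subset\operatorname*{Rco}\nolimits_{f}E$, it suffices to prove that $K\subset\operatorname*{Rco}\nolimits_{f}(S_A\cup S_B)$ and that $K$ has the relaxation property with respect to $S_A\cup S_B$. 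Both are obtained from Theorem~\ref{sufficientconditionforrelaxation}: by the characterization (\ref{Rcofcharactintro}) of $\operatorname*{Rco}\nolimits_{f}$, it is enough to show that for every $\xi\in K$ and every $\varepsilon>0$ there exist weights $(\lambda_i,\xi_i)_{i=1}^{I}$ satisfying $(H_I(U))$ of Definition~\ref{defH_I}, with $\xi=\sum_i\lambda_i\xi_i$ and with total mass less than $\varepsilon$ carried by the leaves $\xi_i$ lying outside $B_\varepsilon(S_A\cup S_B)$.

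The construction is then carried out as follows. Because $S_A\cup S_B$ is invariant under $O(2)\times O(2)$ and so are all the hulls involved, I may diagonalise and assume $\xi=\operatorname{diag}(\lambda_1,\lambda_2)$ with $(\lambda_1,\lambda_2)\in K$. Since $\det$ is rank one affine, the barycenter of any laminate has determinant equal to the average of the determinants of its leaves; hence the bound $a_1a_2<\det\xi<b_1b_2$ fixes a weight $\theta\in(0,1)$ with $\det\xi=\theta\,a_1a_2+(1-\theta)\,b_1b_2$, which is the total mass eventually to be assigned to $S_A$ and $S_B$. The lamination itself is built from elementary rank one moves: two diagonal matrices $\operatorname{diag}(p,q)$ and $\operatorname{diag}(p',q')$ are rank one connected exactly when $p=p'$ or $q=q'$, so one may displace the singular values one coordinate at a time and, iterating, drive the leaves onto the two shells while respecting the determinant bookkeeping prescribed by $\theta$. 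The line through $(a_1,a_2)$ and $(b_1,b_2)$ is precisely the boundary of the region of $(\lambda_1,\lambda_2)$ reachable from these two shells by such determinant-constrained laminations, so the hypothesis $\lambda_2<\frac{b_2-a_2}{b_1-a_1}\lambda_1+\frac{a_2b_1-b_2a_1}{b_1-a_1}$ places $\xi$ strictly inside the reachable region and allows the mass off the shells to be made smaller than $\varepsilon$ by increasing the order of the laminate. This proves $\xi\in\operatorname*{Rco}\nolimits_{f}(S_A\cup S_B)$, and as $K$ is open, in fact $K\subset\operatorname*{int}\operatorname*{Rco}\nolimits_{f}(S_A\cup S_B)$.

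The step I expect to be the main obstacle is exactly this explicit lamination between the two singular value shells in the overlapping regime $a_2>b_1$, which is not covered by the separated two point Theorem~\ref{existence isotropic with determinant constraint two points}: one must realise a diagonal $\xi$ lying below the line as a genuine finite order laminate (a family satisfying $(H_I(U))$) whose leaves approach $S_A$ and $S_B$, simultaneously honouring the rank one connections and the affine determinant constraint, and one must verify that the line is the sharp threshold for solvability rather than some other curve. Once this geometric lemma is in place, the remaining ingredients — the reduction to diagonal form, the openness of $K$, the monotonicity in the target set, and the final invocation of the abstract existence theorem — are routine.
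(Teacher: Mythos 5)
Your outer skeleton (exhibit an open set $K$ containing $D\varphi$ a.e., prove it has the relaxation property with respect to $E$, then invoke Theorem \ref{abstract existence theorem}) matches the paper's, and your $K$ does coincide with the paper's set $K$ in Theorem \ref{existence isotropic with determinant constraint n=2}. But the core step fails: reducing the target from $E$ to the two endpoint shells $S_A\cup S_B$ discards exactly the information that makes the theorem true, and your central claim that $K\subset\operatorname*{Rco}_f(S_A\cup S_B)$, with the straight line through $(a_1,a_2)$ and $(b_1,b_2)$ as the sharp boundary of what is reachable by determinant-constrained laminates from the two shells, is false. In the regime $a_1<b_1<a_2<b_2$ (allowed by the hypotheses, and precisely your ``overlapping'' case) the paper's own Proposition \ref{rcoftwopoints} computes $\operatorname*{Rco}_f(S_A\cup S_B)$: for $\lambda_1\in[a_1,b_1]$ it is bounded above by the curve
$$x_2=\underline{\theta}+\frac{(\underline{\theta}-a_1)(\underline{\theta}-a_2)}{x_1-\underline{\theta}},\qquad \underline{\theta}=\frac{b_1b_2-a_1a_2}{b_1+b_2-a_1-a_2},$$
which passes through $(a_1,a_2)$ and $(b_1,b_2)$ and, since $b_1<\underline{\theta}<a_2$ (cf. (\ref{underlinetheta_position})), is strictly convex on $[a_1,b_1]$, hence lies strictly below its chord --- your straight line --- on $(a_1,b_1)$. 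Concretely, for $(a_1,a_2)=(1,3)$, $(b_1,b_2)=(2,4)$ the matrix $\mathrm{diag}(1.5,\,3.4)$ lies in $K$ (determinant $5.1\in(3,8)$ and $3.4<3.5$, the line value) but not in $\operatorname*{Rco}_f(S_A\cup S_B)$ (since $3.4>3.25$, the value of the curve above). So the ``geometric lemma'' you defer, and yourself flag as the main obstacle, cannot be proved: the rank-one affinity of the determinant plus the Cardaliaguet--Tahraoui bounds force the hyperbola, not the line, as the boundary of the two-shell hull.

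The paper's proof avoids cross-determinant lamination altogether by exploiting the whole segment $\Gamma\subset\Lambda_E$. Every $\xi\in K$ satisfies $\det\xi=\alpha_1\alpha_2$ for a unique $(\alpha_1,\alpha_2)\in\mathrm{rel\,int}\,\Gamma$ (the map $t\mapsto(a_1+t(b_1-a_1))(a_2+t(b_2-a_2))$ is strictly increasing) together with $\lambda_2(\xi)<\alpha_2$, and by the Dacorogna--Tanteri formula (\ref{tanteri}) this says precisely that $\xi\in\operatorname*{Rco}$ of the \emph{single} shell $\{\eta:(\lambda_1(\eta),\lambda_2(\eta))=(\alpha_1,\alpha_2),\ \det\eta>0\}$ --- a laminate lying entirely in one determinant level set and inside $K$. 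This is what makes condition $(iii)$ of Theorem \ref{sufficientconditionforrelaxation} checkable via (\ref{Rico}), while conditions $(i)$--$(ii)$ are verified with explicit approximating sets $\widetilde{E}_\delta$, $K_\delta$ obtained by sliding along the hyperbolas $x_1x_2=\alpha_1\alpha_2$. The straight line in the statement is the envelope of the single-shell constraints $\lambda_2<\alpha_2$ over this family of hyperbolas; it has nothing to do with laminating between the two endpoint shells. If you want to keep your structure, replace $S_A\cup S_B$ by $\widetilde{E}=\{\xi:(\lambda_1(\xi),\lambda_2(\xi))\in\Gamma,\ \det\xi>0\}$; with that target your plan becomes essentially the paper's proof.
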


We refer to Theorems \ref{existence isotropic with determinant constraint n=2} and
\ref{existence isotropic with determinant constraint n=3} for more details. We stress the fact
that these results are independent of the knowledge of $\operatorname*{Rco}_fE$, which is not
known for $n>2$, and that analogous results could be
obtained in higher dimensions. In practical applications the conditions stated are easier to verify
than the conditions needed to characterize $\operatorname*{Rco}_fE$.

\section{Review on the generalized notions of convexity}\label{sectionreview}

In this section we recall several definitions and properties of some generalized notions of convexity that
will be useful throughout this paper. We refer to Dacorogna's monograph \cite{DirectMethods} and to
Dacorogna and Ribeiro \cite{Dacorogna-Ribeiro} for more details.

Several types of hulls in a generalized sense will be recalled here. The main result of this section is the
characterization of the hull $\operatorname*{Rco}_f E$, established in Theorem \ref{Rcofcharact}.

We start by recalling the notions of polyconvex and rank one convex functions.

\begin{notation} For $\xi\in\mathbb{R}^{N\times n}$ we let
\[
T\left(  \xi\right)  =\left(
\xi,\mathrm{adj}_{2}\xi,\ldots,\mathrm{adj}_{N\wedge n}\xi\right)
\in\mathbb{R}^{\tau(N,n)},
\]
where $\mathrm{adj}_{s}\xi$ stands for the matrix of all $s\times
s$ subdeterminants of the matrix $\xi,$ $1\leq s\leq N\wedge
n=\min\left\{  N,n\right\}  $ and where
\[
\tau=\tau\left(  N,n\right)  =\underset{s=1}{\overset{N\wedge
n}{\sum}}\binom {N}{s}\binom{n}{s}
 \text{ and }\binom{N}{s}
 =\frac{N!}{s!\left(  N-s\right)  !}.
\]
In particular, if $N=n=2,$ then $T\left(  \xi\right)  =\left(
\xi,\det \xi\right).$
\end{notation}

\begin{definition}\label{convfunctions}
(i) A function $f:\mathbb{R}^{N\times
n}\rightarrow\mathbb{R}\cup\left\{  +\infty\right\}  $ is said to
be \emph{polyconvex} if there exists a convex function
$g:\mathbb{R}^{\tau(N,n)}\to\mathbb{R}\cup\left\{
+\infty\right\}$ such that
$f(\xi)=g(T(\xi)).$

(ii) A function $f:\mathbb{R}^{N\times
n}\rightarrow\mathbb{R}\cup\left\{  +\infty\right\}  $ is said to
be \emph{rank one convex} if
\[
f\left(  \lambda\xi+(1-\lambda)\eta\right)  \leq \lambda\,f\left(
\xi\right) +\left(  1-\lambda\right) \,f\left( \eta\right)
\]
for every $\lambda\in\left[  0,1\right] $ and every
$\xi,\eta\in\mathbb{R}^{N\times n}$ with
$\operatorname*{rank}(\xi-\eta)  =1$.\smallskip
\end{definition}

It is well known that
$f \, \textnormal{polyconvex} \Rightarrow
f \, \textnormal{rank one convex}.$

Next we recall the corresponding notions of convexity for sets.

\begin{definition}\label{definitiongeneralizedconvexities}
(i) We say that $E\subset\mathbb{R}^{N\times n}$ is
\emph{polyconvex} if there exists a convex set $K\subset \mathbb{R}^{\tau(N,n)}$ such that
$\left\{\xi\in\mathbb{R}^{N\times n}: T(\xi)\in K\right\}=E.$

(ii) Let $E\subset\mathbb{R}^{N\times n}$. We say that $E$ is
\emph{rank one convex} if for every $\lambda\in[0,1]$ and
for every $\xi,\eta\in E$ such that $\operatorname*{rank}(\xi-\eta)=1$, then
$\lambda \xi+(1-\lambda)\eta\in E.$
\end{definition}

As shown by Dacorogna and Ribeiro \cite{Dacorogna-Ribeiro} a set $E$ is polyconvex if and only if the
following condition is satisfied, for every $I \in \mathbb{N}$
$$\left.\begin{array}{c}\vspace{0.2cm}\displaystyle{\sum_{i=1}^{I}\lambda_i
T(\xi_i)=T\left(\sum_{i=1}^{I}\lambda_i \xi_i\right)}\\
\xi_i\in E,\ \lambda_i \geq 0, \ \displaystyle{\sum_{i=1}^{I}\lambda_i = 1}
\end{array}\right\}\Rightarrow \sum_{i=1}^{I}\lambda_i \xi_i\in
E.$$

Moreover, we have the following
implication
\begin{center}$E$ polyconvex$\ \Rightarrow$ $E$ rank one convex.\end{center}

As in the classical convex case, for these convexity notions, related convex hulls can be considered.

\begin{definition}\label{convexhulls}
The \emph{polyconvex} and \emph{rank one convex hulls} of a set
$E\subset\mathbb{R}^{N\times n}$ are, respectively, the smallest
polyconvex and rank one convex sets containing $E$ and are, respectively, denoted by
$\operatorname*{Pco}E$ and $\operatorname*{Rco}E$.
\end{definition}

Obviously one has the following inclusions
$$E\subseteq \operatorname*{Rco}E \subseteq \operatorname*{Pco}E\subseteq\operatorname*{co}E,$$ where
$\operatorname*{co}E$ denotes the convex hull of $E$.

We recall the usual characterizations for the polyconvex and rank one convex hulls.
It was proved by Dacorogna and Marcellini in \cite{Dac-Marc} that
\begin{equation}
\label{pco}\operatorname*{Pco}E =\left\{\xi\in\mathbb{R}^{N\times n}:\,
\displaystyle T(\xi)=\sum_{i=1}^{\tau+1}t_i T(\xi_i),
\ \xi_i\in E, \ t_i\ge 0, \ \sum_{i=1}^{\tau+1}t_i=1\right\}
\end{equation}
and \begin{equation}\label{Rico}\displaystyle{\operatorname*{Rco}E=
\bigcup_{i\in\mathbb{N}}\mathrm{R}_i\mathrm{co}E,}\end{equation} where
$\mathrm{R}_0\mathrm{co}E=E$ and
$$\mathrm{R}_{i+1}\mathrm{co}E=\left\{\xi\in\mathbb{R}^{N\times n}:\
\begin{array}
[c]{c}\vspace{0.2cm} \xi=\lambda A+(1-\lambda) B,\ \lambda\in[0,1],\\
A,B\in \mathrm{R}_i\mathrm{co}E,\ \operatorname*{rank}(A-B)\leq 1
\end{array}\right\},\ i\ge 0.$$

One has (see \cite{Dacorogna-Ribeiro}) that $\operatorname*{Pco}E$ and
$\operatorname*{Rco}E$ are open if $E$ is open, and $\operatorname*{Pco}E$ is
compact if $E$ is compact. However, in general, it isn't true that $\operatorname*{Rco}E$ is
compact if $E$ is compact (see Kol\'a\v{r} \cite{Kolar}).

It is well known that, for $E\subset\mathbb{R}^{N\times n}$,
\begin{equation}\label{coE}
\operatorname*{co}E =\left\{\xi\in\mathbb{R}^{N\times n}: f(\xi) \leq0,\ \text{for every convex function }
f\in{\mathcal{F}}^{E}_\infty\right\}\end{equation}
\begin{equation}\label{closurecoE}\overline{\operatorname*{co}E}  =\left\{ \xi\in\mathbb{R}^{N\times n}:
f(\xi)  \leq0,\ \text{for every convex function }f\in\mathcal{F}^{E}\right\}
\end{equation}
where $\overline{\operatorname*{co}E}$ denotes the closure of the
convex hull of $E$ and
\begin{align*}
{\mathcal{F}}^{E}_\infty  &  =\left\{  f:\mathbb{R}^{N\times n}
\rightarrow\mathbb{R}\cup\left\{ +\infty\right\}
:\left.  f\right\vert _{E}\leq0\right\} \\
\mathcal{F}^{E}  &  =\left\{  f:\mathbb{R}^{N\times n}\rightarrow
\mathbb{R}:\left.  f\right\vert _{E}\leq0\right\}  .
\end{align*}

Analogous representations to $(\ref{coE})$ can be obtained in the
polyconvex and rank one convex cases:
\begin{align*}
&\operatorname*{Pco}E  = \left\{ \xi\in\mathbb{R}^{N\times n}:
f(\xi) \leq0,\ \text{for every polyconvex function }
f\in{\mathcal{F}}^{E}_\infty\right\}, \\
&\operatorname*{Rco}E =\left\{ \xi\in\mathbb{R}^{N\times n}:
f(\xi) \leq0,\ \text{for every rank one convex function }
f\in{\mathcal{F}}^{E}_\infty\right\}.
\end{align*}
However,
$(\ref{closurecoE})$ can only be generalized to the polyconvex
case if the sets are compact, and, in the rank one convex case,
$(\ref{closurecoE})$ is not true, even if compact sets are
considered.
In view of this, another type of hulls can be defined.

\begin{definition}\label{hullsfinite}
For a set $E$ of $\mathbb{R}^{N\times n}$, let
\begin{align*}
\operatorname*{co}\nolimits_{f}E  & =\left\{
\xi\in\mathbb{R}^{N\times n}:f\left(
\xi\right)  \leq0\text{, for every convex }f\in\mathcal{F}^{E}\right\}\\
\operatorname*{Pco}\nolimits_{f}E  &  =\left\{
\xi\in\mathbb{R}^{N\times n}:f\left(
\xi\right)  \leq0\text{, for every polyconvex }f\in\mathcal{F}^{E}\right\} \\
\operatorname*{Rco}\nolimits_{f}E  &  =\left\{
\xi\in\mathbb{R}^{N\times n}:f\left( \xi\right)  \leq0\text{, for
every rank one convex
}f\in\mathcal{F}^{E}\right\}.
\end{align*}
\end{definition}

\begin{remark}
1) Notice that these hulls are closed sets.
Moreover, they are, respectively, convex, polyconvex and rank one convex.

2) For compact sets $E$, these are the hulls considered by M\"uller and {\v{S}}ver{\'a}k
\cite{Muller-Sverakcounterexamples} to establish an existence result for differential inclusions. We notice
that a different
definition was introduced for open sets.
\end{remark}

Thus, as observed above, $\overline{\operatorname*{co}E}=\operatorname*{co}\nolimits_{f}E;$
if $E$ is compact, then
$$\operatorname*{Pco}E=\overline{\operatorname*{Pco}E}=\operatorname*{Pco}\nolimits_{f}E,$$ but, in
general,
$$
\operatorname*{Pco}E\subsetneq\overline{\operatorname*{Pco}E}
\subsetneq \operatorname*{Pco}\nolimits_{f}E.
$$
Moreover, in general, even if $E$ is compact,
$$\operatorname*{Rco}E\subsetneq\overline{\operatorname*{Rco}E}
\subsetneq \operatorname*{Rco}\nolimits_{f}E.$$

Next we establish a characterization of the hull $\operatorname*{Rco}\nolimits_{f} E$ for a given compact set
$E$. Based on the following result, we will investigate in Section \ref{sectionsufficientconditions}
sufficient conditions for the relaxation property (cf. Definition \ref{prop-relax}) which is the key to apply
the Baire categories method for vectorial differential inclusions due to Dacorogna and Marcellini \cite{Dac-Marc}.

Before stating the result we give a definition.

\begin{definition} \label{defH_I}
Let $U$ be a subset of $\mathbb{R}^{N\times n}$ and, for some integer $I\ge 1$, let $\xi_i\in
\mathbb{R}^{N\times n}$ and $\lambda_i>0,\ i=1,...,I$ be such that $\sum_{i=1}^I\lambda_i=1$. We say that
$(\lambda_i,\xi_i)_{1\le i\le I}$ satisfy $(H_I(U))$ if  \smallskip

(i) in the case $I=1$, $\xi_1\in U$;\smallskip

(ii) in the case $I=2$, $\xi_1,\xi_2\in U$ and $\operatorname*{rank}(\xi_1-\xi_2)=1$;\smallskip

(iii) in the case $I>2$, up to a permutation, $\xi_1,\xi_2\in U$, $\operatorname*{rank}(\xi_1-\xi_2)=1$ and
defining
$$\left\{\begin{array}{ll}\mu_1=\lambda_1+\lambda_2, &
\eta_1=\frac{\lambda_1\xi_1+\lambda_2\xi_2}{\lambda_1+\lambda_2}\vspace{0.2cm}\\
\mu_i=\lambda_{i+1}, & \eta_i=\xi_{i+1},\ 2\le i\le I-1
\end{array}\right.$$
then $(\mu_i,\eta_i)_{1\le i\le I-1}$ satisfy $(H_{I-1}(U))$.
\end{definition}

\begin{remark}
The property in the above definition was introduced in \cite[page 174]{DirectMethods}, but here we have
the additional condition that the vertices of the ``chain'' must be elements of a given set. Moreover, we
notice that in the above definition, in particular, all $\xi_i\in U$.
\end{remark}

\begin{theorem}\label{Rcofcharact}
Let $E\subset\mathbb{R}^{N\times n}$ be a compact set and let $U$ be an open and bounded subset of
$\mathbb{R}^{N\times n}$
containing $\operatorname*{Rco}_f E$. Then
\begin{equation}\label{rcof_with_property}
\mathrm{Rco}_f E=\left\{\begin{array}{l}\xi\in\mathbb{R}^{N\times n}:\ \forall\ \varepsilon>0\ \exists\
I\in\mathbb{N},\ \exists\
 (\lambda_i,\xi_i)_{i=1,...,I} \text{ with } \lambda_i>0,\vspace{0.2cm}\\  \displaystyle{
 \sum_{i=1}^I\lambda_i=1} \text{ satisfying } (H_I(U)),\ \xi=\sum_{i=1}^I\lambda_i\xi_i,\
\sum_{\substack{i=1\\\xi_i\notin
B_\varepsilon(E)}}^I\lambda_i<\varepsilon\end{array} \right\},
\end{equation}
where $B_\varepsilon(E)=\{\xi\in \mathbb{R}^{N\times n}:\ \mathrm{dist}(\xi;E)<\varepsilon\}$.
\end{theorem}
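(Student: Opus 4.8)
The plan is to prove the two inclusions separately; denote by $R$ the set on the right-hand side of (\ref{rcof_with_property}). For the inclusion $R\subseteq\operatorname*{Rco}_f E$ I would first establish a Jensen-type inequality adapted to $(H_I(U))$: if $(\lambda_i,\xi_i)_{1\le i\le I}$ satisfy $(H_I(U))$ and $f$ is rank one convex, then $f\big(\sum_i\lambda_i\xi_i\big)\le\sum_i\lambda_i f(\xi_i)$. This follows by induction on $I$ directly from the recursive definition: the cases $I\le 2$ are immediate from rank one convexity, and for $I>2$ one merges the rank-one-connected pair $\xi_1,\xi_2$ into $\eta_1$, applies rank one convexity to that pair (so that $\mu_1 f(\eta_1)\le\lambda_1 f(\xi_1)+\lambda_2 f(\xi_2)$), and invokes the inductive hypothesis on $(\mu_i,\eta_i)_{1\le i\le I-1}$, whose barycenter is again $\xi$. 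Granting this, fix $\xi\in R$ and an arbitrary rank one convex $f\in\mathcal{F}^E$; being finite-valued and rank one convex, $f$ is continuous, hence bounded on the compact set $\overline{U}$ by some $M$. Taking the family associated with $\xi$ and a given $\varepsilon$, I would split $\sum_i\lambda_i f(\xi_i)$ according to whether $\xi_i\in B_\varepsilon(E)$: on the atoms inside $B_\varepsilon(E)$, continuity of $f$ together with $f|_E\le 0$ gives a bound that tends to a nonpositive limit as $\varepsilon\to 0$, while the remaining atoms lie in $U$ (as noted in the remark following Definition \ref{defH_I}), carry total weight $<\varepsilon$, and satisfy $f\le M$, so their contribution is at most $M\varepsilon$. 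Letting $\varepsilon\to 0$ yields $f(\xi)\le 0$, and since $f$ was arbitrary, $\xi\in\operatorname*{Rco}_f E$.

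For the reverse inclusion $\operatorname*{Rco}_f E\subseteq R$, which is the substantial part, I would pass through the representation of the hull by laminates. Given $\xi\in\operatorname*{Rco}_f E$, the goal is to produce a homogeneous gradient Young measure (laminate) $\nu$ with barycenter $\bar\nu=\xi$ and $\operatorname{supp}\nu\subseteq E$. One direction, $\{\bar\nu:\nu\text{ laminate},\ \operatorname{supp}\nu\subseteq E\}\subseteq\operatorname*{Rco}_f E$, is immediate from Jensen's inequality $f(\bar\nu)\le\langle\nu,f\rangle$; the content needed here is the converse, namely that for compact $E$ every point of $\operatorname*{Rco}_f E$ is realized as such a barycenter. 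This is precisely the characterization recovered from Pedregal \cite{Pedregal} and from M\"uller and {\v{S}}ver{\'a}k \cite{Muller-Sverakcounterexamples}.

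I would then approximate $\nu$ weak-$*$ by finite laminates (prelaminates). Constructing these by successive rank-one splitting starting from $\delta_\xi$ automatically preserves the barycenter at every stage, so each resulting family $(\lambda_i,\xi_i)$ has barycenter exactly $\xi$ and satisfies $(H_I(U))$, provided every vertex created stays in $U$; this confinement is possible because $\operatorname{supp}\nu\subseteq E\subseteq\operatorname*{Rco}_f E\subseteq U$ with $U$ open, leaving room for the intermediate merged vertices. Writing $\nu_k=\sum_i\lambda_i^k\delta_{\xi_i^k}\rightharpoonup\nu$ with $\operatorname{supp}\nu\subseteq E$, the Portmanteau theorem applied to the closed set $\{\eta:\operatorname{dist}(\eta;E)\ge\varepsilon\}$ gives $\limsup_k\nu_k(\{\operatorname{dist}(\cdot;E)\ge\varepsilon\})\le\nu(\{\operatorname{dist}(\cdot;E)\ge\varepsilon\})=0$, so for $k$ large the weight carried by the atoms outside $B_\varepsilon(E)$ is $<\varepsilon$. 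Such a $\nu_k$ is exactly a witness that $\xi\in R$.

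The main obstacle is this reverse inclusion, and within it the two structural facts about laminates: first, that every $\xi\in\operatorname*{Rco}_f E$ is the barycenter of a laminate supported on the compact set $E$ (the surjectivity of the barycenter map onto the hull, which is the genuinely deep ingredient and where \cite{Pedregal} and \cite{Muller-Sverakcounterexamples} enter); and second, the prelaminate approximation keeping all vertices inside the prescribed open set $U$ while preserving the exact barycenter $\xi$. The latter is technical but delicate, since the simultaneous control of the escaping mass and the confinement of the auxiliary vertices to $U$ are exactly what the quantitative formulation in (\ref{rcof_with_property}) is designed to encode.
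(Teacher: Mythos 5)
Your proof of the first inclusion (right-hand side contained in $\operatorname{Rco}_fE$) coincides with the paper's own argument: the induction giving the Jensen-type inequality for $(H_I(U))$ (left implicit in the paper), the splitting of $\sum_i\lambda_if(\xi_i)$ according to whether $\xi_i\in B_\varepsilon(E)$, and the uniform continuity of $f$ on $\overline{U}$. This half is correct and essentially identical to the paper's.

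For the hard inclusion $\operatorname{Rco}_fE\subseteq R$ you take a genuinely different route. You reduce it to two laminate-theoretic facts: (a) every $\xi\in\operatorname{Rco}_fE$ is the barycenter of a laminate supported on $E$ (note that what you need is a laminate, i.e.\ Jensen's inequality against rank one convex test functions, not a general homogeneous gradient Young measure), and (b) such a laminate can be approximated weak-$*$ by laminates of finite order with the same barycenter and with the whole splitting chain confined to $U$; the Portmanteau step that converts weak-$*$ convergence into the mass condition is correct. This is precisely the Young-measure proof that Remark \ref{remark Rco characterization} attributes to \cite{Pedregal} and to \cite[Theorem 2.1]{Muller-Sverakcounterexamples}, so your argument is legitimate modulo those citations; be aware, however, that step (b) is not the routine consequence of ``$U$ open leaves room for the merged vertices'' that your sketch suggests -- confining every intermediate vertex to an \emph{arbitrary} open $U\supset\operatorname{Rco}_fE$ (the statement must hold for all such $U$, and small $U$ is the hard case) while keeping the barycenter exact and the escaping mass small is the entire content of M\"uller--\v{S}ver\'ak's Theorem 2.1, so in effect you are quoting a result essentially equivalent to the one being proved. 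The paper instead gives a self-contained argument avoiding laminates: assuming $\xi\in\operatorname{Rco}_fE$ is not in the right-hand side, it defines $g(\eta)$ as the infimum of $\sum_i\mu_i\operatorname{dist}(\eta_i;E)$ over decompositions of $\eta$ satisfying $(H_I(U))$ (a rank one convex envelope of the distance function relative to $U$), checks that $g$ vanishes on $E$, is rank one convex on $U$ in the M\"uller--\v{S}ver\'ak sense, and satisfies $g(\xi)\ge\varepsilon^2>0$ under the contradiction hypothesis, and then invokes only the extension result \cite[Lemma 2.3]{Muller-Sverakcounterexamples} to produce a finite rank one convex $G\in\mathcal{F}^E$ with $G(\xi)>0$, contradicting $\xi\in\operatorname{Rco}_fE$. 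What each approach buys: yours is conceptually transparent and reuses standard machinery, but outsources the analytic depth to Theorem 2.1 (whose own proof rests on Lemma 2.3 plus Hahn--Banach separation); the paper's needs only the much lighter extension lemma, yields the quantitative statement directly, and is exactly what realizes the promise in the remark of a proof ``without mentioning Young measures''.
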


\begin{remark}\label{remark Rco characterization}
1) The fact that $\operatorname*{Rco}_f E$ is included in the set on the right hand side of (\ref{rcof_with_property}) was obtained
by means of Young measures by M\"uller and {\v{S}}ver{\'a}k
\cite[Theorem 2.1]{Muller-Sverakcounterexamples} as a
refinement of a result due to Pedregal \cite{Pedregal}. Below we recall the proof without mentioning Young
measures.

2) Since, for compact sets $E$, $\operatorname*{Rco}_f E\subseteq\operatorname*{co} E$, the set $U$ can be
chosen to be any convex open set containing $E$.

3) This result should be compared with the following characterization of $\operatorname*{Rco}E$ which follows
trivially from (\ref{Rico}): for any set $E\subset\mathbb{R}^{N\times n}$,
$$\mathrm{Rco}E=\left\{\begin{array}{l}\xi\in\mathbb{R}^{N\times n}:\ \exists\ I\in\mathbb{N},\ \exists\
 (\lambda_i,\xi_i)_{i=1,...,I} \text{ with } \lambda_i>0,\ \displaystyle{
 \sum_{i=1}^I\lambda_i=1}\vspace{0.2cm}\\  \text{ satisfying } (H_I(\operatorname*{Rco}E)),\
\displaystyle{\xi=\sum_{i=1}^I\lambda_i\xi_i,\ \
\xi_i\in E,\ \forall\ i=1,...,I}\end{array} \right\}.$$
\end{remark}

\begin{proof}
Let us call $X$ the set on the right hand side of the identity to be proved.

First we show that $X\subseteq \operatorname*{Rco}_f E$. Let $\xi\in X$ and let
$f:\mathbb{R}^{N\times n}\to\mathbb{R}$
be any rank one convex function such that $f_{|E}\le 0$. We will show that $f(\xi)\le 0$
by verifying that $f(\xi)\le \delta$ for any $\delta>0$.

We start by noticing that, since $f$ is continuous, it is uniformly continuous in $\overline{U}$.
Thus, fix $\delta>0$ and let $\gamma>0$ be such that
$$\forall\ \eta_1,\eta_2\in \overline{U},\ |\eta_1-\eta_2|\le \gamma\ \Rightarrow\
|f(\eta_1)-f(\eta_2)|\le \delta.$$

Now, let $\varepsilon>0$ be such that $\varepsilon\le \min\{\delta,\gamma\}$ and
$B_\varepsilon(E)\subset \overline{U}$. By definition of $X$, let $I_\varepsilon\in\mathbb{N},\
(\lambda_i^\varepsilon,\xi_i^\varepsilon)_{i=1,...,I_\varepsilon}$ with $\lambda_i^\varepsilon>0$,
$\displaystyle\sum_{i=1}^{I_\varepsilon}\lambda_i^\varepsilon=1$ satisfying $(H_{I_\varepsilon}(U))$,  be
such that $\displaystyle{\xi=\sum_{i=1}^{I_\varepsilon}\lambda_i^\varepsilon\xi_i^\varepsilon,\
\sum_{\substack{i=1\\ \xi_i^\varepsilon\notin
B_\varepsilon(E)}}^{I_\varepsilon}\lambda_i^\varepsilon<\varepsilon}$. Then, using the rank one convexity of $f$, we have
$$f(\xi)=f\left(\sum_{i=1}^{I_\varepsilon}\lambda_i^\varepsilon\xi_i^\varepsilon\right)\le
\sum_{i=1}^{I_\varepsilon}\lambda_i^\varepsilon f(\xi_i^\varepsilon)=\sum_{\substack{i=1\\
\xi_i^\varepsilon\notin
B_\varepsilon(E)}}^{I_\varepsilon}\lambda_i^\varepsilon f(\xi_i^\varepsilon)+\sum_{\substack{i=1\\
\xi_i^\varepsilon\in
B_\varepsilon(E)}}^{I_\varepsilon}\lambda_i^\varepsilon f(\xi_i^\varepsilon).$$
We are going to estimate the last two sums. For the first one we have
$$\sum_{\substack{i=1\\ \xi_i^\varepsilon\notin
B_\varepsilon(E)}}^{I_\varepsilon}\lambda_i^\varepsilon f(\xi_i^\varepsilon)\le C \sum_{\substack{i=1\\
\xi_i^\varepsilon\notin
B_\varepsilon(E)}}^{I_\varepsilon}\lambda_i^\varepsilon\le C\delta,$$
where $C := \underset{\overline{U}}\max f$.
For the second sum we use the uniform continuity of $f$ in $\overline{U}$. Since
$\xi_i^\varepsilon\in B_\varepsilon(E)$, we can consider $\eta_i^\varepsilon\in E$ such that
$|\eta_i^\varepsilon-\xi_i^\varepsilon|<\varepsilon$. Then,
$$f(\xi_i^\varepsilon)\le f(\xi_i^\varepsilon)-f(\eta_i^\varepsilon)\le
|f(\xi_i^\varepsilon)-f(\eta_i^\varepsilon)|\le \delta.$$

We then conclude that $f(\xi)\le (1+C)\delta$. Since $\delta$ is arbitrarily small, we obtain $f(\xi)\le 0$, as we wanted.

We will now prove the other inclusion, $\operatorname*{Rco}_f E\subseteq X$.
We suppose by contradiction
that $\xi\in\operatorname*{Rco}_f E$ and $\xi\notin X$. Then, there exists $\varepsilon>0$ such that, for
every $I\in\mathbb{N}$ and for every $(\lambda_i,\xi_i)$ satisfying $(H_I(U))$ with
$\xi=\sum_{i=1}^I \lambda_i\xi_i$ we have
$$\sum_{\substack{i=1\\ \xi_i\notin B_\varepsilon(E)}}^{I}\lambda_i\ge\varepsilon.$$

Defining, for $\eta \in U$, $f(\eta):=\mathrm{dist}(\eta;E)$ and
$$g(\eta):=\inf\left\{\begin{array}{rl}\displaystyle\sum_{i=1}^I\mu_if(\eta_i):& I\in\mathbb{N},\
(\mu_i,\eta_i)_{i=1,...,I} \text{ with } \mu_i>0,\ \displaystyle{\sum_{i=1}^I\mu_i=1}  \vspace{0.2cm}\\ &
\text{satisfying }(H_I(U))\end{array} \right\},$$
it is trivial to see that $0\le g<+\infty$ and $g_{|E}= 0$. Moreover, for any $(\mu_i,\eta_i)_{i=1,...,I}$
as in the definition of $g(\xi)$, the contradiction assumption gives
$$\sum_{i=1}^I\mu_if(\eta_i)\ge \sum_{\substack{i=1\\ \eta_i\notin
B_\varepsilon(E)}}^I\mu_if(\eta_i)\ge \sum_{\substack{i=1\\ \eta_i\notin
B_\varepsilon(E)}}^I\mu_i\varepsilon\ge\varepsilon^2>0$$
and therefore $g(\xi)>0$. Finally, we show that $g$ is rank one convex on $U$ according to the
definition in \cite{Muller-Sverakcounterexamples}, that is, if $A,B\in U$ with
$\mathrm{rank}(A-B)=1$ and $\lambda A+(1-\lambda)B\in U,\ \forall\ \lambda\in(0,1)$ then
$g(\lambda A+(1-\lambda)B)\le \lambda g(A)+(1-\lambda)g(B)$.
To achieve this, it is enough to observe that if
$(\mu_i,A_i)_{i=1,...,I}$ and $(\gamma_i,B_i)_{i=1,...,J}$ are as in the definition of $g(A)$ and $g(B)$,
respectively, then $((\lambda\mu_i,A_i), ((1-\lambda)\gamma_i,B_i))$ satisfy the conditions in the
definition of $g(\lambda A+(1-\lambda)B)$.

These properties of $g$ allow us to apply the extension result \cite[Lemma 2.3]{Muller-Sverakcounterexamples}
which ensures that there exists a rank one convex function
$G:\mathbb{R}^{N\times n}\to \mathbb{R}$
coinciding with $g$ in a neighborhood of $\operatorname*{Rco}_f E$. This yields the desired contradiction,
since we are assuming that $\xi\in \operatorname*{Rco}_f E$.
\end{proof}

\section{Sufficient conditions for the relaxation property}\label{sectionsufficientconditions}

The Baire categories method, developed by Dacorogna and Marcellini \cite{Dac-Marc} for solving vectorial
differential
inclusions, relies on a fundamental property, called relaxation property, cf. Definition \ref{prop-relax}
below. Due to the difficulty in dealing with this property in the applications, sufficient conditions
for it were also obtained in \cite{Dac-Marc}. They ensure existence of solutions to
the differential inclusion boundary value problem when the gradient of the boundary data is in
the interior of the rank one convex hull of the set where the differential inclusion is to be solved. However,
in some examples this hull turns out to be too restrictive. Therefore, our goal in this section is to find
more flexible sufficient conditions for the relaxation property. This will allow us to handle the
problems considered in Section \ref{sectionexamples}.

We start by recalling the relaxation property introduced by Dacorogna and Marcellini \cite{Dac-Marc} and their
related existence theorem for differential inclusions, here
in a more general version due to Dacorogna and Pisante \cite{Dacorogna-Pisante}.

\begin{definition}[Relaxation Property]\label{prop-relax} Let $E$,
$K\subset\mathbb{R}^{N\times n }$. We say that $K$ has the relaxation property with respect to $E$ if, for
every bounded open set  $\Omega\subset\mathbb{R}^n$ and for every affine function $u_\xi$, such that
$Du_\xi(x)=\xi$ and
$Du_\xi(x)\in K,$ there exists a sequence $u_\nu\in Aff_{piec}(\overline{\Omega};\mathbb{R}^N)$ such that
$$\begin{array}{l} u_\nu\in
u_\xi+W_0^{1,\infty}(\Omega;\mathbb{R}^N),\quad
Du_\nu(x)\in E\cup K,\ a.e.\ x\text{ in } \Omega,\vspace{0.2cm}\\
u_\nu\stackrel{*}{\rightharpoonup}u_\xi\ \mbox{in}\
W^{1,\infty}(\Omega;\mathbb{R}^N),\quad
\displaystyle{\lim_{\nu\rightarrow+\infty}\int_\Omega
\mathrm{dist}(Du_\nu(x);E)\,dx =0.}
\end{array}$$
\end{definition}

\begin{theorem}\label{abstract existence theorem}
Let
$\Omega\subset\mathbb{R}^{n}$ be open and bounded. Let $E\subset
\mathbb{R}^{N\times n}$ and $K\subset
\mathbb{R}^{N\times n}$ be   compact and  bounded, respectively.
Assume that $K$ has the relaxation property with respect to $E$. Let
$\varphi\in Aff_{piec}\left(  \overline{\Omega};\mathbb{R}^{N}\right)  $ be
such that
\[
D\varphi\left(  x\right)  \in E\cup K\text{, a.e. } x \text{ in }\Omega.
\]
Then there exists (a dense set of) $u\in\varphi+W_{0}^{1,\infty}\left(
\Omega;\mathbb{R}^{N}\right)  $ such that
\[
Du\left(  x\right)  \in E\text{, a.e. } x \text{ in }\Omega.
\]
Moreover, if $K$ is open, $\varphi$ can be taken in $C^1_{piec}(\overline{\Omega}; \mathbb{R}^N).$
\end{theorem}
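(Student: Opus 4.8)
The plan is to prove Theorem \ref{abstract existence theorem} by the Baire category method of Dacorogna and Marcellini, in its complete-metric-space formulation, with the relaxation property entering at exactly one decisive step. First I would fix $R>0$ such that $|\xi|\le R$ for all $\xi\in E\cup K$, which is possible since $E$ is compact and $K$ is bounded, and introduce the set of piecewise affine subsolutions
$$\mathcal{A}=\left\{u\in Aff_{piec}(\overline{\Omega};\mathbb{R}^N): u\in\varphi+W_0^{1,\infty}(\Omega;\mathbb{R}^N),\ Du(x)\in E\cup K\ \text{a.e. in }\Omega\right\}.$$
The hypothesis $D\varphi\in E\cup K$ a.e. guarantees $\varphi\in\mathcal{A}$, so $\mathcal{A}\neq\emptyset$. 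Every element of $\mathcal{A}$ is $R$-Lipschitz and equals $\varphi$ on $\partial\Omega$, so its uniform closure $\overline{\mathcal{A}}\subset C^0(\overline{\Omega};\mathbb{R}^N)$ is a closed, bounded, equi-Lipschitz family; equipped with the $L^\infty$ metric it is a complete metric space, and this is precisely where compactness of $E$ and boundedness of $K$ are used. Each $u\in\overline{\mathcal{A}}$ still satisfies $u\in\varphi+W_0^{1,\infty}$ with $\|Du\|_{L^\infty}\le R$, and uniform convergence of equi-Lipschitz maps forces weak-$*$ convergence of their gradients in $L^\infty$.

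The solutions we seek form the set $S=\{u\in\overline{\mathcal{A}}: Du(x)\in E\ \text{a.e.}\}$; since $E$ is closed, this coincides with $\{u\in\overline{\mathcal{A}}: I(u)=0\}$ for the nonnegative defect functional
$$I(u)=\int_\Omega \mathrm{dist}(Du(x);E)\,dx.$$
The second step is to show that $I$ is of Baire class one on $\overline{\mathcal{A}}$, i.e. a pointwise limit of $L^\infty$-continuous functionals (obtained, for instance, by replacing $Du$ with difference quotients of $u$). The classical theorem on Baire-one functions then gives that the set of points of continuity of $I$ is a dense $G_\delta$, hence residual, in the complete metric space $\overline{\mathcal{A}}$. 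Note that neither semicontinuity of $I$ holds in general, since $\mathrm{dist}(\cdot;E)$ is neither convex nor concave, so the Baire-one framework is the natural one.

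The heart of the argument, and the step I expect to be the main obstacle, is to prove that every continuity point $u$ of $I$ satisfies $I(u)=0$. I would argue by contradiction: if $I(u)=\delta>0$, then on a set of positive measure $Du$ lies in $K$ at distance bounded away from $E$. Approximating $u$ uniformly by a piecewise affine $v\in\mathcal{A}$ and invoking the relaxation property of $K$ with respect to $E$ (Definition \ref{prop-relax}) on the cubes where $v$ is affine with $Dv=\xi\in K$, I would construct a piecewise affine $w\in\mathcal{A}$, arbitrarily close to $u$ in $L^\infty$, whose gradient is concentrated near $E$, so that $I(w)\le I(v)-c$ for some $c>0$ depending only on $\delta$ and $|\Omega|$. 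Since both such $v$ (with $I(v)$ near $I(u)$) and such $w$ (with $I(w)$ much smaller) lie in every $L^\infty$-neighbourhood of $u$, the values of $I$ cannot cluster at $I(u)>0$, contradicting continuity. Hence $I\equiv 0$ on the residual set of continuity points, so $S$ is residual; by Baire's theorem it is nonempty, and being residual it is dense in $\overline{\mathcal{A}}$, which yields the stated dense set of solutions.

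Finally, for the refinement when $K$ is open, I would reduce a boundary datum $\varphi\in C^1_{piec}(\overline{\Omega};\mathbb{R}^N)$ to the piecewise affine case already treated. On each piece where $\varphi$ is $C^1$, the openness of $K$ provides room around the values $D\varphi(x)\in K$, so $\varphi$ can be approximated uniformly by piecewise affine maps whose gradients still lie in $E\cup K$ a.e., while matching $\varphi$ on the interfaces and on $\partial\Omega$. Applying the affine case to the resulting piecewise affine boundary datum and then transferring the conclusion back by approximation produces the desired $u\in\varphi+W_0^{1,\infty}(\Omega;\mathbb{R}^N)$ with $Du\in E$ a.e.
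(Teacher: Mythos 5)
The paper itself gives no proof of Theorem \ref{abstract existence theorem}: it is quoted from Dacorogna and Pisante \cite{Dacorogna-Pisante} (see also \cite{Dac-Marc}), so your attempt can only be measured against the proof in those references. Your first three steps reproduce that proof faithfully: the complete metric space $\overline{\mathcal{A}}$ obtained as the uniform closure of the piecewise affine subsolutions, the defect functional $I(u)=\int_\Omega \mathrm{dist}(Du(x);E)\,dx$, which is of Baire class one (via difference quotients or mollified gradients, using the equi-Lipschitz bound) and hence has a residual set of continuity points, and the use of the relaxation property of Definition \ref{prop-relax} on the affine pieces of a nearby $v\in\mathcal{A}$ to show that continuity points are zeros of $I$. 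That part is sound in outline; note only that the relaxation property in fact produces $w\in\mathcal{A}$ arbitrarily close to $u$ with $I(w)$ arbitrarily small (not merely $I(w)\le I(v)-c$), which makes the contradiction with continuity at $u$ immediate.

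The genuine gap is in your final paragraph, the case where $K$ is open and $\varphi\in C^1_{piec}(\overline{\Omega};\mathbb{R}^N)$. You assert that $\varphi$ can be approximated uniformly, matching it on $\partial\Omega$, by piecewise affine maps ``whose gradients still lie in $E\cup K$ a.e.''. This fails in general on the set where $D\varphi(x)\in E\setminus K$: there the openness of $K$ provides no room at all, because $E$ is merely compact (typically with empty interior, as in the application of Section \ref{KirchheimExample}, where $E$ is a finite set of matrices), and nothing in the hypotheses places a neighbourhood of such a point inside $E\cup K$; the gradient of an affine interpolant of $\varphi$ is only \emph{close} to values of $D\varphi$, hence may leave $E\cup K$ entirely. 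The correct reduction treats the two regions differently: on $\Omega_2=\{x\in\Omega: D\varphi(x)\in E\setminus K\}$ one keeps $u=\varphi$ unchanged, since the inclusion is already solved there, and only on the open set $\Omega_1=\{x\in\Omega: D\varphi(x)\in K\}$ does one approximate. Moreover, even on $\Omega_1$ a single fine triangulation does not suffice, because $D\varphi$ may approach $\partial K$ near $\partial\Omega_1$; one covers $\Omega_1$, up to a null set, by countably many disjoint open sets on whose closures $D\varphi$ stays in a compact subset of $K$, approximates $\varphi$ there by piecewise affine maps with the same boundary values and gradients in $K$, applies the piecewise affine case already proved on each such set, and glues. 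Note finally that no ``transferring back by approximation'' is needed: since all modified maps agree with $\varphi$ on the relevant boundaries, the glued map already lies in $\varphi+W^{1,\infty}_0(\Omega;\mathbb{R}^N)$ and satisfies $Du\in E$ a.e.
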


A sufficient condition for the relaxation property is the approximation property \cite[Definition 6.12 and
Theorem 6.14]{Dac-Marc} (see also \cite[Definition 10.6 and Theorem 10.9]{DirectMethods}) that we recall next.

\begin{definition}
[Approximation property]\label{Approximation property}Let $E\subset K  \subset\mathbb{R}^{N\times n}.$ The
sets $E$ and $K$ are said to have the\textit{\ }approximation property if there exists a
family of closed sets $E_{\delta}$ and $K_\delta$,
$\delta>0$, such that\smallskip

(i) $E_{\delta}\subset K_\delta \subset\operatorname*{int}%
K$ for every $\delta>0;$\smallskip

(ii)  $\forall\ \varepsilon >0\ \exists\ \delta_0>0:\
\mathrm{dist}(\eta; E)\le\varepsilon,\ \forall\ \eta\in E_\delta,\
\delta\in(0,\delta_0]$;\smallskip

(iii) $\eta\in \mathrm{int}\,K\ \Rightarrow\ \exists\ \delta_0>0:\
\eta\in K_\delta,\ \forall\ \delta\in (0,\delta_0]$.
\end{definition}

We therefore have the following theorem.

\begin{theorem}
\label{Relaxation property theorem for RcoE}Let $E\subset\mathbb{R}^{N\times
n}$ be compact and assume $\operatorname*{Rco}E$ has the approximation property with
$K_\delta =\operatorname*{Rco}E_{\delta}$. Then
$\operatorname*{int}\operatorname*{Rco}E$ has the relaxation property with
respect to $E$.
\end{theorem}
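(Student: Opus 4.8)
The plan is to derive the relaxation property directly from the approximation property by an iterated lamination argument, in the spirit of Dacorogna and Marcellini \cite{Dac-Marc,DirectMethods}. Fix a bounded open set $\Omega$ and $\xi\in\operatorname*{int}\operatorname*{Rco}E$ with associated affine map $u_\xi$; for each $\nu$ I must produce $u_\nu$ as in Definition \ref{prop-relax} with error level $\varepsilon=1/\nu$. The first step is to use the approximation property to exhibit $\xi$ as a \emph{finite} laminate supported near $E$. Since $\xi\in\operatorname*{int}K=\operatorname*{int}\operatorname*{Rco}E$, property (iii) of Definition \ref{Approximation property} gives $\xi\in K_\delta=\operatorname*{Rco}E_\delta$ for all small $\delta$; I choose $\delta$ so small that property (ii) also forces $\mathrm{dist}(\eta;E)\le\varepsilon$ for every $\eta\in E_\delta$. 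Recalling $\operatorname*{Rco}E_\delta=\bigcup_i\mathrm{R}_i\mathrm{co}E_\delta$ from (\ref{Rico}), I obtain $\xi\in\mathrm{R}_m\mathrm{co}E_\delta$ for some finite $m$. Because $E$ is compact, $\operatorname*{Rco}E\subseteq\operatorname*{co}E$ is bounded, so the nested sets $\mathrm{R}_i\mathrm{co}E_\delta$, $0\le i\le m$, are all compact and, by property (i), contained in the open set $\operatorname*{int}\operatorname*{Rco}E$; hence there is a safety gap $\rho>0$ with $\mathrm{R}_m\mathrm{co}E_\delta+\overline{B_\rho(0)}\subset\operatorname*{int}\operatorname*{Rco}E$.

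The second step is the iterated lamination. The membership $\xi\in\mathrm{R}_m\mathrm{co}E_\delta$ exhibits $\xi$ as the root of a finite rank-one tree: $\xi=\lambda A+(1-\lambda)B$ with $\operatorname*{rank}(A-B)\le1$ and $A,B\in\mathrm{R}_{m-1}\mathrm{co}E_\delta$, each node splitting further until, after $m$ levels, the leaves lie in $E_\delta$. I would invoke the classical single-step approximation lemma underlying \cite[Theorem 6.14]{Dac-Marc}: given $\eta>0$, any bounded open $D$, and $\zeta=\mu P+(1-\mu)Q$ with $\operatorname*{rank}(P-Q)\le1$, there is a piecewise affine $w$ with $w=u_\zeta$ on $\partial D$, $\|w-u_\zeta\|_\infty\le\eta$, $Dw$ taking values within $\eta$ of $\{P,Q\}$, and the two gradient regions having measures within $\eta|D|$ of $\mu|D|$ and $(1-\mu)|D|$. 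Applying this lemma level by level down the tree — on each subregion where the current gradient sits near an internal node, re-splitting according to that node's rank-one decomposition — and choosing the per-level tolerance small enough that the accumulated perturbation is at most $\rho$ and the total bad measure is at most $\varepsilon|\Omega|$, I obtain a piecewise affine $u_\nu$ with $u_\nu=u_\xi$ on $\partial\Omega$ whose gradient stays within $\rho$ of $\mathrm{R}_m\mathrm{co}E_\delta$ everywhere and lies within $2\varepsilon$ of $E_\delta$ off a set of measure at most $\varepsilon|\Omega|$.

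The third step is to verify the four requirements of Definition \ref{prop-relax}. By construction $u_\nu-u_\xi\in W_0^{1,\infty}(\Omega;\mathbb{R}^N)$, and the safety-gap estimate gives $Du_\nu\in\operatorname*{int}\operatorname*{Rco}E=K\subseteq E\cup K$ a.e., so the inclusion is automatic. Since $\|u_\nu-u_\xi\|_\infty\to0$ and the gradients remain in a fixed bounded neighborhood of $\operatorname*{Rco}E$, the sequence is bounded in $W^{1,\infty}$ and converges uniformly, hence $u_\nu\stackrel{*}{\rightharpoonup}u_\xi$. For the crucial integral, on the good set (measure $\ge(1-\varepsilon)|\Omega|$) one has $\mathrm{dist}(Du_\nu;E)\le\mathrm{dist}(Du_\nu;E_\delta)+\sup_{E_\delta}\mathrm{dist}(\cdot;E)\le 3\varepsilon$ by property (ii), while on the bad set $\mathrm{dist}(Du_\nu;E)$ is bounded by the diameter of $\operatorname*{co}E$; thus $\int_\Omega\mathrm{dist}(Du_\nu;E)\,dx\le C\varepsilon|\Omega|\to0$. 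Letting $\varepsilon=1/\nu\to0$, with $\delta,m,\eta$ chosen accordingly for each $\nu$, yields the required sequence.

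The main obstacle is the bookkeeping in the iterated lamination of the second step: one must control simultaneously, over a tree of depth $m$ that grows as $\delta\to0$, that every gradient value produced — including the $\eta$-perturbations introduced by the single-step lemma at each node — remains inside the open hull $\operatorname*{int}\operatorname*{Rco}E$ (this is exactly where the positive gap $\rho$ and a geometrically decreasing per-level tolerance are needed), and that the cumulative measure of the region not yet driven into a neighborhood of $E_\delta$ stays below $\varepsilon|\Omega|$. The single-step lamination lemma itself is standard, but its careful iteration, together with the preservation of the exact boundary values and the uniform smallness of $\|u_\nu-u_\xi\|_\infty$ at every level, is the technical heart of the argument.
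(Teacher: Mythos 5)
Your proposal is correct and is essentially the paper's own approach: the paper recalls this theorem from Dacorogna--Marcellini, and its generalized version (Theorem \ref{sufficientconditionforrelaxation}, proved through Lemma \ref{lemma} and the M\"uller--Sychev Lemma \ref{key-lemma}) is exactly your argument, with the chain property $(H_I(U))$ playing the role of your finite rank-one tree inside $\operatorname*{Rco}E_\delta=\bigcup_i\mathrm{R}_i\mathrm{co}E_\delta$, and with the same measure and distance bookkeeping at the end. The only slip is that the single-step lemma leaves residual gradients near the barycenter $\zeta$ (hence near the rank-one segment $[P,Q]$), not within $\eta$ of the two-point set $\{P,Q\}$, but your safety gap $\rho$ and small-measure accounting absorb this without any change to the argument.
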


In the spirit of the approximation property, we establish a sufficient condition for the relaxation property
such that larger sets than the rank one convex hull of $E$ can be considered as the set $K$ in
Theorem \ref{abstract existence theorem}. More precisely, we will show that hulls like
$\operatorname*{Rco}_f E$ are likely to enjoy the relaxation property.

We can now state our main theorem.

\begin{theorem}\label{sufficientconditionforrelaxation}
Let $E, K$ be two bounded subsets of $\mathbb{R}^{N\times n}$ and let, for $\delta>0$, $E_\delta,K_\delta$ be
sets verifying the following conditions:\smallskip

(i) $\forall\ \varepsilon >0\ \exists\ \delta_0>0:\
\mathrm{dist}(\eta; E)\le\varepsilon,\ \forall\ \eta\in E_\delta,\
\delta\in(0,\delta_0]$;\smallskip

(ii) $\eta\in \mathrm{int}\,K\ \Rightarrow\ \exists\ \delta_0>0:\
\eta\in K_\delta,\ \forall\ \delta\in (0,\delta_0]$;

(iii) $\forall\ \delta>0\ \forall\ \xi\in K_\delta\ \exists\ I\in\mathbb{N},\ \exists\
(\lambda_i,\xi_i)_{1\le i\le I} \text{ with } \lambda_i>0,\     \displaystyle\sum_{i=1}^I\lambda_i=1,$

$\hspace{0.7cm}\xi_i\in\mathbb{R}^{N\times n}, \text{ satisfying } (H_I(\mathrm{int}\,K))\text{ and } \displaystyle{\xi=\sum_{i=1}^I\lambda_i\xi_i,\
\sum_{\substack{i=1\\ \xi_i\notin
E_\delta}}^I\lambda_i<\delta}.$

Then $\operatorname*{int}K$ has the relaxation property with respect to $E$.
\end{theorem}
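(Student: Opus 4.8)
The plan is to verify Definition \ref{prop-relax} directly. Fix a bounded open set $\Omega\subset\mathbb{R}^n$ and an affine map $u_\xi$ with $Du_\xi\equiv\xi\in\operatorname*{int}K$. It suffices to produce, for each prescribed accuracy $\eta>0$, a single $v\in Aff_{piec}(\overline{\Omega};\mathbb{R}^N)$ with $v-u_\xi\in W_0^{1,\infty}(\Omega;\mathbb{R}^N)$, $Dv\in\operatorname*{int}K$ a.e.\ (so that $Dv\in E\cup\operatorname*{int}K$), $\|v-u_\xi\|_{L^\infty}\le\eta$ and $\int_\Omega\operatorname{dist}(Dv;E)\,dx\le\eta$; the sequence is then obtained by taking $\eta=1/\nu$. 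The weak-$*$ convergence $u_\nu\stackrel{*}{\rightharpoonup}u_\xi$ in $W^{1,\infty}$ follows automatically, since $u_\nu\to u_\xi$ uniformly while the gradients $Du_\nu$ stay in the bounded set $\operatorname*{int}K$.

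First I would calibrate the parameters. Set $C:=\sup_{\zeta\in K}\operatorname{dist}(\zeta;E)<+\infty$, finite because $K$ is bounded. Given $\eta$, I use (i) to choose $\varepsilon>0$ and $\delta_0>0$ so small that $\operatorname{dist}(\zeta;E)\le\varepsilon$ for all $\zeta\in E_\delta$, $\delta\in(0,\delta_0]$, arranging moreover that $(\varepsilon+C\delta_0)|\Omega|\le\eta/2$. Since $\xi\in\operatorname*{int}K$, after possibly shrinking $\delta_0$, condition (ii) gives $\xi\in K_\delta$ for some fixed $\delta\in(0,\delta_0]$. Then (iii) provides an integer $I$ and weights $(\lambda_i,\xi_i)_{1\le i\le I}$ with $\lambda_i>0$, $\sum_i\lambda_i=1$, satisfying $(H_I(\operatorname*{int}K))$, such that $\xi=\sum_i\lambda_i\xi_i$ and $\sum_{\xi_i\notin E_\delta}\lambda_i<\delta$.

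Second I would realize this laminate by a piecewise affine map, invoking the standard piecewise-affine approximation lemma underlying the Baire category method (Dacorogna and Marcellini \cite{Dac-Marc}; see also \cite[Chapter 10]{DirectMethods}): from a decomposition $\xi=\sum_i\lambda_i\xi_i$ satisfying $(H_I(\operatorname*{int}K))$ one constructs $v\in Aff_{piec}$ with $v-u_\xi\in W_0^{1,\infty}(\Omega)$, $\|v-u_\xi\|_{L^\infty}\le\eta$, $Dv\in\operatorname*{int}K$ a.e., together with pairwise disjoint sets $\Omega_i\subset\Omega$ on which $|Dv-\xi_i|\le\rho$, with $|\Omega_i|\ge(\lambda_i-\eta')|\Omega|$ and $|\Omega\setminus\bigcup_i\Omega_i|$ arbitrarily small, where $\rho,\eta'$ are free small parameters. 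The recursive form of $(H_I(\operatorname*{int}K))$ in Definition \ref{defH_I} drives the induction: each step laminates a rank-one connected pair whose endpoints and resulting average all lie in the open set $\operatorname*{int}K$, so an elementary one-step lamination keeps the gradient inside $\operatorname*{int}K$, and the lower-order data $(\mu_i,\eta_i)$ feed the next step. Splitting $\Omega$ into the $\Omega_i$ and the exceptional set, and using that $\operatorname{dist}(\cdot;E)$ is $1$-Lipschitz together with the bound $C$, I obtain
$$\int_\Omega\operatorname{dist}(Dv;E)\,dx\le\sum_{\xi_i\in E_\delta}|\Omega_i|(\varepsilon+\rho)+\Big(\sum_{\xi_i\notin E_\delta}|\Omega_i|+\big|\Omega\setminus\textstyle\bigcup_i\Omega_i\big|\Big)C\le(\varepsilon+C\delta_0)|\Omega|+o(1),$$
the $o(1)$ collecting the contributions of $\rho$, $\eta'$ and the exceptional set; absorbing these into the remaining $\eta/2$ gives the bound $\le\eta$.

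The main obstacle is the second stage: ensuring the approximating gradients lie in $\operatorname*{int}K$ and not merely near the laminate segments. The resolution is that $(H_I(\operatorname*{int}K))$ forces every vertex $\xi_i$ and every intermediate averaged matrix into the \emph{open} set $\operatorname*{int}K$, so the one-step building block can be carried out with a transition layer thin enough, and gradients perturbed little enough, to remain inside $\operatorname*{int}K$ by openness. This is precisely where condition (iii) strengthens the classical setting of Theorem \ref{Relaxation property theorem for RcoE}: unlike the case $K=\operatorname*{Rco}E$, we do not require all vertices to belong to $E_\delta$, only a fraction of total weight exceeding $1-\delta$, and the boundedness of $K$, through the constant $C$, controls the contribution of the remaining vertices.
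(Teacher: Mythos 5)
Your proposal is correct and follows essentially the same route as the paper: the same calibration of $\delta$ via conditions (i)--(ii), the same use of (iii) to produce a chain satisfying $(H_I(\operatorname*{int}K))$, the same realization of that chain by a piecewise affine map with prescribed volume fractions, and the same final splitting of $\int_\Omega\mathrm{dist}(Dv;E)\,dx$ into the $E_\delta$-vertices, the small-weight vertices (controlled by boundedness of $K$), and the exceptional set. The only difference is presentational: the lamination step you invoke as ``standard'' is not quite the classical approximation lemma but the refinement in which all gradients are confined to the open set $\operatorname*{int}K$; the paper isolates this as its Lemma \ref{lemma} and proves it by induction on $I$ via the M\"uller--Sychev one-step lemma, which is exactly the inductive argument you sketch when addressing what you call the main obstacle.
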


\begin{remark} 1) In the sense of Proposition \ref{approximpliesC},
Theorem \ref{sufficientconditionforrelaxation}
is a generalization of the usual approximation property for
the hulls $\operatorname*{Rco}_f$.

2) This result is analogous to Theorem 6.15 in \cite{Dac-Marc} and allows us to work with subsets of
$\operatorname*{Rco}\nolimits_{f}E$.
\end{remark}

Before proving the result we establish two corollaries.

\begin{corollary}\label{corol1}
Let $E\subset\mathbb{R}^{N\times n}$ be bounded. Assume that there exist compact sets
$E_\delta\subset\mathbb{R}^{N\times n}$
such that, defining $K_\delta=\operatorname*{Rco}\nolimits_{f}E_\delta$ and
$K=\operatorname*{Rco}\nolimits_{f}E$, $K_\delta\subset\operatorname*{int}K$ and conditions $(i)$ and
$(ii)$ of Theorem \ref{sufficientconditionforrelaxation} are satisfied. Then
$\mathrm{int}\operatorname*{Rco}\nolimits_{f}E$ has the relaxation property with respect to $E$.
\end{corollary}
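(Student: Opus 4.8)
The plan is to deduce the statement directly from Theorem \ref{sufficientconditionforrelaxation}, taking $K=\operatorname*{Rco}_fE$. With this choice the conclusion ``$\operatorname*{int}K$ has the relaxation property with respect to $E$'' is exactly ``$\operatorname*{int}\operatorname*{Rco}_fE$ has the relaxation property with respect to $E$'', so it suffices to exhibit a family of sets for which hypotheses (i)--(iii) of that theorem hold. Conditions (i) and (ii) are assumed in the statement (for the given $E_\delta$ and $K_\delta=\operatorname*{Rco}_fE_\delta$), so the real content is the verification of (iii). I would first record that $K=\operatorname*{Rco}_fE$ is bounded, since $\operatorname*{Rco}_fE\subseteq\overline{\operatorname*{co}E}$ and $E$ is bounded; hence $U:=\operatorname*{int}K$ is an open, bounded set, and by hypothesis it contains each compact hull $K_\delta=\operatorname*{Rco}_fE_\delta$. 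This makes $U$ an admissible choice in Theorem \ref{Rcofcharact} applied to the compact set $E_\delta$.

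The verification of (iii) is then an application of the characterization \eqref{rcof_with_property}. Fixing $\delta>0$ and $\xi\in K_\delta=\operatorname*{Rco}_fE_\delta$, and applying Theorem \ref{Rcofcharact} to $E_\delta$ with $U=\operatorname*{int}K$ and a parameter $\varepsilon\le\delta$ to be fixed below, I obtain $I\in\mathbb{N}$ and $(\lambda_i,\xi_i)_{1\le i\le I}$, $\lambda_i>0$, $\sum_i\lambda_i=1$, satisfying $(H_I(\operatorname*{int}K))$, with $\xi=\sum_i\lambda_i\xi_i$ and $\sum_{\xi_i\notin B_\varepsilon(E_\delta)}\lambda_i<\varepsilon$. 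This already supplies both the chain structure $(H_I(\operatorname*{int}K))$ and the convex representation of $\xi$ demanded by (iii).

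The one genuine obstacle is that Theorem \ref{Rcofcharact} controls the mass sitting outside the neighbourhood $B_\varepsilon(E_\delta)$, whereas (iii) asks for control of the mass sitting outside $E_\delta$ itself; the vertices lying in $B_\varepsilon(E_\delta)\setminus E_\delta$ are harmless for the former but not for the latter. I would remove this gap by feeding into Theorem \ref{sufficientconditionforrelaxation} not the given $E_\delta$, but the slightly thickened closed sets $\mathcal E_\delta:=\overline{B_{\rho_\delta}(E_\delta)}$, where $\rho_\delta>0$ is chosen so that $\rho_\delta\le\delta$ and $\mathcal E_\delta\subset\operatorname*{int}K$ (possible since $E_\delta$ is compact and contained in the open set $\operatorname*{int}K$). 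With this family, (i) is preserved because $\rho_\delta\to0$ (every point of $\mathcal E_\delta$ lies within $\rho_\delta$ of $E_\delta$, whose distance to $E$ tends to $0$ by the original (i)), while (ii) is untouched since $K_\delta$ is unchanged. For (iii), choosing $\varepsilon\le\min\{\delta,\rho_\delta\}$ forces $B_\varepsilon(E_\delta)\subseteq\mathcal E_\delta$, so that $\{\,\xi_i\notin\mathcal E_\delta\,\}\subseteq\{\,\xi_i\notin B_\varepsilon(E_\delta)\,\}$ and hence $\sum_{\xi_i\notin\mathcal E_\delta}\lambda_i<\varepsilon\le\delta$, which is precisely the mass bound required by (iii) for the family $(\mathcal E_\delta,K_\delta)$. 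Applying Theorem \ref{sufficientconditionforrelaxation} to this family then yields the relaxation property of $\operatorname*{int}\operatorname*{Rco}_fE$ with respect to $E$, as claimed.
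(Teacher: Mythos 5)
Your proof is correct and follows essentially the same route as the paper: the paper also replaces the given $E_\delta$ by ``artificial'' thickened approximating sets (it uses $\widetilde{E}_\delta=B_\delta(E_\delta)$, you use $\overline{B_{\rho_\delta}(E_\delta)}$ with $\rho_\delta\le\delta$), keeps $K_\delta=\operatorname*{Rco}\nolimits_f E_\delta$, and obtains condition $(iii)$ from Theorem \ref{Rcofcharact} applied to the compact set $E_\delta$ with $U=\operatorname*{int}\operatorname*{Rco}\nolimits_f E$, exactly so that the mass bound outside $B_\varepsilon(E_\delta)$ becomes a mass bound outside the thickened set. Your extra requirement $\mathcal{E}_\delta\subset\operatorname*{int}K$ is harmless but not needed, since Theorem \ref{sufficientconditionforrelaxation} imposes no such containment on the approximating sets.
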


\begin{remark} One can easily see that Theorem \ref{sufficientconditionforrelaxation} is also true if in
condition $(iii)$ we replace $E_\delta$ by $B_\delta(E)$. In this case, Corollary \ref{corol1} follows
directly from Theorem \ref{sufficientconditionforrelaxation} and Proposition \ref{approximpliesC} below. In
the proof that we present here we have to consider artificial approximating sets $E_\delta$.
\end{remark}

\noindent\textbf{Proof of Corollary \ref{corol1}.}
Let $\widetilde{E}_\delta=B_\delta(E_\delta)$ and let
$\widetilde{K}_\delta=\operatorname*{Rco}\nolimits_{f}E_\delta$. We will show the result as an application of
Theorem \ref{sufficientconditionforrelaxation} for this choice of approximating sets.

By the hypotheses on $E_\delta$ and $K_\delta$ one can easily see that conditions $(i)$ and $(ii)$ of Theorem
\ref{sufficientconditionforrelaxation} are still satisfied by $\widetilde{E}_\delta$ and
$\widetilde{K}_\delta$. Condition $(iii)$ follows from Theorem \ref{Rcofcharact} applied to
$\operatorname*{Rco}\nolimits_{f}E_\delta$, noticing that we are assuming that $E_\delta$ is compact,
and taking
$U=\operatorname*{int}\operatorname*{Rco}\nolimits_{f}E$ which contains
$\operatorname*{Rco}\nolimits_{f}E_\delta$ by hypothesis.
$\hfill\Box$\medskip

The following result was already proved by Ribeiro \cite{Ribeiro-thesis}.

\begin{corollary}\label{corol2}
Let $E, K\subset\mathbb{R}^{N\times n}$ be such that $E$ is compact and $K$ is bounded. Assume that the following
condition holds:

$(H)$ given $\delta>0$, there exists
$L=L(\delta,E,K)\in\mathbb{N}$ such that
$$\begin{array}{l}\forall\ \xi\in \operatorname*{int}K\setminus B_{\delta}(E) \vspace{0.2cm}\\ \exists \
\eta_1,...,\eta_J\in \mathbb{R}^{N\times n},\ J\in\mathbb{N},\
J\le L,\  \operatorname*{rank}(\eta_j)=1,\ j=1,...,J\vspace{0.2cm}\\
\left[\xi+\eta_1+...+\eta_{j-1}-\eta_j,\xi+\eta_1+...+\eta_{j-1}+\eta_j\right] \subset \operatorname*{int}K,\
j=1,...,J,\vspace{0.2cm}\\
\xi+\eta_1+...+\eta_J\in B_{\delta}(E),
\end{array}
$$
where $[A,B]$ represents the segment joining the matrices $A$ and $B$.
Then $\operatorname*{int}K$ has the relaxation property with respect to $E$.
\end{corollary}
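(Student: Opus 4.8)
The plan is to derive Corollary \ref{corol2} from the main Theorem \ref{sufficientconditionforrelaxation} by constructing suitable approximating families $E_\delta$ and $K_\delta$ from the ``chain'' structure postulated in hypothesis $(H)$. The natural choice is to set $K_\delta = \operatorname*{int}K \setminus B_\delta(E)$ together with the points of $\operatorname*{int}K$ reachable as endpoints, and $E_\delta = B_\delta(E) \cap \operatorname*{int}K$ (or simply $E_\delta = B_\delta(E)$), so that conditions $(i)$ and $(ii)$ of Theorem \ref{sufficientconditionforrelaxation} hold almost by inspection. For $(i)$: any $\eta \in E_\delta \subset B_\delta(E)$ satisfies $\mathrm{dist}(\eta;E) < \delta$, so given $\varepsilon$ one takes $\delta_0 = \varepsilon$. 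For $(ii)$: if $\eta \in \operatorname*{int}K$, then either $\eta \in B_\delta(E)$ for all small $\delta$ (hence $\eta \in E_\delta \subset K_\delta$) or $\eta$ stays at positive distance from $E$, in which case $\eta \in \operatorname*{int}K \setminus B_\delta(E) \subset K_\delta$ for all sufficiently small $\delta$.

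The crux of the argument is verifying condition $(iii)$, and this is exactly where hypothesis $(H)$ is tailored to do the work. First I would reformulate the chain $\eta_1,\dots,\eta_J$ of rank-one matrices from $(H)$ as an iterated splitting: starting from $\xi \in K_\delta \setminus E_\delta$, the two endpoints $\xi \pm \eta_1$ of the first admissible segment are rank-one connected (their difference is $2\eta_1$, of rank one) and lie in $\operatorname*{int}K$, so one writes $\xi = \tfrac12(\xi+\eta_1) + \tfrac12(\xi-\eta_1)$ as a rank-one convex combination. Iterating the lamination along the chain $\xi + \eta_1 + \cdots + \eta_{j-1} \pm \eta_j$ produces, after $J$ steps, a finite family $(\lambda_i,\xi_i)_{1\le i\le I}$ with $\xi = \sum_i \lambda_i \xi_i$. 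The condition that every segment lies in $\operatorname*{int}K$ is precisely what guarantees that the resulting vertices and intermediate nodes satisfy $(H_I(\operatorname*{int}K))$ in the sense of Definition \ref{defH_I}. Finally, the terminal vertex $\xi + \eta_1 + \cdots + \eta_J$ lands in $B_\delta(E)$, hence in $E_\delta$; one then has to check that the total mass $\sum_{\xi_i \notin E_\delta} \lambda_i$ assigned to the ``bad'' vertices can be made smaller than $\delta$.

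**The hard part will be** this last mass estimate, because the naive binary lamination above sends only a fraction of the weight to the good terminal point while leaving weights of order $1/2, 1/4, \dots$ at the intermediate endpoints, which need not lie in $E_\delta$. The uniform bound $J \le L = L(\delta,E,K)$ in $(H)$ is the key ingredient here: it ensures that the number of lamination steps is controlled independently of $\xi \in \operatorname*{int}K \setminus B_\delta(E)$, so one can iterate the construction. The idea I would pursue is to reapply $(H)$ to each intermediate endpoint that still lies outside $B_\delta(E)$, driving more and more of the total mass into $B_\delta(E)$ at each round; since each application moves a definite fraction of the remaining mass into $E_\delta$ in at most $L$ steps, after finitely many rounds the residual mass on vertices outside $E_\delta$ is below any prescribed $\delta$. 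Making this bookkeeping precise — tracking how the weights redistribute and confirming that the concatenated laminations still satisfy the nested rank-one structure $(H_I(\operatorname*{int}K))$ — is the main technical obstacle.

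Once $(i)$, $(ii)$ and $(iii)$ are established for the families $E_\delta$ and $K_\delta$ just described, **I would conclude** by a direct invocation of Theorem \ref{sufficientconditionforrelaxation}, which yields that $\operatorname*{int}K$ has the relaxation property with respect to $E$. I expect that the compactness of $E$ enters only through the approximating structure of the $B_\delta(E)$ and the boundedness of $K$ guarantees the families involved are bounded, so that the hypotheses of the main theorem are met; no further regularity of $K$ beyond what $(H)$ already encodes should be needed.
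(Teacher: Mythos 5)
Your proposal is correct and follows essentially the same route as the paper: the authors also invoke Theorem \ref{sufficientconditionforrelaxation} with $E_\delta=\operatorname*{int}K\cap B_\delta(E)$ and $K_\delta=(\operatorname*{int}K\cap E)\cup(\operatorname*{int}K\setminus B_\delta(E))$, verify condition $(iii)$ via exactly your telescoping binary lamination $\xi=\sum_{j=1}^{J}2^{-j}(\xi+\eta_1+\dots+\eta_{j-1}-\eta_j)+2^{-J}(\xi+\eta_1+\dots+\eta_J)$, and handle the mass estimate by your proposed iteration, reapplying $(H)$ to each bad vertex so that the mass outside $B_\delta(E)$ decays like $(1-2^{-L})^k$ thanks to the uniform bound $J\le L$. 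The bookkeeping you flag as the main obstacle is carried out in the paper precisely along the lines you sketch.
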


\begin{remark}\label{KsubsetRcofE}
Using the same ideas of the following proof, it turns out that, under the conditions of Corollary \ref{corol2}, the set $K$
is contained in $\operatorname*{Rco}_f E$.
\end{remark}

\noindent\textbf{Proof of Corollary \ref{corol2}.}
We will prove that
$$E_\delta=\mathrm{int}\,K\cap B_\delta(E)\qquad\text{and}\qquad
K_\delta=(\mathrm{int}\,K\cap E) \cup (\mathrm{int}\,K\setminus B_\delta(E))$$
satisfy conditions $(i)$, $(ii)$ and $(iii)$ of Theorem \ref{sufficientconditionforrelaxation}.

Condition $(i)$ is trivial. To get condition $(ii)$, we observe that if $\eta\in\mathrm{int}\,K$ then either
$\eta\in E$, and thus $\eta\in K_\delta$ for every $\delta>0$, or $\eta\notin E$. In this last case, since $E$
 is compact, $\mathrm{dist}\,(\eta;E)>0$ which entails $(ii)$.

It remains to show condition $(iii)$. Let $\delta>0$, $\xi\in K_\delta$ and consider $L=L(\delta, E,
K)\in\mathbb{N}$ as in the hypothesis. If $\xi\in \mathrm{int}\,K\cap E$, then condition $(iii)$
is satisfied with $I=1$ and
$(\lambda_i,\xi_i)_{i=1}=(1,\xi)$ and we are left with the case $\xi\in\mathrm{int}\,K\setminus B_\delta(E)$.
Applying the hypothesis, we have
$$\begin{array}{rl}&\exists \ \eta_1,...,\eta_J\in \mathbb{R}^{N\times n},\ J\in\mathbb{N},\
J\le L,\  \operatorname*{rank}(\eta_j)=1,\ j=1,...,J\vspace{0.2cm}\\ &
[\xi+\eta_1+...+\eta_{j-1}-\eta_j,\xi+\eta_1+...+\eta_{j-1}+\eta_j]\subset \operatorname*{int}K,\
j=1,...,J,\vspace{0.2cm}\\&
 \xi+\eta_1+...+\eta_J\in B_{\delta}(E).\end{array}
$$

Thus, by iteratively writing convex combinations using the matrices $\eta_i$, $i = 1, \cdots,J$, we obtain
\begin{equation}\label{eqcorol}\xi=\frac{1}{2}(\xi-\eta_1)+\frac{1}{2^2}(\xi+\eta_1-\eta_2)+...
+\frac{1}{2^J}(\xi+\eta_1+...+\eta_{J-1}-\eta_J)+
\frac{1}{2^J}(\xi+\eta_1+...+\eta_J).
\end{equation}
We notice that if we take $$\left\{\begin{array}{l}\lambda_j=\frac{1}{2^j},\ \text{if}\
1\le j\le J\vspace{0.2cm}\\
\lambda_{J+1}=\frac{1}{2^J}\end{array}\right.\; \; \text{and} \; \; \left\{\begin{array}{l}
\xi_j=\xi+\eta_1+...+\eta_{j-1}-\eta_j,\ \text{if}\ 1\le j\le J\vspace{0.2cm}\\
\xi_{J+1}=\xi+\eta_1+...+\eta_J,\end{array}\right.$$
then $(\lambda_j,\xi_j)_{1\le j\le J+1}$ satisfy $(H_{J+1}(\mathrm{int}\,K))$ and (\ref{eqcorol}) can be
rewritten in the form
$$\xi=\sum_{j=1}^{J+1}\lambda_j\xi_j,\quad\text{with}\quad\sum_{\substack{j=1\\ \xi_j\notin
B_\delta(E)}}^{J+1}\lambda_j\le 1-\frac{1}{2^J}\le 1-\frac{1}{2^L}.$$
If all $\xi_j\in B_\delta(E)$, then
$\displaystyle\sum_{\substack{j=1\\ \xi_j\notin B_\delta(E)}}^{J+1}\lambda_j=0$
and we have achieved condition $(iii)$. Otherwise, for each $\xi_j\in\mathrm{int}\,K\setminus B_\delta(E)$ we
apply again the hypothesis and get, for some $I_j\le L$,
$$\xi_j=\sum_{l=1}^{I_j+1}\widetilde{\lambda}_l^j\widetilde{\xi}_l^j\ \text{ such that }\
(\widetilde{\lambda}_l^j,\widetilde{\xi}_l^j)_{1\le l\le I_j+1}\text{ satisfy }(H_{I_j+1}(\mathrm{int}\,K))$$
and $$\sum_{\substack{l=1\\ \widetilde{\xi}_l^j\notin
B_\delta(E)}}^{I_j+1}\widetilde{\lambda}_l^j\le 1-\frac{1}{2^L}.$$
Therefore
$$
\xi=\sum_{\substack{j=1\\ \xi_j\in
B_\delta(E)}}^{J+1}\lambda_j\xi_j+\sum_{\substack{j=1\\ \xi_j\notin
B_\delta(E)}}^{J+1}\sum_{l=1}^{I_j+1}\lambda_j\widetilde{\lambda}_l^j\widetilde{\xi}_l^j\,,
$$
where the scalars and matrices in the above expression satisfy $(H_{\tilde{I}}(\mathrm{int}\,K))$
for a certain $\tilde{I} \in \mathbb{N}$, and
$$\sum_{\substack{j=1\\ \xi_j\notin
B_\delta(E)}}^{J+1}\sum_{\substack{l=1\\ \widetilde{\xi}_l^j\notin
B_\delta(E)}}^{I_j+1}\lambda_j\widetilde{\lambda}_l^j\le\sum_{\substack{j=1\\ \xi_j\notin
B_\delta(E)}}^{J+1}\lambda_j\left(1-\frac{1}{2^L}\right)\le\left(1-\frac{1}{2^L}\right)^2.$$
Of course, after a finite number of iterations of this procedure one gets condition $(iii)$.
$\hfill\Box$
\medskip

In order to prove Theorem \ref{sufficientconditionforrelaxation} we will show the following lemma.

\begin{lemma}\label{lemma}
Let $I>1$ be an integer and let $U\subset\mathbb{R}^{N\times n}$ be an open set. For $1\le i\le I$, let
$\xi_i\in \mathbb{R}^{N\times n}$ and $\lambda_i>0$ be such that $\sum_{i=1}^I\lambda_i=1$ and
$(\lambda_i,\xi_i)_{1\le i\le I}$ satisfy $(H_I(U))$. Denote by $\xi$ the sum
$\sum_{i=1}^I\lambda_i\xi_i$ and let
$u_{\xi}$ be an affine map such that $Du_{\xi} = \xi$. Then, for any
given $\varepsilon>0$ and any bounded open set $\Omega\subset\mathbb{R}^n$, there exist
$u_\varepsilon\in Aff_{piec}(\overline{\Omega};\mathbb{R}^N)$ and disjoint open sets
$\Omega_\varepsilon^i\subset\Omega$ such that
$$\begin{array}{l} u_\varepsilon\in
u_\xi+W_0^{1,\infty}(\Omega;\mathbb{R}^N),\vspace{0.2cm}\\
Du_\varepsilon(x)\in U\cup B_\varepsilon(\xi),\ a.e.\ x\in\Omega,\quad
Du_\varepsilon(x)=\xi_i,\ a.e.\ x\in\Omega_\varepsilon^i,\ i=1,...,I,\vspace{0.2cm}\\
||u_\varepsilon -u_\xi||_{L^{\infty}}\le\varepsilon,\quad
|\operatorname*{meas}(\Omega_\varepsilon^i)-\lambda_i\operatorname*{meas}(\Omega)|\le\varepsilon,\ i=1,...,I.
\end{array}$$
\end{lemma}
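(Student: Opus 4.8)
The plan is to prove this by induction on $I$, the number of matrices in the chain, since the structure $(H_I(U))$ is defined recursively. The base case $I=2$ is the classical lamination construction: two matrices $\xi_1,\xi_2\in U$ with $\operatorname{rank}(\xi_1-\xi_2)=1$ and $\xi=\lambda_1\xi_1+\lambda_2\xi_2$. Here one invokes the standard ``one-step'' lamination lemma (essentially \cite[Lemma 3.11 or similar]{DirectMethods}, or the Approximation Lemma of Dacorogna--Marcellini): given such a rank-one pair, one can build a piecewise affine $u_\varepsilon$ agreeing with $u_\xi$ on $\partial\Omega$, whose gradient takes only the two values $\xi_1,\xi_2$ on disjoint open subsets $\Omega_\varepsilon^1,\Omega_\varepsilon^2$ filling up $\Omega$ up to a small transition set, with gradient lying in $U\cup B_\varepsilon(\xi)$ there, and with the measures of $\Omega_\varepsilon^i$ close to $\lambda_i\operatorname{meas}(\Omega)$. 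The $L^\infty$-closeness $\|u_\varepsilon-u_\xi\|_{L^\infty}\le\varepsilon$ comes from refining the laminate on a fine enough scale. The key point making this work is that $U$ is open, so the transition regions, where the gradient deviates from $\{\xi_1,\xi_2\}$, can be kept inside $U$.

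For the inductive step, I would exploit exactly the reduction encoded in Definition \ref{defH_I}(iii). Suppose the lemma holds for $I-1$. Given $(\lambda_i,\xi_i)_{1\le i\le I}$ satisfying $(H_I(U))$, set $\mu_1=\lambda_1+\lambda_2$, $\eta_1=\frac{\lambda_1\xi_1+\lambda_2\xi_2}{\lambda_1+\lambda_2}$, and $(\mu_i,\eta_i)=(\lambda_{i+1},\xi_{i+1})$ for $2\le i\le I-1$; by hypothesis these satisfy $(H_{I-1}(U))$ and have the same barycenter $\xi$. Apply the inductive hypothesis to this shorter chain with a parameter $\varepsilon'\ll\varepsilon$ to obtain $v\in Aff_{piec}$ and disjoint open sets $\widetilde\Omega^1,\ldots,\widetilde\Omega^{I-1}$ on which $Dv=\eta_i$. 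Now, on the single piece $\widetilde\Omega^1$ where the gradient equals the intermediate value $\eta_1=\frac{\lambda_1\xi_1+\lambda_2\xi_2}{\mu_1}$, I note that $\eta_1$ is a rank-one convex combination of $\xi_1,\xi_2\in U$; so I apply the base-case ($I=2$) construction separately on each connected affine piece of $\widetilde\Omega^1$ to split that gradient into the two values $\xi_1,\xi_2$, producing subsets $\Omega_\varepsilon^1,\Omega_\varepsilon^2\subset\widetilde\Omega^1$. The remaining pieces $\widetilde\Omega^i$ ($2\le i\le I-1$) become $\Omega_\varepsilon^{i+1}$ directly. Gluing these modifications (they are supported inside $\widetilde\Omega^1$, hence compactly in $\Omega$, so boundary values are preserved) yields the desired $u_\varepsilon$.

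\textbf{The bookkeeping} for the measure and $L^\infty$ estimates is the part requiring care, not difficulty. The measure estimate $|\operatorname{meas}(\Omega_\varepsilon^i)-\lambda_i\operatorname{meas}(\Omega)|\le\varepsilon$ follows by a two-stage accounting: the inductive step gives $\operatorname{meas}(\widetilde\Omega^1)\approx\mu_1\operatorname{meas}(\Omega)$, and the base-case split inside $\widetilde\Omega^1$ distributes this in proportions $\frac{\lambda_1}{\mu_1},\frac{\lambda_2}{\mu_1}$, so that $\operatorname{meas}(\Omega_\varepsilon^1)\approx\frac{\lambda_1}{\mu_1}\cdot\mu_1\operatorname{meas}(\Omega)=\lambda_1\operatorname{meas}(\Omega)$, and similarly for $\Omega_\varepsilon^2$; choosing $\varepsilon'$ small enough absorbs the compounded errors under the single tolerance $\varepsilon$. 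The $L^\infty$ bound is additive across the two stages, so again a sufficiently small $\varepsilon'$ in the inductive call suffices. One must also verify the gradient constraint: on the new transition regions produced by the second-stage lamination the gradient stays in $U$ (using openness of $U$ and that $\xi_1,\xi_2\in U$), while the global closeness of $Dv$ to $\xi$ on the leftover set of small measure keeps $Du_\varepsilon\in U\cup B_\varepsilon(\xi)$.

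\textbf{The main obstacle} I anticipate is ensuring that the intermediate barycenter $\eta_1$ genuinely lies in $U$ so that the second-stage lamination is admissible, together with the subtlety that the base-case construction must be performed piece-by-piece on the possibly many affine components of $\widetilde\Omega^1$ with a uniform scale. The definition $(H_I(U))$ only asserts the \emph{vertices} $\xi_i\in U$; that the nested intermediate barycenters lie in $U$ is precisely the content of the recursive clause, and one should check it propagates correctly through the induction. Once this membership and the uniformity of the transition scale are secured, the rest is a controlled superposition of laminates, and the estimates close by choosing the inner tolerances small relative to the outer $\varepsilon$.
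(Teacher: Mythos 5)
Your proposal is correct and follows essentially the same route as the paper's proof: induction on $I$, with the base case $I=2$ handled by the M\"uller--Sychev approximation lemma (auxiliary directions $b_3,\dots,b_k$ chosen so small that the extra gradients fall in $B_\varepsilon(\xi)$), and the inductive step obtained by applying the induction hypothesis to the merged chain $(\mu_i,\eta_i)$ and then re-laminating inside $\widetilde{\Omega}_\varepsilon^1$ to split $\eta_1$ into $\xi_1,\xi_2$, with the same two-stage measure and $L^\infty$ bookkeeping. One small correction to your base-case remark: there the transition gradients are kept in $B_\varepsilon(\xi)$, not in $U$ (the segment $[\xi_1,\xi_2]$ need not lie in $U$, which is exactly why the conclusion reads $U\cup B_\varepsilon(\xi)$); openness of $U$ is needed only in the inductive step, where the second-stage transitions are confined to a ball $B_\delta(\eta_1)\subset U$ around the vertex $\eta_1\in U$ guaranteed by $(H_{I-1}(U))$.
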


The proof of the lemma relies on the following approximation result, due to M\"{u}ller and Sychev
\cite[Lemma 3.1]{Muller-Sychev}, which is a refinement of a classical result.
For $a \in \mathbb{R}^N$ and $b \in \mathbb{R}^n$ we will denote by $a\otimes b$ the $N\times n$ matrix whose $(i,j)$ entry is $a_i b_j$.
\begin{lemma}
[Approximation lemma]\label{key-lemma}Let
$\Omega\subset\mathbb{R}^{n}$ be a bounded open set. Let
$A,B\in\mathbb{R}^{N\times n}$
be such that
$
A-B=a\otimes b
$,
with $a\in\mathbb{R}^{N}$\ and $b\in\mathbb{R}^{n}$. Let
$b_{3},\dots,b_{k} \in\mathbb{R}^{n},$ $k\geq n$, be such that
$0\in\operatorname*{int}\operatorname*{co}\{b,-b,b_{3},\dots,b_{k}\}$ and, for $t\in\lbrack0,1]$,
let $\varphi$ be an affine map such that
\[
D\varphi(x)=\xi=tA+(1-t)B,\ x\in\overline{\Omega}
\]
(i.e. $A=\xi+\left(  1-t\right)  a\otimes b$ and $B=\xi-ta\otimes
b$). Then, for every $\varepsilon>0,$ there exists
$u \in Aff_{piec}(\overline{\Omega};\mathbb{R}^N)$ and there exist disjoint open sets
$\Omega_{A},\Omega_{B}\subset\Omega,$ such that
\[
\left\{
\begin{array}
[c]{l}
\left\vert \operatorname*{meas}\,(\Omega_{A})-
t\;\operatorname*{meas}\,(\Omega)\right\vert \leq \varepsilon ,\;
\left\vert \operatorname*{meas}\,(\Omega_{B})
-(1-t)\,\operatorname*{meas}\,(\Omega)\right\vert \leq\varepsilon\\
u(x)=\varphi(x),\ x\in\partial\Omega\\
\left\vert u(x)-\varphi(x)\right\vert \leq\varepsilon,\ x\in\Omega\\
Du(x)=\left\{
\begin{array}
[c]{ll}
A & \text{in }\Omega_{A}\\
B & \text{in }\Omega_{B}
\end{array}
\right.  \\
Du(x)\in\xi+\{\left(  1-t\right)  a\otimes b,-ta\otimes b,a\otimes
b_{3},\dots,a\otimes b_{k}\},\ \text{a.e. }x\text{ in }\Omega.
\end{array}
\right.
\]
\end{lemma}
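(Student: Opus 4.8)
The plan is to use the rank-one structure of the admissible gradients to reduce the vectorial statement to a scalar one, and then to realise the prescribed gradient distribution by a simple laminate in the direction $b$ repaired, near the boundary, by thin layers built from the auxiliary directions $b_3,\dots,b_k$. Throughout I assume $t\in(0,1)$, the cases $t\in\{0,1\}$ being trivial (take $u=\varphi$). Since $A-B=a\otimes b$ and each admissible increment $Du-\xi$ must lie in $\{(1-t)a\otimes b,\,-ta\otimes b,\,a\otimes b_3,\dots,a\otimes b_k\}$, every such matrix has the form $a\otimes c$ with $c\in G:=\{(1-t)b,\,-tb,\,b_3,\dots,b_k\}$. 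I would therefore seek $u=\varphi+a\,w$ with $w:\Omega\to\mathbb{R}$ scalar and piecewise affine, so that $Du=\xi+a\otimes\nabla w$. The five conclusions then translate into the scalar requirements $w=0$ on $\partial\Omega$, $\nabla w\in G$ a.e., $\|w\|_{L^\infty}\le\varepsilon/|a|$, and $\operatorname*{meas}\{\nabla w=(1-t)b\}$, $\operatorname*{meas}\{\nabla w=-tb\}$ within $\varepsilon$ of $t\operatorname*{meas}\Omega$ and $(1-t)\operatorname*{meas}\Omega$, which forces $\operatorname*{meas}\{\nabla w\in\{b_3,\dots,b_k\}\}$ to be small. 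The two main slopes obey the balance $t\,(1-t)b+(1-t)(-tb)=0$, so $0$ is their convex combination with weights exactly $t$ and $1-t$.

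Before constructing $w$ I would record the key consequence of the hypothesis: for $t\in(0,1)$, $0\in\operatorname*{int}\operatorname*{co}\{b,-b,b_3,\dots,b_k\}$ is equivalent to $0\in\operatorname*{int}\operatorname*{co}(G)$. Indeed, if some closed half-space $\{\langle\cdot,v\rangle\le0\}$ contained $G$, then $\langle(1-t)b,v\rangle\le0$ and $\langle-tb,v\rangle\le0$ would force $\langle b,v\rangle=0$, so the same half-space would contain $\{b,-b,b_3,\dots,b_k\}$, contradicting the hypothesis; the converse is analogous. This is exactly the solvability condition for the scalar inclusion $\nabla w\in G$ with zero boundary data, and it is the quantitative room that lets the auxiliary directions close up the construction near the boundary.

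The function $w$ I would build from two ingredients, assembled by a Vitali covering. In the bulk I use the simple laminate $w=h(x\cdot\hat b)$, $\hat b=b/|b|$, with $h$ a fine sawtooth whose slope alternates so that $\nabla w$ equals $(1-t)b$ on a volume fraction exactly $t$ and $-tb$ on a fraction exactly $1-t$; the balance relation makes $h$ periodic, so a small period keeps $\|w\|_{L^\infty}$ as small as desired while using only the two main slopes. This laminate does not vanish on the whole boundary, so I would first produce a single reference block on a convenient bounded open set $D$: the laminate fills the interior and, on a shell of thickness $\rho$ near $\partial D$, the (already small) function is brought down to $0$ using increments in the directions $b_3,\dots,b_k$, which is possible precisely because $0\in\operatorname*{int}\operatorname*{co}(G)$. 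This shell has measure $O(\rho)\operatorname*{meas}D$, so the auxiliary-gradient set occupies relative measure at most $\delta$ once $\rho$ is small. Finally I would apply the Vitali covering theorem to exhaust $\Omega$ up to a null set by disjoint rescaled copies $x_i+r_iD$, place the affinely rescaled block on each and set $w=0$ on the remainder; rescaling preserves the gradient set and the volume fractions, divides the amplitude by $r_i\le1$, and keeps $w=0$ across every interface and on $\partial\Omega$. Choosing $\rho$ (hence $\delta$) and the laminate period small in terms of $\varepsilon$ then yields all five properties.

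The step I expect to be the genuine obstacle is the reference block: constructing on $D$ a piecewise affine $w$ that vanishes on $\partial D$, keeps $\nabla w$ inside the finite set $G$ everywhere — including in the transition shell, where only the auxiliary directions are permitted — realises the prescribed fractions for the two main slopes, and confines the auxiliary directions to a shell of arbitrarily small relative measure. This is where the quantitative content of $0\in\operatorname*{int}\operatorname*{co}(G)$ must be turned into an explicit geometric construction matching the oscillating laminate to zero boundary data, and it is the one place where more than a routine covering or rescaling argument is needed. Once this block is in hand, the Vitali exhaustion and the verification of the measure, boundary, and $L^\infty$ estimates are routine.
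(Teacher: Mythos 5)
The paper does not prove this lemma at all --- it is quoted verbatim from M\"uller and Sychev \cite[Lemma 3.1]{Muller-Sychev} --- so your attempt can only be measured against the standard proof of that cited result. Your architecture matches it: the scalar ansatz $u=\varphi+a\,w$ with $\nabla w\in G=\{(1-t)b,-tb,b_3,\dots,b_k\}$, the equivalence $0\in\operatorname*{int}\operatorname*{co}\{b,-b,b_3,\dots,b_k\}\Leftrightarrow 0\in\operatorname*{int}\operatorname*{co}(G)$ (your separation argument for this is correct), a fine sawtooth laminate realising the fractions $t$ and $1-t$, and a Vitali exhaustion by rescaled copies of a reference block. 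But the proposal is genuinely incomplete at exactly the point you yourself flag: the reference block is asserted, not constructed. Saying the boundary matching is ``possible precisely because $0\in\operatorname*{int}\operatorname*{co}(G)$'' is not a proof --- turning that hypothesis into a piecewise affine function with gradients confined to the finite set $G$ \emph{is} the content of the lemma, and without it nothing has been shown.

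The missing step is closed by a standard device you do not mention. For each $c\in G$ choose $\beta_c>0$ and set $v(x):=\min_{c\in G}\left(\beta_c+\langle c,x\rangle\right)$; this is concave, piecewise affine, with $\nabla v\in G$ a.e., and $P:=\{v>0\}$ is a \emph{bounded} polytope precisely because $0\in\operatorname*{int}\operatorname*{co}(G)$ forces the recession cone to be trivial (if $\langle c,y\rangle\geq 0$ for all $c\in G$ then $y=0$). Now let $s\geq 0$ be your sawtooth of period $\eta$, with slopes $(1-t)b$ and $-tb$ in fractions $t$ and $1-t$ and $0\leq s\leq C\eta$, and define $w:=\min(s,v)$ on $P$, extended by zero. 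Then $w=0$ on $\partial P$, $\nabla w\in G$ a.e. (a min of two piecewise affine functions takes gradients from the union of their gradient sets), the bound $0\leq w\leq C\eta$ gives the $L^{\infty}$ estimate for free, and $\{w\neq s\}\subseteq\{v<C\eta\}$ is a boundary shell of measure $O(\eta)\operatorname*{meas}(P)$, so the volume fractions of the two principal gradients are within $O(\eta)$ of $t$ and $1-t$. This also corrects a misconception in your framing: it is not true that ``only the auxiliary directions are permitted'' in the transition shell --- all of $G$ is available everywhere, the tent $v$ itself uses the slopes $(1-t)b$ and $-tb$ as well as $b_3,\dots,b_k$, and no bespoke geometric matching of the oscillation to the boundary is needed; the min does it automatically. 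With this block in hand, your Vitali rescaling and the verification of the five conclusions are, as you say, routine.
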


\medskip

\noindent\textbf{Proof of Lemma \ref{lemma}.} We prove the result by induction on $I$.

If $I=2$ it suffices to apply Lemma \ref{key-lemma} choosing $|b_3|,..., |b_k|$ sufficiently
small so that $|a \otimes b_i| \leq \varepsilon$ for $i = 3,..., k$.

Now, let $I>2$ and consider $(\lambda_i,\xi_i)_{1\le i\le I}$ as in the hypothesis.
Up to a permutation, and defining
$$\left\{\begin{array}{ll}\mu_1=\lambda_1+\lambda_2, &
\eta_1=\frac{\lambda_1\xi_1+\lambda_2\xi_2}{\lambda_1+\lambda_2}\vspace{0.2cm}\\
\mu_i=\lambda_{i+1},& \eta_i=\xi_{i+1},\ 2\le i\le I-1,
\end{array}\right.$$ we have $\xi_1,\xi_2\in U$,
$\operatorname*{rank}(\xi_1-\xi_2)=1$ and $(\mu_i,\eta_i)_{1\le i\le I-1}$ satisfy $(H_{I-1}(U))$.
Then the induction hypothesis ensures the existence of
$v\in Aff_{piec}(\overline{\Omega};\mathbb{R}^N)$ and disjoint open sets
$\widetilde{\Omega}_\varepsilon^i\subset\Omega,\ i=1,...,I-1$ such that
$$\begin{array}{l} v\in
u_\xi+W_0^{1,\infty}(\Omega;\mathbb{R}^N),\vspace{0.2cm}\\
Dv(x)\in U\cup B_\varepsilon(\xi),\ a.e.\ x\in\Omega,\quad
Dv(x)=\eta_i,\ a.e.\ x\in \widetilde{\Omega}_\varepsilon^i,\ i=1,...,I-1\vspace{0.2cm}\\
||v -u_\xi||_{L^{\infty}}\le\frac{\varepsilon}{2},\quad
|\text{meas}(\widetilde{\Omega}_\varepsilon^i)-\mu_i \text{meas}(\Omega)|\le\frac{\varepsilon}{2},\
i=1,...,I-1.
\end{array}$$

Since $(\mu_i,\eta_i)_{1\le i\le I-1}$ satisfy $(H_{I-1}(U))$, then $\eta_1\in U$. Now let
$0<\delta<\varepsilon$ be such that the neighborhood of $\eta_1$, $B_\delta(\eta_1)$, is contained in $U$
and apply again Lemma \ref{key-lemma} in $\widetilde{\Omega}_\varepsilon^1$ to obtain
$w\in Aff_{piec}(\overline{\widetilde{\Omega}_\varepsilon^1};\mathbb{R}^N)$ and disjoint open sets
$\Omega_\varepsilon^i\subset \widetilde{\Omega}_\varepsilon^1,\ i=1,2$ such that
\begin{eqnarray}\label{meas12}
& & \hspace{-0,2cm} w\in v+W_0^{1,\infty}(\widetilde{\Omega}_\varepsilon^1;\mathbb{R}^N),
\vspace{0.2cm} \nonumber\\
& & \hspace{-0,2cm} Dw(x)\in \{\xi_1,\xi_2\}\cup B_\delta(\eta_1),\
a.e.\ x\in \widetilde{\Omega}_\varepsilon^1,\quad
Dw(x)=\xi_i,\ a.e.\ x\in \Omega_\varepsilon^i,\ i=1,2\vspace{0.2cm} \nonumber\\
& & \hspace{-0,2cm} ||w -v||_{L^{\infty}}\le\frac{\varepsilon}{2},\quad
\left|\text{meas}(\Omega_\varepsilon^i)-\frac{\lambda_i}{\lambda_1+\lambda_2}
\text{meas}(\widetilde{\Omega}_\varepsilon^1)\right|\le\frac{\varepsilon}{2},\ i=1,2.
\end{eqnarray}

We then obtain the desired result taking $\Omega_\varepsilon^1$ and $\Omega_\varepsilon^2$ as above,
$\Omega_\varepsilon^i=\widetilde{\Omega}_\varepsilon^{i-1}$ for $i=3,...,I$ and
$$u_\varepsilon=\left\{\begin{array}{l}v\
\mathrm{in}\ \Omega\setminus\widetilde{\Omega}_\varepsilon^1,\vspace{0.2cm}\\w\
\mathrm{in}\ \widetilde{\Omega}_\varepsilon^1.
\end{array}\right.$$
In fact we only need to verify that
$|\text{meas}(\Omega_\varepsilon^i)-\lambda_i\text{meas}(\Omega)|\le\varepsilon$ for $i=1,2$:
$$\begin{array}{rcl}|\text{meas}(\Omega_\varepsilon^i)-\lambda_i\text{meas}(\Omega)|& \le &
\left|\text{meas}(\Omega_\varepsilon^i)-\frac{\lambda_i}{\lambda_1+\lambda_2}
\text{meas}(\widetilde{\Omega}_\varepsilon^1)\right|+\vspace{0.2cm} \\ & &
+\left|\frac{\lambda_i}{\lambda_1+\lambda_2}\text{meas}(\widetilde{\Omega}_\varepsilon^1)-
\lambda_i\text{meas}(\Omega)\right|\vspace{0.2cm}\\
& \le & \frac{\varepsilon}{2}+\frac{\lambda_i}{\lambda_1+\lambda_2}\left|
\text{meas}(\widetilde{\Omega}_\varepsilon^1)-(\lambda_1+\lambda_2)\text{meas}(\Omega)\right|\vspace{0.2cm}\\&
= &  \frac{\varepsilon}{2}+\frac{\lambda_i}{\lambda_1+\lambda_2}\left|
\text{meas}(\widetilde{\Omega}_\varepsilon^1)-\mu_1\text{meas}(\Omega)\right| \vspace{0.2cm}\\
& \le & \varepsilon , \end{array}$$
where we have used (\ref{meas12}).
$\hfill\Box$\medskip

We can now prove Theorem \ref{sufficientconditionforrelaxation}.\medskip

\noindent\textbf{Proof of Theorem \ref{sufficientconditionforrelaxation}.}
Let $\Omega$ be an open bounded
subset of $\mathbb{R}^n$, $\xi\in\operatorname*{int}K$ and let us denote by $u_{\xi}$ an affine map such that
$Du_\xi=\xi$. We want to construct a sequence
$u_\varepsilon\in Aff_{piec}(\overline{\Omega};\mathbb{R}^N)$ such that
$$\begin{array}{l} u_\varepsilon\in
u_\xi+W_0^{1,\infty}(\Omega;\mathbb{R}^N),\quad
Du_\varepsilon(x)\in E\cup \operatorname*{int}K,\ a.e.\ x\in\Omega,\vspace{0.2cm}\\
u_\varepsilon\stackrel{*}{\rightharpoonup}u_\xi\ \mbox{in}\
W^{1,\infty}(\Omega;\mathbb{R}^N),\quad
\displaystyle{\lim_{\varepsilon\rightarrow 0^+}\int_\Omega
\mathrm{dist}(Du_\varepsilon(x);E)\,dx =0}.
\end{array}$$

Fix $\varepsilon>0$. From condition $(ii)$, and since $\xi\in\operatorname*{int}K$, we have
$\xi\in K_\delta$ for $\delta\le\delta_0.$
Choose $0<\delta\le\delta_0$ such that $\delta<\varepsilon$ and
\begin{equation}\label{disteps}
\mathrm{dist}(\eta;E)\le\varepsilon,\ \forall\ \eta\in E_\delta.
\end{equation}
This is possible from condition $(i)$.
We then apply condition $(iii)$ to obtain $I=I(\delta)\in\mathbb{N},\
 (\lambda_i,\xi_i)_{1\le i\le I}$ with $\lambda_i>0$,   $\sum_{i=1}^I\lambda_i=1$,
$\xi_i\in\mathbb{R}^{N\times n}$  satisfying $(H_I(\operatorname*{int}K))$ and such that
\begin{equation}\label{sumdelta}
\displaystyle{\xi=\sum_{i=1}^I\lambda_i\xi_i\quad \text{and}\quad
\sum_{\substack{i=1\\ \xi_i\notin E_\delta}}^I\lambda_i<\delta.}
\end{equation}
By Lemma \ref{lemma} we now get $u_\varepsilon\in Aff_{piec}(\overline{\Omega};\mathbb{R}^N)$ and disjoint
open sets $\Omega_\varepsilon^i\subset\Omega$ such that, for $\varepsilon$ sufficiently small,
\begin{eqnarray}\label{meas}
& & u_\varepsilon\in u_\xi+W_0^{1,\infty}(\Omega;\mathbb{R}^N),\vspace{0.2cm} \nonumber \\
& & Du_\varepsilon(x)\in \mathrm{int}\,K,\ a.e.\ x\in\Omega,\quad
Du_\varepsilon(x)=\xi_i,\ a.e.\ x\in\Omega_\varepsilon^i,\ i=1,...,I,\vspace{0.2cm} \nonumber \\
& & ||u_\varepsilon -u_\xi||_{L^{\infty}}\le\varepsilon,\quad
|\text{meas}(\Omega_\varepsilon^i)-\lambda_i\text{meas}(\Omega)|\le\frac{\varepsilon}{I},\ i=1,...,I.
\end{eqnarray}
Since $K$ is bounded, up to a subsequence, we have
$u_\varepsilon\stackrel{*}{\rightharpoonup}u_\xi\ \mbox{in}\
W^{1,\infty}(\Omega;\mathbb{R}^N).$
We will finish the proof by verifying that
$$\displaystyle{\lim_{\varepsilon\rightarrow 0^+}\int_\Omega
\mathrm{dist}(Du_\varepsilon(x);E)\,dx =0.}$$
Since $E$ and $K$ are bounded there exists a positive constant $c$ such that
\begin{equation}\label{distc}
\mathrm{dist}(\eta;E)\le c,\ \forall\ \eta\in \operatorname*{int}K.
\end{equation}
Then, using (\ref{disteps}), (\ref{distc}), (\ref{meas}) and (\ref{sumdelta}), we obtain the
following estimates,
$$\begin{array}{l}\displaystyle{\int_\Omega
\mathrm{dist}(Du_\varepsilon(x);E)\,dx}  = \vspace{0.2cm}\\ =
\displaystyle{\sum_{\substack{i=1\\ \xi_i\in
E_\delta}}^I\int_{\Omega^i_\varepsilon} \mathrm{dist}(\xi_i;E)\,dx
+\sum_{\substack{i=1\\ \xi_i\notin
E_\delta}}^I\int_{\Omega^i_\varepsilon}
\mathrm{dist}(\xi_i;E)\,dx+}
\vspace{0.2cm}\\ \qquad +
\displaystyle{\int_{\Omega\setminus\left(\cup_{i=1}^I\Omega^i_\varepsilon\right)}
\mathrm{dist}(Du_\varepsilon(x);E)\,dx}\vspace{0.2cm}\\ \le \displaystyle{\varepsilon\,
\text{meas}(\Omega)+c \sum_{\substack{i=1\\ \xi_i\notin
E_\delta}}^I\text{meas}(\Omega^i_\varepsilon)+c\,\text{meas}\left(\Omega\setminus\left
(\cup_{i=1}^I\Omega^i_\varepsilon\right)\right)}\vspace{0.2cm}\\ \le
\displaystyle{\varepsilon\,\text{meas}(\Omega)+c \sum_{\substack{i=1\\ \xi_i\notin
E_\delta}}^I\left(\frac{\varepsilon}{I}+\lambda_i\text{meas}(\Omega)\right)+c\,\varepsilon}\vspace{0.2cm}\\
\le  \displaystyle{\varepsilon\,\text{meas}(\Omega)+c\,\varepsilon
+c\,\varepsilon\,\text{meas}(\Omega)+c\,\varepsilon}.
\end{array}$$
This completes the proof.$\hfill\Box$\medskip

As already mentioned, the characterization of $\operatorname*{Rco}_f E$ obtained in
Section \ref{sectionreview} entails a similar condition to condition $(iii)$ of
Theorem \ref{sufficientconditionforrelaxation} under the approximation property assumption.
This is stated in the next proposition.

\begin{proposition}\label{approximpliesC}Let $E\subset\mathbb{R}^{N\times n }$ be a bounded set and
for $\delta>0$ let $E_\delta$ be compact sets such that
$\operatorname*{Rco}_fE_\delta\subset\mathrm{int}\operatorname*{Rco}_fE$ and\smallskip

(i) $\forall\ \varepsilon>0\ \exists\ \delta_0>0:\ \mathrm{dist}(\eta;E)\le \varepsilon, \forall\ \eta\in
E_\delta,\ \delta\in(0,\delta_0]$;\smallskip

(ii) $\eta\in \mathrm{int}\operatorname*{Rco}_fE\ \Rightarrow\ \exists\ \delta_0>0:\ \eta\in
\operatorname*{Rco}_fE_\delta,\  \forall\ \delta\in(0,\delta_0]$.\smallskip

\noindent Then the following condition is satisfied:

$\forall\ \delta>0\ \forall\ \xi\in \operatorname*{Rco}_fE_\delta\ \exists\ I\in\mathbb{N},\ \exists\
(\lambda_i,\xi_i)_{1\le i\le I} \text{ with } \lambda_i>0,\     \displaystyle\sum_{i=1}^I\lambda_i=1,$

$\xi_i\in\mathbb{R}^{N\times n}, \text{ satisfying } (H_I(\mathrm{int}\,\operatorname*{Rco}_fE))\text{ and }
\displaystyle{\xi=\sum_{i=1}^I\lambda_i\xi_i,\ \sum_{\substack{i=1\\ \xi_i\notin
B_\delta(E)}}^I\lambda_i<\delta}.$
\end{proposition}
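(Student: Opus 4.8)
The plan is to read off the required decomposition directly from the characterization of $\operatorname*{Rco}\nolimits_f$ of a compact set (Theorem \ref{Rcofcharact}), applied to the compact set $E_\delta$, and then to upgrade the proximity to $E_\delta$ produced by that theorem into proximity to $E$ by means of hypothesis (i).

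First I would fix $\delta>0$ and $\xi\in\operatorname*{Rco}\nolimits_f E_\delta$ and set $U=\operatorname*{int}\operatorname*{Rco}\nolimits_f E$. This $U$ is an admissible choice in Theorem \ref{Rcofcharact} for the set $E_\delta$: it is open by definition; it is bounded because $E$ is bounded, so that $\operatorname*{Rco}\nolimits_f E\subseteq\overline{\operatorname*{co}E}$ is bounded; and it contains $\operatorname*{Rco}\nolimits_f E_\delta$ by the standing hypothesis $\operatorname*{Rco}\nolimits_f E_\delta\subseteq\operatorname*{int}\operatorname*{Rco}\nolimits_f E$. The decisive gain of this particular $U$ is structural: the chains $(H_I(U))$ delivered by the theorem are exactly chains $(H_I(\operatorname*{int}\operatorname*{Rco}\nolimits_f E))$, which is precisely the form required in the conclusion. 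Thus, for any prescribed tolerance $\varepsilon>0$, Theorem \ref{Rcofcharact} furnishes $I\in\mathbb{N}$ and $(\lambda_i,\xi_i)_{1\le i\le I}$ with $\lambda_i>0$, $\sum_i\lambda_i=1$, satisfying $(H_I(\operatorname*{int}\operatorname*{Rco}\nolimits_f E))$, with $\xi=\sum_i\lambda_i\xi_i$ and $\sum_{\xi_i\notin B_\varepsilon(E_\delta)}\lambda_i<\varepsilon$.

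The only remaining task is to trade the neighborhood $B_\varepsilon(E_\delta)$ for $B_\delta(E)$. Here I would invoke hypothesis (i), choosing $\varepsilon\in(0,\delta]$ small enough that $B_\varepsilon(E_\delta)\subseteq B_\delta(E)$. Indeed, if $\zeta\in B_\varepsilon(E_\delta)$ and $\eta\in E_\delta$ satisfies $|\zeta-\eta|<\varepsilon$, then $\mathrm{dist}(\zeta;E)\le|\zeta-\eta|+\mathrm{dist}(\eta;E)<\varepsilon+\sup_{\eta\in E_\delta}\mathrm{dist}(\eta;E)$, so it suffices to take $\varepsilon$ below the gap between $\delta$ and $\sup_{\eta\in E_\delta}\mathrm{dist}(\eta;E)$, the latter being exactly what (i) controls. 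Once $B_\varepsilon(E_\delta)\subseteq B_\delta(E)$, one has the inclusion of index sets $\{i:\xi_i\notin B_\delta(E)\}\subseteq\{i:\xi_i\notin B_\varepsilon(E_\delta)\}$, whence $\sum_{\xi_i\notin B_\delta(E)}\lambda_i\le\sum_{\xi_i\notin B_\varepsilon(E_\delta)}\lambda_i<\varepsilon\le\delta$, which is the asserted estimate.

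The step I expect to be the main obstacle is precisely this conversion of proximity, and it is instructive to see why it cannot be bypassed. Applying Theorem \ref{Rcofcharact} instead to the compact set $\overline{E}$ (for which $\operatorname*{Rco}\nolimits_f\overline{E}=\operatorname*{Rco}\nolimits_f E$, since real-valued rank one convex functions are continuous) would produce proximity to $E$ with tolerance $\delta$ for free; but then the admissible sets $U$ would have to contain the closed set $\operatorname*{Rco}\nolimits_f E$, so no choice of $U$ could keep the chain vertices inside the strictly smaller open set $\operatorname*{int}\operatorname*{Rco}\nolimits_f E$. This is exactly the tension the proposition resolves: applying the characterization to $E_\delta$ secures the chain location $(H_I(\operatorname*{int}\operatorname*{Rco}\nolimits_f E))$, at the price of having to push the resulting vertices from $B_\varepsilon(E_\delta)$ into $B_\delta(E)$ via (i), which forces the tolerance fed into Theorem \ref{Rcofcharact} to be finer than the distance from $E_\delta$ to $E$. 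The hull inclusion hypothesis serves only to legitimize the choice $U=\operatorname*{int}\operatorname*{Rco}\nolimits_f E$, while hypothesis (ii) is not needed to produce the decomposition itself and belongs to the companion conditions under which this proposition feeds Corollary \ref{corol1}.
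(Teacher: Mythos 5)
Your argument has a genuine gap at exactly the step you flag as the crux: the conversion of $B_\varepsilon(E_\delta)$ into $B_\delta(E)$. You assert that hypothesis (i) controls $\sup_{\eta\in E_\delta}\mathrm{dist}(\eta;E)$ for the \emph{fixed} $\delta$ under consideration, but (i) is purely asymptotic: for each $\varepsilon>0$ it produces a threshold $\delta_0(\varepsilon)$ and constrains $E_\mu$ only for $\mu\le\delta_0(\varepsilon)$. The $\delta$ in the conclusion is an arbitrary positive number fixed in advance, and nothing forces $\delta\le\delta_0$; for that particular $\delta$, hypothesis (i) gives no bound at all on how far $E_\delta$ lies from $E$. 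Concretely, take $E=\partial B_{10}(0)\subset\mathbb{R}^{2\times2}$, so that $\operatorname*{Rco}\nolimits_fE=\overline{B_{10}(0)}$ and $\operatorname*{int}\operatorname*{Rco}\nolimits_fE=B_{10}(0)$; put $E_\mu=\partial B_{10-\mu}(0)$ for $\mu\le\tfrac12$ and $E_\mu=\{0\}$ for $\mu>\tfrac12$. All hypotheses hold, since (i) and (ii) only constrain small $\mu$. Yet for $\delta=\tfrac34$ (where the conclusion is a nontrivial constraint, the total weight being $1$) your procedure applies Theorem \ref{Rcofcharact} to $E_\delta=\{0\}$ and produces vertices carrying weight at least $1-\varepsilon$ near the origin, i.e.\ at distance about $10$ from $E$; no choice of $\varepsilon$ repairs this, because $\sup_{\eta\in E_\delta}\mathrm{dist}(\eta;E)=10>\delta$, so the ``gap'' you propose to take $\varepsilon$ below does not exist. (The conclusion of the proposition is still true in this example --- but not by your construction.)

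This is precisely why the paper's proof needs hypothesis (ii), which you explicitly dismiss as unnecessary. The paper first observes that $\xi\in\operatorname*{Rco}\nolimits_fE_\delta\subset\operatorname*{int}\operatorname*{Rco}\nolimits_fE$, and then uses (ii) to transfer $\xi$ into $\operatorname*{Rco}\nolimits_fE_{\mu}$ for \emph{all sufficiently small} $\mu$; it next chooses $\mu_0$ so small that (i) applies at scale $\delta/3$, giving $E_{\mu_0}\subset B_{\delta/3}(E)$, and only then invokes Theorem \ref{Rcofcharact} --- for $E_{\mu_0}$, not for $E_\delta$ --- with $U=\operatorname*{int}\operatorname*{Rco}\nolimits_fE$ and tolerance $\varepsilon=\delta/3$. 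Every vertex lying in $B_{\delta/3}(E_{\mu_0})$ then lies in $B_{2\delta/3}(E)\subset B_\delta(E)$, so the weight of the vertices outside $B_\delta(E)$ is less than $\delta/3<\delta$. Your choice of $U$, the verification that it is admissible, and the appeal to Theorem \ref{Rcofcharact} all coincide with the paper's; the missing idea is this intermediate passage, via (ii), from the fixed index $\delta$ to an arbitrarily small index $\mu_0$ at which hypothesis (i) actually bites.
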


\begin{proof}
Let $\delta>0$ and $\xi\in \operatorname*{Rco}_fE_\delta$. Since
$\operatorname*{Rco}_fE_\delta\subset\mathrm{int}\operatorname*{Rco}_fE$,
$\xi\in\mathrm{int}\operatorname*{Rco}_fE$ so, using $(ii)$, we conclude that $\xi\in
\operatorname*{Rco}_fE_{\mu}$ for all $\mu\le \mu_1$. Thus, by the characterization of
$\operatorname*{Rco}_fE_\mu$ stated in Theorem \ref{Rcofcharact} with $U=\mathrm{int}\operatorname*{Rco}_fE$,
for every $\mu\le \mu_1$ and for every $\varepsilon>0$, there exist $I\in \mathbb{N}$ and
$(\lambda_i,\eta_i)_{i=1,...,I}$ with  $\lambda_i>0,\ \sum_{i=1}^I\lambda_i=1$ satisfying $(H_I(U))$ and
such that
\begin{equation}\label{firstassertion}
\xi=\sum_{i=1}^I\lambda_i\eta_i,\qquad
\sum_{\substack{i=1\\ \eta_i\notin B_\varepsilon(E_{\mu})}}^I\lambda_i<\varepsilon.\end{equation}

On the other hand, by $(i)$, for every $\gamma>0$ there exists $\mu_2>0$ such that
\begin{equation}\label{secondassertion}
\mathrm{dist}(\eta;E)\le \gamma,\ \forall\ \eta\in E_\mu\text{ with }\mu\le \mu_2.
\end{equation}

Choosing in the previous conditions
$\gamma=\frac{\delta}{3},\ \mu_0\le \min\{\mu_1,\mu_2\},\ \varepsilon=\frac{\delta}{3}$
we conclude, by (\ref{firstassertion}), that there exist $I\in \mathbb{N}$ and
$(\lambda_i,\eta_i)_{i=1,...,I}$ with  $\lambda_i>0,\ \sum_{i=1}^I\lambda_i=1$ satisfying $(H_I(U))$
and such that
$$\xi=\sum_{i=1}^I\lambda_i\eta_i,\qquad
\sum_{\substack{i=1\\ \eta_i\notin B_{\delta/3}(E_{\mu_0})}}^I\lambda_i<\frac{\delta}{3}.$$

To obtain the desired condition, we observe that
$$\sum_{\substack{i=1\\ \eta_i\notin B_\delta(E)}}^I\lambda_i\le
\sum_{\substack{i=1\\ \eta_i\notin B_{2\delta/3}(E_{\mu_0})}}^I\lambda_i < \frac{2\delta}{3} < \delta.$$

Indeed, if $\eta_i\notin B_\delta(E)$ then $\mathrm{dist}(\eta_i; E)\ge \delta.$ From
(\ref{secondassertion}), $\mathrm{dist}(\eta;E)\le\frac{\delta}{3},\ \forall\ \eta\in E_{\mu_0}$.
This means that $E_{\mu_0}\subset B_{\delta/3}(E)$ and thus
$\mathrm{dist}(\eta_i;E_{\mu_0})\ge \frac{2\delta}{3}.$
\end{proof}

\section{Applications}\label{sectionexamples}

We will now recall some properties of isotropic sets and investigate similar properties when a
restriction on the sign of the determinant is considered. These results will be useful in the study of
some differential inclusions related with this type of sets which we present in
subsections \ref{section Isotropic Differential Inclusion} and
\ref{Differential inclusions for some SO(n) invariant sets}.

We start by giving the precise definition of isotropic set.

\begin{definition}
Let $E$ be a subset of $\mathbb{R}^{n\times n}$. We say $E$ is isotropic if
$RES\subseteq E$ for every $R, S$ in the orthogonal group
$\mathcal{O}(n)$.
\end{definition}

Isotropic sets can be easily described by means of the singular values of its matrices. Indeed, let
$0 \leq \lambda_1(\xi)\leq \cdots \leq \lambda_n(\xi)$ denote the singular values of the matrix $\xi$, that
is, the eigenvalues of the matrix $\sqrt{\xi \xi^t}$,
then the isotropic sets $E$ of $\mathbb{R}^{n\times n}$ are those which can be written in the form
\begin{equation}\label{isotropic}
E=\{\xi \in \mathbb{R}^{n\times n}: (\lambda_1(\xi),\cdots, \lambda_n(\xi))\in \Lambda_E\}\,,
\end{equation}
where $\Lambda_E$ is a set contained in
$\{(x_1, \cdots,x_n)\in \mathbb{R}^n: 0\leq x_1 \leq \cdots \leq x_n\}$.
This is a consequence of some properties of the singular values that we recall next.

The following decomposition holds (see \cite{HJ}): for every matrix $\xi\in\mathbb{R}^{n\times n}$
there exist $R,S \in \mathcal{O}(n)$ such that
\begin{equation}\label{matrix decomposition}
\xi= R \, {\rm diag}(\lambda_1(\xi), \cdots, \lambda_n(\xi)) S=R\left(\begin{array}{ccc}
\lambda_1(\xi) &  & \\
 & \ddots & \\
 & & \lambda_n(\xi)
\end{array}\right)S
\end{equation}
and, for every $\xi \in \mathbb{R}^{n \times n}$, $R, S \in \mathcal{O}(n)$
$$\lambda_i(\xi) = \lambda_i(R\xi S).$$

Moreover, one has
$$
\begin{array}{c}
\displaystyle \prod_{i=1}^n\lambda_i(\xi)=|\det \xi|
\qquad\text{and}\qquad
\displaystyle \sum_{i=1}^n(\lambda_i(\xi))^2=|\xi|^2.
\end{array}
$$
In particular, in the $2 \times 2$ case,
$\lambda_1$ and $\lambda_2$ are given by
$$
\begin{array}{l}
\displaystyle \lambda_1(\xi)=\frac 12
\left[\sqrt{|\xi|^2+2
|\det \xi|}-\sqrt{|\xi|^2-2 |\det \xi|}\right]
\vspace{0.15cm}
\\
\displaystyle \lambda_2(\xi)=\frac 12
\left[\sqrt{|\xi|^2+2
|\det \xi|}+\sqrt{|\xi|^2-2 |\det \xi|}\right].
\end{array}
$$

The functions $\lambda_i$ are continuous,
$\displaystyle \xi \to \prod_{i=k}^n\lambda_i(\xi)$ is
polyconvex for any $1 \leq k \leq n$ and $\lambda_n$ is a norm.
From this, clearly if the set $\Lambda_E$ in (\ref{isotropic}) is compact (respectively, open) then
$E$ is also compact (respectively, open).
On the other hand, if $E$ is compact the set $\Lambda_E$ can be taken to be compact and if
$E$ is open (\ref{isotropic}) holds for an open set $\Lambda_E \subset \mathbb{R}^n$.

In this section we will also be interested in sets of the form
\begin{equation}\label{isotropic with restriction}
E=\left\{\xi \in \mathbb{R}^{n\times n}: (\lambda_1(\xi),\cdots, \lambda_n(\xi))\in \Lambda_E,\ \det\xi\ge 0
\right\},
\end{equation}
where, as before, $\Lambda_E$ is a set contained in
$\{(x_1, \cdots,x_n)\in \mathbb{R}^n: 0\leq x_1 \leq \cdots \leq x_n\}$. We observe that these are not
isotropic sets, but just a class of $\mathcal{SO}(n)$ invariant sets, where $\mathcal{SO}(n)$ denotes
the special orthogonal group.

\begin{theorem}\label{Ercisotropic}
If $E \subseteq \mathbb{R}^{n\times n}$ has the form (\ref{isotropic with restriction}) for some compact set
$\Lambda_E$, then $\operatorname*{Rco}_f E$ has the same form, with $\Lambda_{\operatorname*{Rco}_f E}$
also compact.
\end{theorem}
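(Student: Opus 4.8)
The plan is to exploit the fact that the determinant sign constraint together with a singular value description is precisely an encoding of $\mathcal{SO}(n)$-invariance. First I would record that the given set $E$ is $\mathcal{SO}(n)$-invariant: for $R,S\in\mathcal{SO}(n)$ and $\xi\in E$ one has $\lambda_i(R\xi S)=\lambda_i(\xi)$ for all $i$ and $\det(R\xi S)=\det R\,\det\xi\,\det S=\det\xi\ge 0$, so $R\xi S\in E$. The whole argument then reduces to three facts about $F:=\operatorname*{Rco}_f E$: that $F$ is again $\mathcal{SO}(n)$-invariant, that $F\subseteq\{\det\ge 0\}$, and that these two properties force a singular value description of the desired form.

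For invariance I would fix $R,S\in\mathcal{SO}(n)$ and, for each rank one convex $f\in\mathcal{F}^E$, consider $g(\xi):=f(R\xi S)$. Since $\xi\mapsto R\xi S$ is linear and invertible it preserves rank one connections (i.e. $\operatorname*{rank}(\xi-\eta)=1$ implies $\operatorname*{rank}(R\xi S-R\eta S)=1$), so $g$ is again real-valued and rank one convex, and $g|_E\le 0$ because $R\xi S\in E$ whenever $\xi\in E$. Hence $g\in\mathcal{F}^E$, so $g\le 0$ on $F$; as this holds for every such $f$, we get $f(R\xi S)\le 0$ for all rank one convex $f\in\mathcal{F}^E$, i.e. $R\xi S\in F$. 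To bound the determinant I would use that $-\det$ is polyconvex, hence rank one convex, and is real-valued with $-\det\le 0$ on $E$; applying the definition of $F$ to $f=-\det$ gives $\det\xi\ge 0$ for every $\xi\in F$.

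The main work is the singular value reduction, which I expect to be the most delicate step. I would prove that every $\xi$ with $\det\xi\ge 0$ can be written as $\xi=\tilde R\,\operatorname*{diag}(\lambda_1(\xi),\dots,\lambda_n(\xi))\,\tilde S$ with $\tilde R,\tilde S\in\mathcal{SO}(n)$. Starting from the decomposition (\ref{matrix decomposition}) with $R,S\in\mathcal{O}(n)$ and $D=\operatorname*{diag}(\lambda_1(\xi),\dots,\lambda_n(\xi))$, the sign of $\det\xi$ is $\det R\,\det S$ times $\prod_i\lambda_i(\xi)$; when $\det\xi>0$ this forces $\det R\,\det S=1$, and inserting the reflection $P=\operatorname*{diag}(-1,1,\dots,1)$ via $\xi=(RP)(PDP)(PS)$, together with $PDP=D$, turns both orthogonal factors into special ones. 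When $\det\xi=0$ one has $\lambda_1(\xi)=0$, so $PD=DP=D$ and the signs of the two factors can be corrected independently. Granting this, if $\eta$ satisfies $\det\eta\ge 0$ and $(\lambda_1(\eta),\dots,\lambda_n(\eta))=(\lambda_1(\xi),\dots,\lambda_n(\xi))$ for some $\xi\in F$, then writing $\xi=R_1 D S_1$ and $\eta=R_2 D S_2$ with all factors in $\mathcal{SO}(n)$ yields $\eta=(R_2R_1^{-1})\,\xi\,(S_1^{-1}S_2)$, so $\mathcal{SO}(n)$-invariance gives $\eta\in F$. Setting $\Lambda_{\operatorname*{Rco}_f E}:=\{(\lambda_1(\xi),\dots,\lambda_n(\xi)):\xi\in F\}$, this implication combined with $F\subseteq\{\det\ge 0\}$ gives both inclusions in
$$\operatorname*{Rco}_f E=\{\xi\in\mathbb{R}^{n\times n}:(\lambda_1(\xi),\dots,\lambda_n(\xi))\in\Lambda_{\operatorname*{Rco}_f E},\ \det\xi\ge 0\},$$
which is the asserted form.

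Finally, for compactness I would note that $E$ is compact, being the intersection of the compact isotropic set $\{\xi:(\lambda_1(\xi),\dots,\lambda_n(\xi))\in\Lambda_E\}$ (compact since $\Lambda_E$ is) with the closed set $\{\det\ge 0\}$. By Remark \ref{remark Rco characterization} one then has $\operatorname*{Rco}_f E\subseteq\operatorname*{co}E$, so $F$ is bounded; since the hulls in Definition \ref{hullsfinite} are closed, $F$ is compact. As $\Lambda_{\operatorname*{Rco}_f E}$ is the image of the compact set $F$ under the continuous map $\xi\mapsto(\lambda_1(\xi),\dots,\lambda_n(\xi))$, it is compact as well (and automatically contained in $\{0\le x_1\le\cdots\le x_n\}$), completing the proof.
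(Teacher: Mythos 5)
Your proof is correct and takes essentially the same approach as the paper: $\mathcal{SO}(n)$-invariance of $\operatorname*{Rco}_f E$ obtained by composing rank one convex test functions with the linear maps $\xi\mapsto R\xi S$, rank one convexity of $-\det$ to get the determinant constraint, and the definition of $\Lambda_{\operatorname*{Rco}_f E}$ as the singular-value image of the hull. The only difference is that you spell out the singular value decomposition with special orthogonal factors (including the degenerate case $\det\xi=0$), a step the paper's proof uses implicitly.
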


\begin{proof}
Let $E\subseteq \mathbb{R}^{n\times n}$ be a set of the form (\ref{isotropic with restriction}) for some
compact set $\Lambda_E$. Then it is trivial to conclude that $\operatorname*{Rco}_f E$ is compact and
$\operatorname*{Rco}_f E\subseteq\{\xi\in\mathbb{R}^{n\times n}:\ \det \xi\ge 0\},$
this follows from the fact that $\xi \to -\det \xi$ is rank one convex.
We will show that
$$ {\operatorname*{Rco}}_f E = \left\{\xi \in \mathbb{R}^{n\times n}:\
(\lambda_1(\xi),\cdots, \lambda_n(\xi)) \in
\Lambda_{\operatorname*{Rco}_f E}, \, \det \xi\ge 0\right\}$$
where
$$ \Lambda_{\operatorname*{Rco}_f E} = \left\{x \in \mathbb{R}^n : x =
(\lambda_1(\xi),\cdots, \lambda_n(\xi))
\text{ for some } \xi \in {\operatorname*{Rco}}_f E \right\}.$$
Notice that, in particular, $\Lambda_{\operatorname*{Rco}_f E}$ is compact.
To achieve the desired representation of $\operatorname*{Rco}_f E$ we only need to show that if
$\xi\notin \operatorname*{Rco}_f E$ then for every
$R,S\in \mathcal{SO}(n)$ one has $R\xi S\notin \operatorname*{Rco}_f E$.
Let $\xi \notin \operatorname*{Rco}_f E$,
then there exists a rank one convex function
$f:\mathbb{R}^{n\times n}\to \mathbb{R}$ such that
$f|_{E}\leq 0$ and $f(\xi)>0.$
Let $R,S \in \mathcal{SO}(n)$ and define
$f_1(\eta):=f(R^{-1}\eta S^{-1}).$
Then $f_1$ is rank one convex
and for all $\eta \in E$,
$f_1(\eta)=f(R^{-1}\eta S^{-1})\leq 0,$
as $R^{-1}\eta S^{-1} \in E$. However $f_1(R\xi S)=f(\xi)>0$ and so
$R\xi S$ doesn't belong to $\operatorname*{Rco}_f E$.
\end{proof}

\subsection{Non-affine map with a finite number of gradients without rank one connections}
\label{KirchheimExample}

In \cite{Kirchheim}, Kirchheim proved the existence of non-affine maps with a finite number of values for the
gradient but
without rank one connections between them (see also the result obtained by Kirchheim and Preiss,
cf. \cite[Corollary 4.40]{Kirchheim-notes}, where a non-affine map whose gradient takes five possible
values not rank one connected was
constructed). Kirchheim's result is the following.

\begin{theorem}\label{teoexitngradientes} Let $N,n\ge 2$, $m \in \mathbb{N}$ and
$\Omega\subset\mathbb{R}^n$ be a bounded open set. Then there is a set
$E=\{\xi_1,...,\xi_m\}\subset\mathbb{R}^{N\times n}$ such that
 $$\operatorname*{rank}(\xi_i-\xi_j)=\min\{N,n\},\text{ if }i\neq j$$ and there are $\xi\notin E$ and $u\in
u_\xi+W_0^{1,\infty}(\Omega;\mathbb{R}^N)$ such that
$$Du(x)\in E,\ a.e.\ x\in\Omega,$$ where $u_\xi$ represents a map such that $Du_\xi=\xi$.
\end{theorem}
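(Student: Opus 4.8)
The plan is to realize Kirchheim's theorem as a corollary of the machinery developed in the paper, rather than reconstructing his original explicit construction. The theorem asks for a finite set $E=\{\xi_1,\dots,\xi_m\}$ of matrices, pairwise rank-one disconnected, admitting a non-affine solution to the differential inclusion $Du\in E$ with affine boundary data $u_\xi$ for some $\xi\notin E$. The natural strategy is to apply the abstract existence theorem (Theorem \ref{abstract existence theorem}) together with the sufficient condition for the relaxation property established in Theorem \ref{sufficientconditionforrelaxation} (or Corollary \ref{corol1}). Thus the real task is twofold: first, produce a concrete finite set $E$ with no rank-one connections whose hull $\operatorname*{Rco}_f E$ is genuinely larger than $E$ itself (so that $\operatorname*{int}\operatorname*{Rco}_f E\neq\emptyset$ and contains a point $\xi\notin E$); and second, verify that the hypotheses needed to invoke the relaxation property hold for this $E$.

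First I would exhibit an explicit family of matrices. For instance, in the $2\times 2$ case one can take the vertices of the standard $T_m$-configuration (or the five matrices of the Kirchheim--Preiss example), chosen so that $\operatorname*{rank}(\xi_i-\xi_j)=2$ for all $i\neq j$ while the set still admits nontrivial rank-one convex combinations through intermediate matrices lying outside $E$. The key point, which must be checked, is that $\operatorname*{Rco}_f E$ has nonempty interior: even though $E$ has no internal rank-one connections, its rank-one convex hull can be built by passing through auxiliary matrices $A,B$ that are rank-one connected to points already in the hull. This is exactly the phenomenon encoded in the property $(H_I(U))$ of Definition \ref{defH_I}, and it is why the characterization (\ref{rcof_with_property}) of Theorem \ref{Rcofcharact} is the right tool: the ``chain'' structure allows one to reach interior points of the hull without any two elements of $E$ being rank-one connected.

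Next I would verify the relaxation property for $K=\operatorname*{Rco}_f E$ with respect to $E$. Since $E$ is finite, hence compact and bounded, the most economical route is to apply Corollary \ref{corol1}: I would produce compact approximating sets $E_\delta$ with $\operatorname*{Rco}_f E_\delta\subset\operatorname*{int}\operatorname*{Rco}_f E$ satisfying conditions $(i)$ and $(ii)$ of Theorem \ref{sufficientconditionforrelaxation}, typically by slightly shrinking the configuration toward its ``center'', so that $E_\delta\to E$ in the sense of $(i)$ while $(ii)$ holds by openness of the interior. Alternatively, one can verify condition $(H)$ of Corollary \ref{corol2} directly by exhibiting, for each $\xi\in\operatorname*{int}\operatorname*{Rco}_f E$ away from $E$, an explicit finite chain of rank-one moves $\eta_1,\dots,\eta_J$ staying inside $\operatorname*{int}\operatorname*{Rco}_f E$ and terminating near $E$; the combinatorial structure of the $T_m$-configuration makes such chains available with uniformly bounded length $L$.

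With the relaxation property in hand, the conclusion follows immediately: choosing any $\xi\in\operatorname*{int}\operatorname*{Rco}_f E\setminus E$ and the affine map $u_\xi$ with $Du_\xi=\xi$, Theorem \ref{abstract existence theorem} applied with $K=\operatorname*{int}\operatorname*{Rco}_f E$ yields a (dense set of) $u\in u_\xi+W^{1,\infty}_0(\Omega;\mathbb{R}^N)$ with $Du\in E$ a.e.; since $\xi\notin E$, the solution $u$ cannot be affine, which is precisely the assertion. The main obstacle, I expect, is the verification that $\operatorname*{int}\operatorname*{Rco}_f E\neq\emptyset$ for a finite rank-one disconnected set and, simultaneously, that the approximation-type conditions $(i)$--$(ii)$ (or the chain condition $(H)$) genuinely hold for the chosen configuration. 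This is where the geometry of the specific matrices matters, and where one must lean on the explicit structure of a $T_m$- or Kirchheim--Preiss-type configuration rather than on soft arguments; everything after that is a routine application of the abstract existence machinery.
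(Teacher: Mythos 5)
Your high-level skeleton does coincide with the paper's: both reduce the theorem to the relaxation property for a suitable open bounded set $K$, verified via Corollary \ref{corol2} (or \ref{corol1}), and then conclude with Theorem \ref{abstract existence theorem}. But there is a genuine gap at the step you yourself flag as the crux, and your proposed resolution would fail. The vertices of the standard $T_4$-configuration are diagonal matrices; since $\xi\mapsto|\xi_{12}|$ and $\xi\mapsto|\xi_{21}|$ are convex, hence rank one convex, and vanish on such a set $E$, the hull $\operatorname*{Rco}_f E$ is contained in the subspace of diagonal matrices and therefore has \emph{empty} interior in $\mathbb{R}^{2\times 2}$: there is no $\xi\in\operatorname*{int}\operatorname*{Rco}_f E\setminus E$ to serve as boundary datum. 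Worse, by the four-gradient rigidity theorem of Chleb\'{\i}k and Kirchheim, every Lipschitz map with $Du\in T_4$ a.e. on a connected open set is affine, so \emph{no} bounded open $K$ whatsoever can have the relaxation property with respect to $T_4$; in particular the ``uniformly bounded chains'' you attribute to the combinatorial structure of $T_m$-configurations cannot exist (this is exactly why five matrices, not four, are needed in dimension two). For the Kirchheim--Preiss example you would still owe a proof both that the relevant hull has nonempty interior and that rank one chains of uniformly bounded length reach $B_\delta(E)$ while staying in an open set; nothing in your outline supplies either, and the theorem moreover requires arbitrary $N,n\ge 2$, arbitrary $m$, and differences of full rank $\min\{N,n\}$, which your two-dimensional candidates do not address.

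The ingredient your proposal is missing is the structural input the paper takes from Kirchheim's construction, recorded in Lemma \ref{lemaKirchheim}: each $\xi\in E$ comes equipped with a finite set $\mathcal{M}_\xi\subset B_{\frac{1}{2}}(0)$ of matrices that \emph{are} rank one connected to $\xi$, with $\#\mathcal{M}_\xi<4Nn$, such that the open cones $\operatorname*{int}\operatorname*{co}(\{\xi\}\cup\mathcal{M}_\xi)$ cover $\partial B_{\frac{1}{2}}(0)$. The paper then works with the explicit set $K=B_{\frac{1}{2}}(0)\cup\bigcup_{\xi\in E}\operatorname*{int}\operatorname*{co}(\{\xi\}\cup\mathcal{M}_\xi)$ --- it never computes $\operatorname*{Rco}_f E$ at all; the inclusion $K\subset\operatorname*{Rco}_f E$ comes for free afterwards by Remark \ref{KsubsetRcofE} --- and verifies condition $(H)$ of Corollary \ref{corol2} by hand: a point of a cone is a convex combination of $\xi$ and the $\mu_j$'s, the dominant rank one direction $\xi-\mu_{j^{\ast}}$ admits a step whose size is bounded below by a constant $C>0$ independent of the point, and successive steps can be signed so that $|\eta+\eta_1+\cdots+\eta_i|\ge\sqrt{i}\,C$, whence boundedness of $K$ forces every chain into $B_\delta(E)$ after a uniformly bounded number of steps. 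It is precisely the rank one cones attaching each $\xi_i$ to the ball that let chains starting in the ``bulk'' terminate near $E$; without that structure the chain condition fails, as the $T_4$ example shows. So the gap is not a matter of routine verification: it is the absence of the one construction (Kirchheim's) that makes the abstract machinery applicable.
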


This theorem was obtained thanks to an abstract result also due to Kirchheim (cf. \cite[Theorem
5]{Kirchheim}). What we want to show in this section is that the same result can also be achieved by the Baire
categories method, cf. Theorem \ref{abstract existence theorem}. Evidently, since, as described in the
statement of the
theorem, the elements of the set $E$ are not rank one connected, the gradient of the affine boundary data
$\xi$ does not belong to $\operatorname*{Rco}E=E$. Therefore, to prove the relaxation property required to
apply Theorem \ref{abstract existence theorem}, one cannot use the usual approximation
property (cf. Definition \ref{Approximation property}).
However, as we will see, this difficulty can be overcome
by means of Corollary \ref{corol2}. Indeed, the set $E$ constructed by
Kirchheim is such
that the gradient of the boundary data $\xi$ belongs to the interior of
$\operatorname*{Rco}\nolimits_{f} E$ (cf. Remark \ref{KsubsetRcofE}).

We recall in the following lemma the properties of the set $E$ constructed by Kirchheim. For the construction
of the set we refer again to \cite{Kirchheim}.

\begin{lemma}\label{lemaKirchheim} Let $N,n\ge 2$ and denote by $B_{\frac{1}{2}}(0)$ the open ball of
$\mathbb{R}^{N\times n}$ centered at $0$ and with radius $\frac{1}{2}$. Then there exists a set
$E=\{\xi_1,...,\xi_m\}\subset\mathbb{R}^{N\times n}$, $m\in\mathbb{N}$, such that
$$\operatorname*{rank}(\xi_i-\xi_j)=\min\{N,n\},\text{ if }i\neq j$$
and $\mathrm{dist}(\xi;B_{\frac{1}{2}}(0))> 0,$ for every
$\xi\in E$. Moreover, for every $\xi\in E$ there exists
$\mathcal{M}_\xi\subset \mathbb{R}^{N\times n}$ such that
\begin{itemize}
\item[i)] $\mathcal{M}_\xi\subset\xi+\{\mu\in\mathbb{R}^{N\times n}
:\ \operatorname*{rank}\mu=1\}$
\item[ii)]
$\mathcal{M}_\xi\subset B_{\frac{1}{2}}(0),\ \# \mathcal{M}_\xi<4Nn$
\item [iii)] $\partial B_{\frac{1}{2}}(0)\subset
\bigcup_{\xi\in E}\operatorname*{int}(\operatorname*{co}(\{\xi\}\cup\mathcal{M}_\xi)).$
\end{itemize}
\end{lemma}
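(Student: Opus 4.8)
The plan is to take the set $E$ directly from Kirchheim's construction in \cite{Kirchheim} and to read off the quantitative geometric data encoded in i)--iii), rather than to reconstruct it from scratch. The rank-one incompatibility $\operatorname*{rank}(\xi_i-\xi_j)=\min\{N,n\}$ for $i\neq j$ is already part of the conclusion of Theorem \ref{teoexitngradientes}, and since it is invariant under invertible affine changes of the matrix variable, it survives any normalization performed below. Thus the only genuine work is to exhibit the families $\mathcal{M}_\xi$ and to verify the three displayed properties.

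First I would fix the normalization. In Kirchheim's argument the non-affine solution is produced from a finite hierarchy of rank-one laminations whose outermost nodes are the matrices $\xi_1,\dots,\xi_m$ and whose barycentre is the gradient of the affine boundary datum. Composing with a translation sending this barycentre to the origin and with a suitable dilation, I would arrange that the first generation of laminated matrices is contained in the open ball $B_{\frac12}(0)$ while the outer nodes satisfy $|\xi_i|>\frac12$; this immediately gives $\mathrm{dist}(\xi;B_{\frac12}(0))>0$ for every $\xi\in E$. All the rank-one relations are preserved, so the normalized set still satisfies the first assertion of the lemma.

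Next I would define, for each $\xi\in E$, the family $\mathcal{M}_\xi$ to be the set of matrices reached from $\xi$ by a single rank-one step towards the interior in Kirchheim's laminate. Property i) is then automatic, since each such matrix has the form $\xi+\mu$ with $\operatorname*{rank}\mu=1$. Property ii) follows because these first-generation nodes lie in the interior region $B_{\frac12}(0)$ by the choice of normalization, while the cardinality bound $\#\mathcal{M}_\xi<4Nn$ is a direct count of the rank-one directions used at each vertex of the construction and does not require any new estimate.

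The step I expect to be the real obstacle is property iii), the covering of the sphere $\partial B_{\frac12}(0)$ by the open sets $\operatorname*{int}(\operatorname*{co}(\{\xi\}\cup\mathcal{M}_\xi))$; this is exactly the geometric content that later feeds into the verification of hypothesis $(H)$ of Corollary \ref{corol2}. Here I would use that, by construction, each polytope $\operatorname*{co}(\{\xi\}\cup\mathcal{M}_\xi)$ is full-dimensional in $\mathbb{R}^{N\times n}$ (its vertices affinely span the space), with apex $\xi$ strictly outside the ball and remaining vertices strictly inside, so its interior meets the sphere in a nonempty relatively open set. The key point to be extracted from \cite{Kirchheim} is that these hulls cover a full closed neighbourhood of the barycentre $0$ — precisely what allows a laminate supported on $E$ to have an interior barycentre — and, after the dilation, this forces the sphere $\partial B_{\frac12}(0)$ to lie in their union. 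A compactness argument on $\partial B_{\frac12}(0)$, together with the openness of each $\operatorname*{int}(\operatorname*{co}(\{\xi\}\cup\mathcal{M}_\xi))$, then upgrades the pointwise covering to the stated inclusion, completing the verification.
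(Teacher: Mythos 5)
The first thing to note is that the paper does not actually prove this lemma: it is explicitly presented as a recollection of the properties of the set constructed by Kirchheim, and for the construction the authors simply refer to \cite{Kirchheim}. So your overall strategy --- import the set $E$ and its laminate structure from Kirchheim's paper and read off i)--iii) --- coincides with the paper's treatment, and as a pure citation there would be nothing to object to.

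However, your proposal presents itself as a derivation, and the reasoning you add is unsound at the one point you yourself identify as the real obstacle, namely property iii). You claim that Kirchheim's hulls cover a full closed neighbourhood of the barycentre $0$, justifying this with the phrase that it is ``precisely what allows a laminate supported on $E$ to have an interior barycentre''; that inference is backwards. For a laminate supported on $E$ to have barycentre $0$ one only needs $0$ to admit an $(H_I)$-type splitting with most of the mass near $E$, which is far weaker than the union $\bigcup_{\xi\in E}\operatorname{co}(\{\xi\}\cup\mathcal{M}_\xi)$ containing a ball around $0$; moreover, since each $\mathcal{M}_\xi$ lies strictly inside $B_{\frac{1}{2}}(0)$ and each apex $\xi$ strictly outside, the union of these finitely many polytopes is a neighbourhood of an annular region near the sphere, and nothing forces it to contain $0$ or any ball around $0$. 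What iii) asserts --- and what must be extracted from the construction itself, not deduced abstractly --- is the covering of $\partial B_{\frac{1}{2}}(0)$. Your closing ``compactness argument'' is also vacuous: the stated inclusion already is a pointwise covering by finitely many open sets, so there is nothing to upgrade. Similar remarks apply to the quantitative claims you treat as bookkeeping: the bound $\#\mathcal{M}_\xi<4Nn$, and the existence of a single dilation placing every $\mathcal{M}_\xi$ inside $B_{\frac{1}{2}}(0)$ while every $\xi\in E$ stays strictly outside (this requires a uniform spherical gap between all inner nodes and all outer apexes), are exactly the content of Kirchheim's construction and cannot be obtained from the normalization you sketch. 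Two smaller points: rank of differences is preserved by translations and homotheties, but not by arbitrary ``invertible affine changes of the matrix variable'' as you assert; and invoking the conclusion of Theorem \ref{teoexitngradientes} to obtain the rank condition is circular within this paper, since here that theorem is deduced from the present lemma --- it is only Kirchheim's independent proof, i.e.\ once more the citation, that rescues the logic.
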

\medskip

We now prove Theorem \ref{teoexitngradientes}, for the set $E$ considered in the previous lemma,
using the Baire categories method.
This proof was already obtained by Ribeiro in \cite{Ribeiro-thesis} but we include it here for
the sake of completeness.
\medskip

\noindent{\bf Proof of Theorem \ref{teoexitngradientes}.}
We consider a set $E$ with the properties of the above lemma and, with the same notations of the lemma, we
define
$$\displaystyle{K=B_{\frac{1}{2}}(0)\cup\bigcup_{\xi\in E}\operatorname*{int}(\operatorname*{co}(\{\xi\}
\cup\mathcal{M}_\xi))}.$$
Notice that $K$ is an open and bounded set. Since $K\setminus E$ is non empty (for instance, it
contains $0$) let $\overline{\xi} \in K\setminus E$. We will show that $K$ has the relaxation
property with respect to $E$ by applying Corollary \ref{corol2}.
Once this is proved, we conclude by Theorem \ref{abstract existence theorem} that there exists
$u\in u_{\overline{\xi}}+W_0^{1,\infty}(\Omega;\mathbb{R}^N)$ such that $Du\in E$ with
$\overline{\xi}\notin E$.

We only need to ensure condition $(H)$ of Corollary \ref{corol2}. Let $\delta>0$ and
$\eta\in K\setminus B_\delta(E)$. If
$\eta\in B_{\frac{1}{2}}(0)$, by definition of $K$ and condition $iii)$ of Lemma \ref{lemaKirchheim}
one can easily reduce to the case
$\eta\in\operatorname*{int}(\operatorname*{co}(\{\xi\}\cup\mathcal{M}_\xi))\setminus
B_{\frac{1}{2}}(0)$, with $\xi\in E$, moving along any rank one direction.

Consider now the case where
$\eta \in \operatorname*{int}(\operatorname*{co}(\{\xi\}\cup\mathcal{M}_\xi))
\setminus B_{\frac{1}{2}}(0)$, for some $\xi\in E$.
In this case we can write
$$\eta=\sum_{j=1}^{k}\lambda_j\mu_j+\left(1-\sum_{j=1}^{k}\lambda_j\right)\xi,$$
for some $\lambda_j\in(0,1)$ such that
$\displaystyle{\sum_{j=1}^{k}\lambda_j<1}$ and for some $\mu_j\in\mathcal{M}_\xi$. By condition $ii)$ of
Lemma~\ref{lemaKirchheim} we have $k< 4Nn$.

Let $j^{\ast}\in\{1,...,k\}$ be such that
$\lambda_{j^{\ast}}|\xi-\mu_{j^{\ast}}|=\max_{1\le j\le k}\lambda_j|\xi-\mu_j|$
and consider the rank one direction $\xi-\mu_{j^{\ast}}$. We will show that it is possible to find
$c>0$, independent of $\eta$, such that $\eta_1=c(\xi-\mu_{j^{\ast}})$
satisfies $[\eta-\eta_1,\eta+\eta_1]\subset\operatorname*{int}K$.
In particular, since $\mathrm{dist}(\xi;B_{\frac{1}{2}}(0))> 0,$ for every $\xi\in E$, and
$\mu_j \in B_{\frac{1}{2}}(0)$ by condition $ii)$ of Lemma~\ref{lemaKirchheim}, it follows that,
for some $C>0$ independent of $\eta$ and $\xi$,
\begin{equation}\label{eta1}
|\eta_1| \ge C.
\end{equation}

To find the constant $c$ we proceed in the following way. We notice that, for
$$|t|<\min\left\{\lambda_{j^{\ast}}, 1-\displaystyle{\sum_{j=1}^{k}\lambda_j}\right\},$$
$$\eta+t(\xi-\mu_{j^{\ast}})=\sum_{\substack{j=1\\ j\neq
j^{\ast}}}^k\lambda_j\mu_j+(\lambda_{j^{\ast}}-t)\mu_{j^{\ast}}+
\left(1-\sum_{j=1}^{k}\lambda_j+t\right)\xi\in
\operatorname*{int}(\operatorname*{co}(\{\xi\}\cup\mathcal{M}_\xi)).$$
Thus we only need to show that
$$\min\left\{\lambda_{j^{\ast}},
1-\displaystyle{\sum_{j=1}^{k}\lambda_j}\right\}\ge c>0.$$
The estimate for $\lambda_{j^{\ast}}$ follows from $$\delta<|\eta-\xi|\le
\sum_{j=1}^{k}\lambda_j|\mu_j-\xi|\le 4Nn
\lambda_{j^{\ast}}|\mu_{j^\ast}-\xi|\le 4Nn
\lambda_{j^{\ast}}\max_{\substack{\xi\in
E\\ \mu\in\mathcal{M}_\xi}}|\mu-\xi|.$$
On the other hand, since
\begin{eqnarray*}
\frac{1}{2} & \le & \left| \eta\right|\le
\sum_{j=1}^{k}\lambda_j\left|\mu_j\right|+\left(1-\sum_{j=1}^{k}\lambda_j\right)\left|\xi\right|\le \\
&\le& \left(\max_{\substack{\xi\in
E\\ \mu\in\mathcal{M}_\xi}}\left|\mu\right|\right)
\sum_{j=1}^{k}\lambda_j+\left(1-\sum_{j=1}^{k}\lambda_j\right)\max_{\xi\in E}|\xi|,
\end{eqnarray*}
one gets
$$\displaystyle 1-\sum_{j=1}^{k}\lambda_j\ge\frac{\frac{1}{2}-\underset{\substack{\xi\in
E\\ \mu\in\mathcal{M}_\xi}}\max \left|\mu\right|}{\underset{\xi\in
E}\max \left|\xi\right|-\underset{\substack{\xi\in
E\\ \mu\in\mathcal{M}_\xi}}\max \left|\mu\right|}>0$$ as wished.

We argue that repeating the same reasoning with the matrix $\eta+\eta_1$ and so on,
after $i$ iterations of this procedure,
we obtain a sequence of rank one matrices
$\eta_1,...,\eta_i$ satisfying
$[\eta + \eta_1 + \cdots +\eta_{j-1} - \eta_j,\eta + \eta_1 + \cdots +\eta_{j-1} + \eta_j]
\subseteq \operatorname*{int}K$, $j = 1, \cdots, i$
and $\eta+\eta_1+...+\eta_i\in B_{\delta}(E)$ where $i\le L(\delta,E,K)$ is
independent of $\eta$.

Indeed, without loss of generality, assume that
$\left|\eta+\eta_1\right|\ge \left|\eta-\eta_1\right|$.
Then it follows that
$$\left|\eta+\eta_1\right|\ge C$$
since, by (\ref{eta1}),
$$2\left|\eta+\eta_1\right|^2\ge
\left|\eta+\eta_1\right|^2+\left|\eta-\eta_1\right|^2=2\left|\eta\right|^2+2\left|\eta_1\right|^2
\ge 2 C^2.$$

If $\eta+\eta_1\notin B_{\delta}(E)$ we obtain, as before, $\eta_2$ such that $\left|\eta_2\right|\ge C$ and
$[\eta+\eta_1-\eta_2,\eta+\eta_1+\eta_2]\subset \operatorname*{int} K=K$.
Again, assuming that $\left|\eta+\eta_1+\eta_2\right|\ge \left|\eta+\eta_1-\eta_2\right|$, one has
$$2\left|\eta+\eta_1+\eta_2\right|^2\ge
\left|\eta+\eta_1+\eta_2\right|^2+\left|\eta+\eta_1-\eta_2\right|^2
=2\left|\eta+\eta_1\right|^2+2\left|\eta_2\right|^2\ge 4 C^2.$$

After $i$ iterations of this procedure one gets
$\eta+\eta_1+...+\eta_i\in \operatorname*{int} K=K$ with
$$
\left|\eta+\eta_1+...+\eta_i\right|\ge \sqrt{i} C.
$$
Thus, $\left|\eta+\eta_1+...+\eta_i\right|\to+\infty$, as $i \to +\infty$, contradicting the
fact that $K$ is bounded. Therefore, for some $i$ bounded by a constant $L=L(\delta,E,K)$,
we must have $\eta+\eta_1+...+\eta_i\in K\cap B_{\delta}(E)$.

This concludes the proof of condition $(H)$ of Corollary \ref{corol2} and thus the proof.$\hfill\Box$

\subsection{Isotropic differential inclusion}\label{section Isotropic Differential Inclusion}

In this section we discuss the differential inclusion problem
\begin{equation}\label{differential inclusion isotropic}
\left\{
\begin{array}{ll}
D u(x) \in E, \,\, &\mbox{a.e. } x \in \Omega,
\\
u(x)=\varphi(x), \,\, & x \in \partial \Omega,
\end{array}
\right.
\end{equation}
where $\Omega$ is an open bounded subset of $\mathbb{R}^n$ and $E$ is a
compact subset of $\mathbb{R}^{n\times n}$ which is isotropic, that is to say, invariant under orthogonal
transformations.

We observe that a result due to Dacorogna and Marcellini \cite[Theorem 7.28]{Dac-Marc} provides a sufficient
condition for existence of solutions to this problem.  Indeed, denoting by
$\lambda_1(\xi)\le\lambda_2(\xi)\le\cdots\le \lambda_n(\xi)$ the
singular values of $\xi\in\mathbb{R}^{n\times n}$, if there exists $\eta\in E$ with
$\lambda_i(\eta)=\gamma_i>0$ and
$\varphi\in C^1_{piec}(\overline{\Omega};\mathbb{R}^n)$ is such that
$$D\varphi\in E\cup \left\{\xi\in\mathbb{R}^{n\times n}:\
\prod_{i=\tau}^n\lambda_i(\xi)<\prod_{i=\tau}^n\gamma_i,\
\tau=1,...,n\right\}$$ then (\ref{differential inclusion isotropic}) has
$W^{1,\infty}(\Omega; \mathbb{R}^n)$ solutions.

In the 2 dimensional case ($n=2$), a less restrictive condition can be obtained, although it is more
difficult to check
in concrete examples. This was studied by Croce \cite{Croce} (see also \cite{Crocethesis}), using the Baire
categories method
that we discussed in Section \ref{sectionsufficientconditions}, and by Barroso, Croce and Ribeiro \cite{BCR}
using the convex
integration method due to M{\"u}ller and {\v{S}}ver{\'a}k \cite
{Muller-Sverak1996,Muller-Sverakcounterexamples}. With both
methods, the result obtained was the following.

\begin{theorem}\label{existence theorem compact isotropic}
Let $E:=\{\xi \in \mathbb{R}^{2\times 2}: (\lambda_1(\xi),\lambda_2(\xi))\in \Lambda_E \},$
where
$\Lambda_E\subset\{(x,y) \in \mathbb{R}^2: 0<x \leq y\}$ is a compact set and let
$$K= \left\{\xi \in \mathbb{R}^{2\times 2}:\ f_{\theta}(\lambda_1(\xi),\lambda_2(\xi))<
\max\limits_{(a,b) \in \Lambda_E}f_{\theta}(a,b),
\,\forall\,\,\theta \in [0,\max\limits_{(a,b)\in \Lambda_E}b]\right\},$$
where $f_{\theta}(x,y) := xy + \theta(y-x).$ Then, if $\Omega\subset \mathbb{R}^2$ is a bounded open set and if
$\varphi \in C^1_{piec}(\overline{\Omega}, \mathbb{R}^2)$ is such that
$D \varphi \in E \cup K$ a.e. in $\Omega$, there exists
a map $u\in \varphi+ W^{1,\infty}_0(\Omega, \mathbb{R}^2)$ such that
$Du \in E$ a.e. in $\Omega$.
\end{theorem}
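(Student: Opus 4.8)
The plan is to deduce the statement from the abstract existence theorem (Theorem \ref{abstract existence theorem}), taking as the set $K$ \emph{there} the hull $\operatorname*{int}\operatorname*{Rco}_f E$ rather than the set $K$ of the present statement. Concretely, I would establish two facts. First, that the set $K$ defined through the functions $f_\theta$ satisfies $K\subseteq\operatorname*{int}\operatorname*{Rco}_f E$, so that the hypothesis $D\varphi\in E\cup K$ forces $D\varphi\in E\cup\operatorname*{int}\operatorname*{Rco}_f E$ a.e. in $\Omega$. Second, that $\operatorname*{int}\operatorname*{Rco}_f E$ enjoys the relaxation property with respect to $E$. Since $E$ is compact, $\operatorname*{Rco}_f E$ is compact (it is closed and, by Remark \ref{remark Rco characterization}, contained in $\operatorname*{co}E$), so $\operatorname*{int}\operatorname*{Rco}_f E$ is a bounded open set; the last assertion of Theorem \ref{abstract existence theorem} then permits the datum $\varphi$ to be taken merely in $C^1_{piec}(\overline{\Omega};\mathbb{R}^2)$, exactly as required.

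For the first fact I would invoke the characterization of $\operatorname*{Rco}_f E$ for compact isotropic subsets of $\mathbb{R}^{2\times 2}$ due to Cardaliaguet and Tahraoui \cite{CT}, which asserts that $\xi\in\operatorname*{Rco}_f E$ if and only if $f_\theta(\lambda_1(\xi),\lambda_2(\xi))\le \max_{(a,b)\in\Lambda_E}f_\theta(a,b)$ for every $\theta\in[0,\max_{(a,b)\in\Lambda_E}b]$. The set $K$ is precisely the locus where all these inequalities are strict; using the continuity of the singular values $\lambda_1,\lambda_2$ and the joint continuity of $(\theta,\xi)\mapsto f_\theta(\lambda_1(\xi),\lambda_2(\xi))$ over the compact range of admissible $\theta$, one checks that $K$ is open, whence $K\subseteq\operatorname*{int}\operatorname*{Rco}_f E$. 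I stress that this step uses only the \emph{description} of $\operatorname*{Rco}_f E$ and not the coincidence $\operatorname*{Rco}_f E=\operatorname*{Rco}E$ established in \cite{Croce}; this is exactly the flexibility that the results of Section \ref{sectionsufficientconditions} provide.

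For the second fact I would apply Corollary \ref{corol1}, so I must produce compact sets $E_\delta$, $\delta>0$, with $\operatorname*{Rco}_f E_\delta\subset\operatorname*{int}\operatorname*{Rco}_f E$ for which conditions $(i)$ and $(ii)$ of Theorem \ref{sufficientconditionforrelaxation} hold. A natural choice is to take $E_\delta$ isotropic, generated by a compact $\Lambda_{E_\delta}$ obtained from $\Lambda_E$ by a $\delta$-controlled shrinking, so that $\Lambda_{E_\delta}\to\Lambda_E$ and condition $(i)$ follows. As each $E_\delta$ is again compact and isotropic, \cite{CT} applies once more and describes $\operatorname*{Rco}_f E_\delta$ by the same family of inequalities, now with thresholds $\max_{\Lambda_{E_\delta}}f_\theta$. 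Condition $(ii)$ — that every $\eta\in\operatorname*{int}\operatorname*{Rco}_f E$ lies in $\operatorname*{Rco}_f E_\delta$ for all small $\delta$ — then reduces to the uniform convergence $\max_{\Lambda_{E_\delta}}f_\theta\to\max_{\Lambda_E}f_\theta$ as $\delta\to 0$, uniformly in $\theta$ over the compact admissible interval, combined with the strictness of the inequalities satisfied by a point $\eta$ of the interior.

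The step I expect to be the main obstacle is precisely this construction: designing $\Lambda_{E_\delta}$ and proving simultaneously that the hulls $\operatorname*{Rco}_f E_\delta$ remain inside $\operatorname*{int}\operatorname*{Rco}_f E$ and exhaust it as $\delta\to 0$. The two requirements pull in opposite directions — $E_\delta$ must approach $E$ while its hull must eventually cover all of $\operatorname*{int}\operatorname*{Rco}_f E$ — and reconciling them rests entirely on the explicit \cite{CT} description together with the uniform (in $\theta$) control of the support-type quantities $\theta\mapsto\max_{\Lambda}f_\theta$. Once Corollary \ref{corol1} delivers the relaxation property of $\operatorname*{int}\operatorname*{Rco}_f E$ with respect to $E$, the conclusion is immediate from Theorem \ref{abstract existence theorem}.
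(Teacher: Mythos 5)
Your proposal follows the paper's proof essentially step for step: the paper likewise identifies the theorem's $K$ with $\operatorname*{int}\operatorname*{Rco}\nolimits_{f}E$ via the Cardaliaguet--Tahraoui description (citing \cite{CT} and \cite{Croce}), reduces the relaxation property to Corollary \ref{corol1} plus Theorem \ref{abstract existence theorem}, and takes as approximating sets exactly the $\delta$-shrunken isotropic sets $E_\delta$ generated by $\Lambda_{E_\delta}=\left\{(a-\delta,b-\delta):(a,b)\in\Lambda_E\right\}$, deferring the verification of conditions $(i)$ and $(ii)$ to \cite{Croce}. So your approach is correct and essentially identical to the paper's, including the step you flag as the main obstacle, which the paper also does not carry out in detail.
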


We notice that it turns out that
$K=\operatorname*{int}\operatorname*{Rco}E=\operatorname*{int}\operatorname*{Rco}_fE$. To
achieve the previous theorem two fundamental results due to Cardaliaguet and Tahraoui \cite{CT} were used. On
one hand, they
characterized the polyconvex hull of any set $E$ as in the theorem; based on their description,
Croce \cite{Croce} then showed that
$$\operatorname*{Pco}E=\left\{\xi \in \mathbb{R}^{2\times 2}:\ f_{\theta}(\lambda_1(\xi),\lambda_2(\xi))\le
\max\limits_{(a,b) \in \Lambda_E}f_{\theta}(a,b),
\,\forall\,\,\theta \in [0,\max\limits_{(a,b)\in \Lambda_E}b]\right\}.$$
On the other hand, Cardaliaguet and Tahraoui \cite{CT} showed that, in dimension 2, compact isotropic sets
which are rank one convex are also polyconvex (see also \cite{CDLMR}).
Since the hull $\operatorname*{Rco}_fE$  of a compact isotropic set $E$ is also
compact and isotropic (cf. Theorem \ref{Ercisotropic}) and since it is also rank one convex,
one immediately obtains that
$\operatorname*{Rco}_fE=\operatorname*{Pco}E$ and thus a characterization for $\operatorname*{Rco}_fE$. This
was fundamental
to study the differential inclusion by means of the convex integration method. Indeed, to prove the required
in-approximation
it is necessary to know the hull $\operatorname*{Rco}_fE$. In the prior work of Croce \cite{Croce}, where the
Baire categories method was used via the approximation property, the appropriate hull to consider was
$\operatorname*{Rco}E$. Contrary to the case of $\operatorname*{Rco}_fE$,
a characterization of $\operatorname*{Rco}E$ does not follow immediately from Cardaliaguet and
Tahraoui's results since, in general, $\operatorname*{Rco}E$ may not be compact. For this reason,
in \cite{Croce}, $\operatorname*{Rco}E$ had to be computed (and the conclusion was that it coincides
with $\operatorname*{Rco}_fE$).
However, thanks to the theory presented in Section \ref{sectionsufficientconditions}, the results of
Cardaliaguet and Tahraoui are sufficient to obtain
Theorem \ref{existence theorem compact isotropic} using the Baire categories method.
We proceed with a brief sketch of this proof
which is essentially the one given in \cite{Croce}, but with no need of computing $\operatorname*{Rco}E$.
\medskip

\noindent{\bf Proof of Theorem \ref{existence theorem compact isotropic}.}
We recall that by the results in \cite{CT} and \cite{Croce},
$$\operatorname*{Rco}\nolimits_{f}E=\left\{\xi \in \mathbb{R}^{2\times 2}:\
f_{\theta}(\lambda_1(\xi),\lambda_2(\xi))\le
\max\limits_{(a,b) \in \Lambda_E}f_{\theta}(a,b),
\,\forall\,\,\theta \in [0,\max\limits_{(a,b)\in \Lambda_E}b]\right\}$$ and
$$\operatorname*{int}\operatorname*{Rco}\nolimits_{f}E=\left\{\xi \in \mathbb{R}^{2\times 2}\hspace{-0,1cm}
:\hspace{-0,1cm}
f_{\theta}(\lambda_1(\xi),\lambda_2(\xi))< \hspace{-0,1cm} \max\limits_{(a,b) \in \Lambda_E}f_{\theta}(a,b),
\forall \theta \in [0,\max\limits_{(a,b)\in \Lambda_E}b]\right\}.$$
Thus, by Corollary \ref{corol1} and Theorem \ref{abstract existence theorem}, it is enough to construct
compact sets
$E_\delta$ such that
$\operatorname*{Rco}\nolimits_{f}E_\delta\subset\mathrm{int}\operatorname*{Rco}\nolimits_{f}E$ and
satisfying conditions $(i)$ and $(ii)$ of Theorem \ref{sufficientconditionforrelaxation} with
$K_\delta=\operatorname*{Rco}_f E_\delta$. In fact, this is the case if we consider
$$\displaystyle E_\delta=\bigcup_{(a,b)\in \Lambda_E}\left\{\xi\in\mathbb{R}^{2 \times 2} :\
(\lambda_1(\xi),\lambda_2(\xi))=(a-\delta,b-\delta)\right\},$$
for $\displaystyle 0\le \delta\le \min_{(a,b)\in\Lambda_E}\frac{a}{2}$.
We refer to \cite{Croce} for the details of the proof.$\hfill\Box$

\subsection{Differential inclusions for some $\mathcal{SO}(n)$ invariant sets}
\label{Differential inclusions for some SO(n) invariant sets}

We consider in this section the differential inclusion problem
\begin{equation}\label{differential inclusion isotropic with determinant restriction}
\left\{
\begin{array}{ll}
D u(x) \in E, \,\, &\mbox{a.e. } x \in \Omega,
\\
u(x)=\varphi(x), \,\, & x \in \partial \Omega,
\end{array}
\right.
\end{equation}
in the case $E$ has the form
$$E=\left\{\xi \in \mathbb{R}^{n\times n}: (\lambda_1(\xi),\cdots, \lambda_n(\xi))\in \Lambda_E,\ \det\xi>
0\right\},$$
with $\Lambda_E\subseteq\{(x_1, \cdots,x_n)\in \mathbb{R}^n: 0< x_1 \leq \cdots \leq x_n\}$. As already
observed, these are not isotropic sets, but just a class of $\mathcal{SO}(n)$ invariant sets.

For $n=2$, Cardaliaguet and Tahraoui \cite{CT2} defined the set $R(\Lambda_E)$
for any compact set $\Lambda_E\subset \{(x_1,x_2) \in \mathbb{R}^{2}: 0\leq x_1\leq x_2\}$ as the smallest compact subset of
$\{(x_1,x_2) \in \mathbb{R}^2: 0\leq x_1\leq x_2\}$ containing $\Lambda_E$ such that
$$
\{\xi \in \mathbb{R}^{2\times 2}: (\lambda_1(\xi),\lambda_2(\xi))\in R(\Lambda_E),\ \det\xi\geq 0\}
$$
is rank one convex.
This hull can be used to describe $\operatorname*{Rco}_f E$
(see Lemma \ref{lemma_equivalenza_inviluppi}).
The representation of this envelop is quite complicated and leads to some difficulty in dealing with it
in order to show existence of solutions to problem
(\ref{differential inclusion isotropic with determinant restriction}).
In Theorem \ref{existence isotropic with determinant constraint two points} we consider a particular set $E$
composed by matrices with two possible singular values and, using Cardaliaguet and Tahraoui's results, we
give a sufficient condition for existence, relating the gradient of the
boundary data and the hull $\operatorname*{Rco}_fE$.

For $n>2$, a representation of $\operatorname*{Rco}_fE$ is not available. Of course, if one wants to ensure
existence of solutions to (\ref{differential inclusion isotropic with determinant restriction}), one may not
need to know the entire hull. Moreover, we notice that in the applications it is more
convenient to have simpler conditions to check
than those describing the hull $\operatorname*{Rco}_fE$ obtained by Cardaliaguet and Tahraoui \cite{CT2}
for the 2 dimensional case. In
this sense, in Theorems \ref{existence isotropic with determinant constraint n=2} and
\ref{existence isotropic with determinant constraint n=3} we will establish sufficient conditions for
existence of solutions to problem (\ref{differential inclusion isotropic with determinant restriction})
for certain sets $E$ in dimension 2 and 3. Analogous results could be obtained in higher dimensions
however, due to the heavy notation already present in the 3 dimensional case, we have only considered these
two settings.

\subsubsection{Set of singular values consisting of two points}\label{section The case of two points}
In this section we are going to consider the case where
$$
E=\{\xi \in \mathbb{R}^{2\times 2}: (\lambda_1(\xi),\lambda_2(\xi))\in \Lambda_E,\ \det\xi> 0\}
$$
with $\Lambda_E=\{(a_1,a_2),(b_1,b_2)\}$ and $0<a_1<b_1<a_2<b_2$.
We start by studying the set $\operatorname*{Rco}\nolimits_{f}E$. To this effect we will use the following characterization
of $R(\Lambda_E)$ obtained in \cite[Proposition 8.6, Theorem 7.1 and
Definition 1.1]{CT2}.

\begin{theorem}\label{thmCT2}
Let $\Lambda$ be a compact subset of $\{(x_1,x_2) \in \mathbb{R}^{2}: 0\leq x_1\leq x_2\}$ such that
$R(\Lambda)$
is connected. Then
$$
R(\Lambda)=\{(x_1,x_2)\in \mathbb{R}^{2} \hspace{-0,1cm}: \hspace{-0,07cm} 0\leq x_1\leq x_2, x_1\geq \alpha,
\sigma_3(x_1)\leq x_2\leq
\inf\{\sigma_1(x_1),\sigma_2(x_1)\}\}
$$
where
$\alpha=\inf\limits_{(x_1,x_2) \in \Lambda} x_1$,$$
\begin{array}{ll}
\displaystyle \sigma_1(x_1)=\inf_{(\theta,\gamma)\in \Sigma_1}f^1_{\theta,\gamma}(x_1), &
\displaystyle
f^1_{\theta,\gamma}(x_1)=\left\{
\begin{array}{ll}
\theta+\frac{\gamma - \theta^2}{\theta -x_1}, & x_1<\theta,
\\
+\infty, & \textnormal{otherwise},
\end{array}
\right.
\\
\displaystyle \sigma_2(x_1)=\inf_{(\theta,\gamma)\in \Sigma_2}f^2_{\theta,\gamma}(x_1), &
\displaystyle f^2_{\theta,\gamma}(x_1)=
\theta+\frac{\gamma - \theta^2}{\theta +x_1}\,,
\\
\displaystyle \sigma_3(x_1)=\sup_{(\theta,\gamma)\in \Sigma_3}f^3_{\theta,\gamma}(x_1), &
\displaystyle f^3_{\theta,\gamma}(x_1)=\left\{
\begin{array}{ll}
-\theta+\frac{\gamma - \theta^2}{x_1-\theta}, & x_1>\theta\,,
\\
+\infty, & \textnormal{otherwise}\,,
\end{array}
\right.
\end{array}
$$
$$
\Sigma_0=\{(\theta,\gamma) \in \mathbb{R}^2:\  \theta\ge 0,\ \gamma\ge\theta^2\},
$$
$$
\Sigma_1=\{(\theta,\gamma) \in \Sigma_0:\   x_2\leq f^1_{\theta,\gamma}(x_1),\ \forall\,(x_1,x_2) \in
\Lambda\},
$$
$$
\Sigma_2=\{(\theta,\gamma) \in \Sigma_0:\  x_2\leq f^2_{\theta,\gamma}(x_1),\ \forall\,(x_1,x_2) \in \Lambda\},
$$
$$
\Sigma_3=\{(\theta,\gamma) \in \Sigma_0:\  x_2\geq f^3_{\theta,\gamma}(x_1),\ \forall\,(x_1,x_2) \in \Lambda\}.
$$
Moreover,
there exists a convex function $h: \mathbb{R}^{2\times 2} \times \mathbb{R}\to \mathbb{R}$  such that
\begin{eqnarray*}
& & \big\{\xi \in \mathbb{R}^{2\times 2}: (\lambda_1(\xi),\lambda_2(\xi))\in R(\Lambda),\
\det \xi \ge 0\big\} \\
& & =\big\{\xi \in \mathbb{R}^{2\times 2}: h(\xi,\det \xi)\leq 0,\ \det \xi \ge 0\big\}.
\end{eqnarray*}
\end{theorem}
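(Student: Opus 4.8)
The plan is to prove the claimed identity by establishing the two planar inclusions separately and then extracting the convex function $h$. Throughout, for a planar set $A\subset\{0\le x_1\le x_2\}$ write $K_A=\{\xi\in\mathbb{R}^{2\times2}:(\lambda_1(\xi),\lambda_2(\xi))\in A,\ \det\xi\ge0\}$, so that by definition $R(\Lambda)$ is the smallest compact $A\supseteq\Lambda$ for which $K_A$ is rank one convex, and denote by $S$ the explicit set on the right hand side. First I would reduce to a one–dimensional analysis in the singular value plane: since $K_A$ is invariant under $\xi\mapsto R\xi S$ with $R,S\in\mathcal{SO}(2)$, and since for $2\times2$ matrices $\det$ is \emph{affine} along every rank one segment while $\xi\mapsto|\xi|^2$ is convex, a rank one segment is completely encoded by the two quantities $\det\xi$ and $\lambda_2(\xi)-\lambda_1(\xi)=\sqrt{|\xi|^2-2\det\xi}$ (for $\det>0$). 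A case analysis of how two rank one connected matrices of positive determinant can be joined then produces exactly three admissible modes, which are the geometric origin of the three families $f^1_{\theta,\gamma},f^2_{\theta,\gamma},f^3_{\theta,\gamma}$; the connectedness hypothesis on $R(\Lambda)$ is what guarantees that the resulting upper and lower boundaries are the single valued envelopes $\sigma_1,\sigma_2,\sigma_3$ rather than a union of disconnected arcs.

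For the inclusion $K_{R(\Lambda)}\subseteq K_S$ I would rewrite the defining identities of the $f^i_{\theta,\gamma}$ as level sets of explicit functions of $\xi$. A direct computation (e.g. $x_2=f^2_{\theta,\gamma}(x_1)$ is equivalent to $\theta(x_2-x_1)+x_1x_2=\gamma$) shows that the relevant functions are, for $\theta\ge0$,
\begin{align*}
g^1_\theta(\xi) &= \theta(\lambda_1(\xi)+\lambda_2(\xi)) - \det\xi,\\
g^2_\theta(\xi) &= \det\xi + \theta(\lambda_2(\xi)-\lambda_1(\xi)),\\
g^3_\theta(\xi) &= \det\xi - \theta(\lambda_2(\xi)-\lambda_1(\xi)).
\end{align*}
Each of these is rank one convex: $\lambda_1+\lambda_2$ is the nuclear norm, hence convex; $\det$ is rank one affine; and along any rank one segment $|\xi|^2-2\det\xi$ is a nonnegative convex quadratic, so its square root $\lambda_2-\lambda_1$ is convex there. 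The sets $\Sigma_1,\Sigma_2,\Sigma_3$ are precisely the parameters for which the corresponding function lies below $\max_\Lambda$ (respectively above $\min_\Lambda$) on $\Lambda$, the upper bounds $\sigma_1,\sigma_2$ coming from $g^1_\theta,g^2_\theta$ and the lower bound $\sigma_3$ from $g^3_\theta$, while the constraint $x_1\ge\alpha$ is the sublevel set of the polyconvex function $\alpha\lambda_2(\xi)-\det\xi$. Since sublevel sets of rank one convex functions are rank one convex, $K_S$ is rank one convex and contains $\Lambda$; minimality of $R(\Lambda)$ then gives $R(\Lambda)\subseteq S$.

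The reverse inclusion $S\subseteq R(\Lambda)$ is where the real difficulty lies, and I expect it to be the main obstacle. The natural route is to show that every point of $S$ is reachable by first order laminates from $\Lambda$, i.e. $K_S\subseteq\operatorname*{Rco}(K_\Lambda)\subseteq K_{R(\Lambda)}$, which together with the previous step forces equality. Concretely, for a point on one of the boundary curves $x_2=\sigma_i(x_1)$ one must construct an explicit rank one segment (or short lamination) whose endpoints lie in $\Lambda$, or on boundary pieces already shown to be attained, which passes through the given point while respecting $\det\ge0$. This demands the precise parametrization of rank one connections in the $(\lambda_1,\lambda_2)$–plane obtained in the first step, together with a careful verification that the three envelope families are \emph{sharp}, that is, that no strictly smaller rank one convex set contains $\Lambda$. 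The bookkeeping here is delicate precisely because three distinct modes of connection interact near the corners where $\sigma_1$, $\sigma_2$ and $\sigma_3$ meet.

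Finally, for the convex function $h$ I would argue separately, since $g^2_\theta$ and $g^3_\theta$ are rank one convex but \emph{not} polyconvex (they are concave in the determinant variable when it is allowed to vary freely). Instead I would invoke the polyconvexity of the set, in the spirit of Cardaliaguet and Tahraoui's theorem that in dimension two a compact rank one convex set of this type is polyconvex: this yields a convex set $C\subset\mathbb{R}^{2\times2}\times\mathbb{R}$ with $\{\xi:(\xi,\det\xi)\in C\}=K_S$, and taking $h$ to be the gauge (or signed distance) associated with $C$ produces a convex $h$ for which $K_S=\{\xi:h(\xi,\det\xi)\le0,\ \det\xi\ge0\}$, as claimed.
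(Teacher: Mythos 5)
You should know at the outset that the paper contains no proof of this statement to compare against: Theorem \ref{thmCT2} is imported wholesale from Cardaliaguet and Tahraoui \cite[Proposition 8.6, Theorem 7.1 and Definition 1.1]{CT2} and used as a black box. So your proposal can only be judged on its own terms, and on those terms it has a genuine gap. The easy half is fine in outline: rewriting $x_2\leq f^1_{\theta,\gamma}(x_1)$ as $\theta\bigl(\lambda_1(\xi)+\lambda_2(\xi)\bigr)-\det\xi\leq\gamma$ (and similarly for $f^2,f^3$), checking that these functions are rank one convex on $\{\det\xi\geq 0\}$ (for the two involving $\lambda_2-\lambda_1$ one should really work with the globally convex function $\sqrt{|\xi|^2-2\det\xi}$, which coincides with $\lambda_2(\xi)-\lambda_1(\xi)$ on that set), and invoking minimality of $R(\Lambda)$ does yield $R(\Lambda)\subseteq S$, modulo verifying that $S$ is compact and handling the $+\infty$ conventions in $f^1,f^3$. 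But the converse inclusion $S\subseteq R(\Lambda)$ is the entire content of the theorem --- it is the assertion that the three envelope families are \emph{sharp} --- and your proposal does not prove it: you state that one ``must construct an explicit rank one segment (or short lamination)'' through every point of $S$ and that the bookkeeping ``is delicate'', but no such construction is given, not even for a single boundary point, and you never locate where the connectedness hypothesis on $R(\Lambda)$ actually enters the argument, which is a sign that the sharpness direction was not engaged. A plan that defers the only hard step is not a proof.

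The ``moreover'' part suffers from circularity rather than omission. The existence of a convex $h$ with $\bigl\{\xi: (\lambda_1(\xi),\lambda_2(\xi))\in R(\Lambda),\ \det\xi\geq0\bigr\}=\bigl\{\xi: h(\xi,\det\xi)\leq0,\ \det\xi\geq0\bigr\}$ is precisely the assertion that this rank one convex set is polyconvex, and your proposed route is to cite Cardaliaguet and Tahraoui's equivalence between rank one convexity and polyconvexity for ``sets of this type''. But these sets are not isotropic --- they carry the constraint $\det\xi\geq 0$ and are only $\mathcal{SO}(2)$ invariant, as the paper itself stresses --- so the Part I result \cite{CT} for isotropic sets does not apply, and the equivalence for this constrained class is the main theorem of \cite{CT2}, i.e.\ the very source of the statement you are attempting to prove blind. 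Your observation that $g^2_\theta$ and $g^3_\theta$ fail to be polyconvex, so that $h$ cannot be assembled naively from the defining inequalities, is correct, and it shows exactly why this step demands a genuinely new construction rather than a citation.
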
\medskip

The following sufficient condition for $R(\Lambda)$ to be connected was also proven in
\cite[Proposition 8.4]{CT2}.

\begin{proposition}\label{connected}
Let $\Lambda$ be a compact subset of $\{(x_1,x_2) \in \mathbb{R}^{2}: 0\leq x_1\leq x_2\}$ and
assume that $\Lambda$ satisfies the following property:
if there exist $C_1$ and $C_2$, compact subsets of $\Lambda$, such that $C_1 \cap C_2 = \emptyset$,
$C_1 \cup C_2 = \Lambda$ and
$\underset{(x_1,x_2) \in C_1}\sup x_2 < \underset{(x_1,x_2) \in C_2}\inf x_1$,
then either $C_1 = \emptyset$ or $C_2 = \emptyset$.
Then $R(\Lambda)$ is connected.
\end{proposition}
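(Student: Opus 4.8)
The plan is to reformulate the indecomposability hypothesis as a statement about a single subset of the real line and then to transport connectedness through the hull operation. For a compact $\Lambda\subset\{0\le x_1\le x_2\}$ I set $V_\Lambda:=\bigcup_{(p_1,p_2)\in\Lambda}[p_1,p_2]\subset[0,\infty)$, the union of all singular‑value intervals, and I also write $K_\Lambda=\{\xi\in\mathbb{R}^{2\times2}:(\lambda_1(\xi),\lambda_2(\xi))\in\Lambda,\ \det\xi\ge0\}$ for the associated matrix set. An elementary argument shows that the property assumed on $\Lambda$ is \emph{equivalent} to $V_\Lambda$ being connected, i.e.\ a single compact interval $[m,M]$: a gap $(s,t)$ in $V_\Lambda$ yields the forbidden splitting $C_1=\{p\in\Lambda:p_2\le s\}$, $C_2=\{p\in\Lambda:p_1\ge t\}$ (no interval straddles the gap), and conversely any splitting with $\sup_{C_1}x_2<\inf_{C_2}x_1$ forces a gap. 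Thus it suffices to prove that $V_\Lambda=[m,M]$ implies $R(\Lambda)$ connected, which I would do in two transport steps, $V_\Lambda\ \text{connected}\Rightarrow V_{R(\Lambda)}\ \text{connected}\Rightarrow R(\Lambda)\ \text{connected}$.

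For the first step I would use two facts about singular values along a rank one segment $\xi(\tau)=(1-\tau)B+\tau A$ with $\operatorname{rank}(A-B)\le1$. Since $\xi\mapsto\lambda_2(\xi)$ is the operator norm it is convex, so $\lambda_2(\xi(\tau))\le\max\{\lambda_2(A),\lambda_2(B)\}$; and since $\det$ is rank one affine, on $\{\det>0\}$ one has $\lambda_1=\det/\lambda_2$ with affine numerator and positive convex denominator in $\tau$, whence $\lambda_1$ is quasiconcave and $\lambda_1(\xi(\tau))\ge\min\{\lambda_1(A),\lambda_1(B)\}$. Consequently the matrix sets $\{\lambda_2\le M\}$ and $\{\lambda_1\ge m,\ \det\ge0\}$ are rank one convex (a sublevel set of a convex function, and a set closed under the two bounds above), compact once intersected, and each contains $K_\Lambda$; by minimality of $R(\Lambda)$ every point of $R(\Lambda)$ then satisfies $m\le\lambda_1\le\lambda_2\le M$, so $V_{R(\Lambda)}=[m,M]=V_\Lambda$ is connected.

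The second step rests on a connection lemma: two points of the positive determinant region are joined, after passing to suitable representatives in their $\mathcal{SO}(2)$ orbits, by a rank one segment \emph{if and only if} their singular‑value intervals overlap, $p_1\le q_2$ and $q_1\le p_2$. Necessity is immediate from Weyl's inequalities for singular values under a rank one perturbation, $\sigma_2(A)\le\sigma_1(B)+\sigma_2(A-B)=\sigma_1(B)$ and symmetrically, giving $p_1\le q_2$ and $q_1\le p_2$; sufficiency is the lamination statement underlying the construction of \cite{CT2}, namely the solvability of $\det(A-B)=0$ over the two rotation orbits, which I would invoke from there. Because $K_{R(\Lambda)}$ is rank one convex, whenever $p,q\in R(\Lambda)$ have overlapping intervals the entire rank one segment between their connected representatives lies in $K_{R(\Lambda)}$ and projects to a continuous path in $R(\Lambda)$ from $p$ to $q$; hence overlapping intervals force $p$ and $q$ into the same connected component. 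If $R(\Lambda)$ were disconnected, a clopen separation $R(\Lambda)=H_1\sqcup H_2$ into nonempty compacts would give $V_{H_1}\cap V_{H_2}=\emptyset$ (a common value would supply overlapping intervals, hence a connection across the separation), and since $V_{H_1}\cup V_{H_2}=V_{R(\Lambda)}$ with both parts nonempty compact, $V_{R(\Lambda)}$ would be disconnected, contradicting the first step.

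The main obstacle is the sufficiency half of the connection lemma — that overlapping singular values genuinely guarantee a rank one connection between the two orbits — together with the care needed at the degenerate boundary $\det=0$: when $m=0$ the lower bound on $\lambda_1$ is vacuous and the quasiconcavity argument must be confined to $\{\det>0\}$, while the segment $\xi(\tau)$ keeps $\det$ affine and nonnegative throughout. Once that lemma is available, the convexity/quasiconcavity bounds and the topological packaging are routine.
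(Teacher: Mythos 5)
There is no proof in the paper to compare yours against: Proposition \ref{connected} is not proved there, it is quoted verbatim from Cardaliaguet and Tahraoui \cite[Proposition 8.4]{CT2}. So your proposal is by necessity a different route --- an actual argument where the paper only has a citation --- and, checking it step by step, it is essentially correct. The reduction of the hypothesis to connectedness of $V_\Lambda=\bigcup_{(p_1,p_2)\in\Lambda}[p_1,p_2]$ works in the direction you need: a gap $(s,t)$ of the compact set $V_\Lambda$ has both endpoints in $V_\Lambda$, which makes $C_1=\{p\in\Lambda:p_2\le s\}$ and $C_2=\{p\in\Lambda:p_1\ge t\}$ nonempty, while no interval $[p_1,p_2]$ can straddle the gap, so $C_1\cup C_2=\Lambda$ and the hypothesis is violated. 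The transport step $V_{R(\Lambda)}=V_\Lambda=[m,M]$ is also sound, with one phrasing to tighten: minimality in the definition of $R(\Lambda)$ is minimality among compact sets of singular-value pairs, so you should name $\Lambda'=\{(x_1,x_2):m\le x_1\le x_2\le M\}$ and observe that your intersection $\{\lambda_1\ge m,\ \lambda_2\le M,\ \det\ge 0\}$ is exactly the matrix set associated to $\Lambda'$; its rank one convexity follows, as you say, from the affinity of $\det$ along rank one segments, the convexity of the operator norm $\lambda_2$, and the concavity of $\det-m\lambda_2$ along such segments (your ``quasiconcavity'' is only valid at nonnegative levels, which is all you use). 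The final packaging --- overlapping intervals force two points of $R(\Lambda)$ into one connected component, hence a clopen splitting of $R(\Lambda)$ would disconnect $V_{R(\Lambda)}$ --- is correct as well.

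The one genuine weak point is the sufficiency half of your connection lemma, which you leave to be imported from \cite{CT2}. Since the whole proposition is itself a \cite{CT2} citation, importing their lamination machinery is circular in spirit; fortunately the lemma is a short computation, which closes your proof completely. With $R_\theta,S_\phi\in\mathcal{SO}(2)$ the standard rotations, expanding $\det(X-Y)=\det X+\det Y-\mathrm{tr}\,(X\,\mathrm{adj}\,Y)$ gives
\begin{align*}
\det\bigl(\mathrm{diag}(p_1,p_2)-R_\theta\,\mathrm{diag}(q_1,q_2)\,S_\phi\bigr)
&=p_1p_2+q_1q_2+\tfrac{1}{2}(p_2-p_1)(q_2-q_1)\cos(\phi-\theta)\\
&\quad-\tfrac{1}{2}(p_1+p_2)(q_1+q_2)\cos(\phi+\theta),
\end{align*}
and since $\phi+\theta$ and $\phi-\theta$ vary independently, this expression sweeps exactly the interval centered at $p_1p_2+q_1q_2$ of radius $\tfrac{1}{2}(p_2-p_1)(q_2-q_1)+\tfrac{1}{2}(p_1+p_2)(q_1+q_2)=p_1q_1+p_2q_2$. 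It therefore vanishes for some choice of angles if and only if $p_1p_2+q_1q_2\le p_1q_1+p_2q_2$, which factors as $(p_2-q_1)(q_2-p_1)\ge 0$, i.e.\ precisely the overlap condition $[p_1,p_2]\cap[q_1,q_2]\ne\emptyset$; both representatives have nonnegative determinant, as required. (The Weyl-inequality direction you prove is never used and can be dropped.) With this computation inserted, your argument is a complete and self-contained proof of the proposition, granting only the existence of the hull $R(\Lambda)$, which the paper itself takes from \cite{CT2}.
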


If $E = \left\{\xi \in \mathbb{R}^{2\times 2}: (\lambda_1(\xi),\lambda_2(\xi))\in \Lambda_E,\ \det\xi\ge 0
\right\}$,
where $\Lambda_E$ is a compact subset of $\{(x_1,x_2) \in \mathbb{R}^2: 0\leq x_1\leq x_2\}$, then
$R(\Lambda_E)$ describes the hull $\operatorname*{Rco}\nolimits_{f}E$, as we prove in the following lemma.

\begin{lemma}\label{lemma_equivalenza_inviluppi}
Let $E=\{\xi \in \mathbb{R}^{2\times 2}: (\lambda_1(\xi),\lambda_2(\xi))\in \Lambda_E,\ \det\xi\ge 0\}$
where $\Lambda_E$ is a compact subset of $\{(x_1,x_2) \in \mathbb{R}^2: 0\leq x_1\leq x_2\}$ such that
$R(\Lambda_E)$ is connected. Then
$$
\operatorname*{Rco}\nolimits_{f}E=\{\xi \in \mathbb{R}^{2\times 2}:
(\lambda_1(\xi),\lambda_2(\xi))\in R(\Lambda_E),\ \det \xi \ge 0\}.
$$
\end{lemma}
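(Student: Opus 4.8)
The plan is to prove the two inclusions separately, writing $\mathcal{R}:=\{\xi\in\mathbb{R}^{2\times 2}:(\lambda_1(\xi),\lambda_2(\xi))\in R(\Lambda_E),\ \det\xi\ge 0\}$, so that the goal is $\operatorname*{Rco}\nolimits_{f}E=\mathcal{R}$. For the inclusion $\operatorname*{Rco}\nolimits_{f}E\subseteq\mathcal{R}$ I would produce rank one convex functions that are nonpositive on $E$ and whose common nonpositivity set is exactly $\mathcal{R}$; by the very definition of the hull (Definition \ref{hullsfinite}) every element of $\operatorname*{Rco}\nolimits_{f}E$ must then lie in $\mathcal{R}$. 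Since $R(\Lambda_E)$ is assumed connected, Theorem \ref{thmCT2} supplies a convex function $h:\mathbb{R}^{2\times 2}\times\mathbb{R}\to\mathbb{R}$ with $\mathcal{R}=\{\xi:h(\xi,\det\xi)\le 0,\ \det\xi\ge 0\}$. Because $T(\xi)=(\xi,\det\xi)$ in the $2\times 2$ case, the map $\xi\mapsto h(\xi,\det\xi)$ is polyconvex, hence real-valued and rank one convex, while $\xi\mapsto -\det\xi$ is rank one affine, hence rank one convex. Both are nonpositive on $E$: there $\det\xi\ge 0$, and $(\lambda_1(\xi),\lambda_2(\xi))\in\Lambda_E\subseteq R(\Lambda_E)$ places $\xi$ in $\mathcal{R}$, so $h(\xi,\det\xi)\le 0$. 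Thus both functions belong to $\mathcal{F}^E$ and are nonpositive throughout $\operatorname*{Rco}\nolimits_{f}E$, which gives $\operatorname*{Rco}\nolimits_{f}E\subseteq\mathcal{R}$.

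For the reverse inclusion I would exploit the minimality built into the definition of $R(\Lambda_E)$. By Theorem \ref{Ercisotropic}, $\operatorname*{Rco}\nolimits_{f}E=\{\xi:(\lambda_1(\xi),\lambda_2(\xi))\in\Lambda_{\operatorname*{Rco}_f E},\ \det\xi\ge 0\}$ for some compact set $\Lambda_{\operatorname*{Rco}_f E}\subseteq\{0\le x_1\le x_2\}$. This set contains $\Lambda_E$, since each $(a,b)\in\Lambda_E$ is the pair of singular values of ${\rm diag}(a,b)\in E\subseteq\operatorname*{Rco}\nolimits_{f}E$, and the matrix set it realizes is precisely $\operatorname*{Rco}\nolimits_{f}E$, which is rank one convex as is every such hull (see the remark following Definition \ref{hullsfinite}). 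Hence $\Lambda_{\operatorname*{Rco}_f E}$ is a compact subset of $\{0\le x_1\le x_2\}$ containing $\Lambda_E$ for which $\{\xi:(\lambda_1(\xi),\lambda_2(\xi))\in\Lambda_{\operatorname*{Rco}_f E},\ \det\xi\ge 0\}$ is rank one convex. By the minimality defining $R(\Lambda_E)$ this forces $R(\Lambda_E)\subseteq\Lambda_{\operatorname*{Rco}_f E}$, and therefore $\mathcal{R}\subseteq\operatorname*{Rco}\nolimits_{f}E$. Combining the two inclusions yields the claimed identity.

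The main obstacle is organizational rather than computational: before invoking minimality in the second step I must verify that $\Lambda_{\operatorname*{Rco}_f E}$ meets \emph{every} requirement in the definition of $R(\Lambda_E)$, namely compactness and the prescribed form (both furnished by Theorem \ref{Ercisotropic}), the inclusion $\Lambda_E\subseteq\Lambda_{\operatorname*{Rco}_f E}$, and the rank one convexity of the realized matrix set. The connectedness hypothesis enters only through Theorem \ref{thmCT2}, where it guarantees the convex certificate $h$; without it the first inclusion, and hence the recognition of $\mathcal{R}$ as a sublevel set of a real-valued polyconvex function, would not be available in this explicit form.
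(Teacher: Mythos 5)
Your proof is correct and follows essentially the same route as the paper's: the inclusion $\operatorname*{Rco}\nolimits_{f}E\subseteq\mathcal{R}$ via the convex certificate $h$ from Theorem \ref{thmCT2} (the paper phrases this through $\operatorname*{Pco}\nolimits_{f}E\subseteq\mathcal{R}$, you apply the rank one convex functions directly, which is the same observation), and the reverse inclusion via Theorem \ref{Ercisotropic} together with the minimality in the definition of $R(\Lambda_E)$. Your explicit verification that $\Lambda_E\subseteq\Lambda_{\operatorname*{Rco}_f E}$ (via the diagonal matrices $\mathrm{diag}(a,b)$) is a detail the paper leaves implicit, but it is the same argument.
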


\begin{proof}
We set
$$
\mathcal{E}=\{\xi \in \mathbb{R}^{2\times 2}: (\lambda_1(\xi),\lambda_2(\xi)) \in R(\Lambda_E),\
\det \xi\geq 0\}.
$$
Since $R(\Lambda_E)$  is connected, by Theorem \ref{thmCT2} there exists a convex function
$h: \mathbb{R}^{2\times 2}\times \mathbb{R}\to \mathbb{R}$
such that
$
\mathcal{E}=\{\xi \in \mathbb{R}^{2\times 2}: h(\xi, \det \xi)\leq 0,\ \det \xi\geq 0\}.
$
Note that  $h(\xi, \det \xi)\leq 0$ for every $\xi \in E$, since $E\subset \mathcal{E}$.
According to Definition \ref{hullsfinite} of $\operatorname*{Pco}\nolimits_{f}E$,  one has
$\operatorname*{Pco}\nolimits_{f}E \subseteq \mathcal{E}$.
This implies that $\operatorname*{Rco}\nolimits_{f}E\subseteq \mathcal{E}$.

On the other hand, $\operatorname*{Rco}\nolimits_{f}E$ is a compact and rank one convex set. By
Theorem \ref{Ercisotropic},
$$
\operatorname*{Rco}\nolimits_{f}E=\{\xi \in \mathbb{R}^{2\times 2}:\
(\lambda_1(\xi),\lambda_2(\xi))\in \tilde{\Lambda},\ \det \xi \geq 0\}
$$
for some compact subset $\tilde{\Lambda}$ of $\{(x_1,x_2) \in \mathbb{R}^2: 0\leq x_1\leq x_2\}$.
By definition of $R(\Lambda_E)$, $\tilde{\Lambda} \supseteq R(\Lambda_E)$ and thus
$\operatorname*{Rco}\nolimits_{f}E\supseteq \mathcal{E}$.
\end{proof}

Using the two previous results we can show the following formula for
$\operatorname*{Rco}\nolimits_{f}E$, for
the set  $E$ considered in this section.

\begin{proposition}\label{rcoftwopoints}
Let
$
E=\{\xi \in \mathbb{R}^{2\times 2}: (\lambda_1(\xi),\lambda_2(\xi))\in \Lambda_E,\ \det \xi>0\},
$
where $\Lambda_E=\{(a_1,a_2),(b_1,b_2)\}$, $0<a_1<b_1<a_2<b_2$, and define
$$\displaystyle \underline{\theta}=\frac{-a_1 a_2+b_1 b_2}{b_1+b_2-a_1-a_2}.$$
Then
\begin{equation}\label{formulaRcoftwopoints}
\operatorname*{Rco}\nolimits_{f}E=\{\xi \in \mathbb{R}^{2\times 2}: (\lambda_1(\xi),\lambda_2(\xi))
\in R(\Lambda_E),\ \det \xi> 0\}
\end{equation}
where
$R(\Lambda_E)$ is the set of points $(x_1,x_2) \in \mathbb{R}^2$ such that
\begin{equation}\label{formula_RLambda}
\left\{
\begin{array}{l}
a_1\leq x_1\leq x_2\leq b_2,\vspace{0.2cm}
\\
\displaystyle
x_2\geq \frac{a_1 a_2}{x_1},\vspace{0.2cm}
\\
\displaystyle
x_2\leq \frac{-a_1 a_2+\underline{\theta}(a_1+a_2)-\underline{\theta} x_1}{-x_1+\underline{\theta}},\quad \text{if}\ a_1\le x_1\le b_1\vspace{0.2cm}
\\
\displaystyle
x_2\leq\frac{b_1 b_2}{x_1}.
\end{array}
\right.
\end{equation}
Moreover,
$$
\mathrm{int}\operatorname*{Rco}\nolimits_{f}E=\{\xi \in \mathbb{R}^{2\times 2}:
(\lambda_1(\xi),\lambda_2(\xi))\in \mathrm{rel\,int}R(\Lambda_E),\ \det \xi> 0\},
$$
where
$\mathrm{rel\,int}R(\Lambda_E)$ is the relative interior of $R(\Lambda_E)$ with
respect to the set \newline
$\{(x_1,x_2) \in \mathbb{R}^2: 0 < x_1\leq x_2\}$ and is the
set of points $(x_1,x_2) \in \mathbb{R}^2$ such that
$$
\left\{
\begin{array}{l}
a_1<x_1\leq x_2<b_2,\vspace{0.2cm}
\\
\displaystyle
x_2>\frac{a_1 a_2}{x_1},\vspace{0.2cm}
\\
\displaystyle
x_2<\frac{-a_1 a_2+\underline{\theta}(a_1+a_2)-\underline{\theta} x_1}{-x_1+\underline{\theta}},\quad \text{if}\ a_1< x_1< b_1\vspace{0.2cm}
\\
\displaystyle
x_2<\frac{b_1 b_2}{x_1}.
\end{array}
\right.
$$
\end{proposition}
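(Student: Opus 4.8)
The plan is to reduce the statement to an explicit computation of the Cardaliaguet--Tahraoui set $R(\Lambda_E)$ via Theorem \ref{thmCT2}, and then to transfer the outcome to $\operatorname*{Rco}\nolimits_{f}E$ through Lemma \ref{lemma_equivalenza_inviluppi}. First I would check that $R(\Lambda_E)$ is connected using Proposition \ref{connected}: the only way to split the two-point set $\Lambda_E$ into two nonempty disjoint compact pieces is into the two singletons, and the separation inequality $\sup_{C_1}x_2<\inf_{C_2}x_1$ would then force either $a_2<b_1$ or $b_2<a_1$, both excluded by $0<a_1<b_1<a_2<b_2$. Hence Theorem \ref{thmCT2} applies; it gives $\alpha=\min\{a_1,b_1\}=a_1$, which is the constraint $x_1\ge a_1$, and reduces the description of $R(\Lambda_E)$ to evaluating the three envelopes $\sigma_1,\sigma_2,\sigma_3$.

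For the lower boundary I expect the supremum defining $\sigma_3$ to be realised by the hyperbola through the point of smaller determinant, namely by $(\theta,\gamma)=(0,a_1a_2)\in\Sigma_3$, which gives $f^3_{0,a_1a_2}(x_1)=a_1a_2/x_1$, so $\sigma_3(x_1)\ge a_1a_2/x_1$. For the reverse inequality I would observe that every admissible pair satisfies $\theta<a_1$ and, from the constraint at $(a_1,a_2)$, $\gamma\le a_1a_2-\theta(a_2-a_1)$; substituting this bound into $f^3_{\theta,\gamma}$ and clearing denominators reduces the desired inequality $f^3_{\theta,\gamma}(x_1)\le a_1a_2/x_1$ to $\theta\,(x_1+a_2)(a_1-x_1)\le 0$, which holds for $x_1\ge a_1$. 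This yields $\sigma_3(x_1)=a_1a_2/x_1$, i.e. the second inequality of (\ref{formula_RLambda}).

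The upper boundary $\min\{\sigma_1,\sigma_2\}$ is the delicate point. The parameter $\underline{\theta}$ is designed so that the curve $f^1_{\underline{\theta},\underline{\gamma}}$, with $\underline{\gamma}=\underline{\theta}(a_1+a_2)-a_1a_2$, passes through both $(a_1,a_2)$ and $(b_1,b_2)$; one checks $\underline{\theta}>b_1$ (equivalently $(b_1-a_1)(b_1-a_2)<0$), so this curve is admissible on $[a_1,b_1]$ and reproduces the third inequality, while $(0,b_1b_2)\in\Sigma_2$ gives the hyperbola $b_1b_2/x_1$ of the fourth inequality. Since these two curves cross exactly at $(b_1,b_2)$, their lower envelope is the $f^1$-curve on $[a_1,b_1]$ and the hyperbola for $x_1\ge b_1$, which matches (\ref{formula_RLambda}); this is the easy half, $\min\{\sigma_1,\sigma_2\}\le$ the claimed boundary. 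The main obstacle is the reverse bound, namely that no admissible curve undercuts this envelope. For $\sigma_2$ I would use that along the $f^2$-curves through $(b_1,b_2)$ the derivative in $\theta$ is proportional to $(x_1-b_1)(x_1+b_2)$, so the minimiser is $\theta=0$ precisely when $x_1\ge b_1$; for $x_1<b_1$ these curves tend to dip below $b_1b_2/x_1$, but this is exactly prevented by the constraint at $(a_1,a_2)$, which bounds the admissible $\theta$. Making this precise, together with the analogous optimality of $\underline{\theta}$ for $\sigma_1$, is where the computation becomes heaviest and may require distinguishing the sign of $(a_2-a_1)-(b_2-b_1)$; cross-multiplying and factoring, as in the $\sigma_3$ case, should keep each subcase elementary.

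Finally, since $a_1a_2>0$ and $b_1b_2>0$, every matrix with singular values in $\Lambda_E$ has nonzero determinant, so $E$ is unchanged whether one writes $\det\xi>0$ or $\det\xi\ge 0$; Lemma \ref{lemma_equivalenza_inviluppi} then gives $\operatorname*{Rco}\nolimits_{f}E=\{\xi:(\lambda_1(\xi),\lambda_2(\xi))\in R(\Lambda_E),\ \det\xi\ge 0\}$. As $R(\Lambda_E)\subset\{x_1\ge a_1>0\}$, the lift satisfies $\lambda_1\lambda_2>0$, whence $\det\xi\ge 0$ may be replaced by $\det\xi>0$, which, combined with (\ref{formula_RLambda}), proves (\ref{formulaRcoftwopoints}). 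For the interior I would translate openness through the continuous singular value map: since the set is $\mathcal{SO}(2)$-invariant, a matrix lies in $\operatorname*{int}\operatorname*{Rco}\nolimits_{f}E$ if and only if its singular values lie in the relative interior of $R(\Lambda_E)$ with respect to the cone $\{0<x_1\le x_2\}$ and $\det\xi>0$; passing the strict versions of the four inequalities through this correspondence, and keeping $x_1\le x_2$ non-strict because the diagonal belongs to the ambient boundary, yields the stated description of $\operatorname*{int}\operatorname*{Rco}\nolimits_{f}E$, with some care required where $x_1=x_2$.
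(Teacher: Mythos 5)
Your route is the paper's route: connectedness of $R(\Lambda_E)$ via Proposition \ref{connected}, reduction of $\operatorname*{Rco}\nolimits_{f}E$ to $R(\Lambda_E)$ via Lemma \ref{lemma_equivalenza_inviluppi} (your explicit handling of $\det\xi>0$ versus $\det\xi\ge 0$, using $x_1\ge a_1>0$, is correct and in fact more careful than the paper's one-line remark), and then an explicit computation of the envelopes $\sigma_1,\sigma_2,\sigma_3$ of Theorem \ref{thmCT2}. Your $\sigma_3$ computation is complete and correct, and matches the paper's; so do the verification $b_1<\underline{\theta}$ and the observation that $\underline{\theta}$ is the parameter for which the $f^1$-curve passes through both $(a_1,a_2)$ and $(b_1,b_2)$.

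The gap is in the step you yourself flag as heaviest, and one ingredient of your sketch is actually false. You claim that for $x_1<b_1$ the $f^2$-curves are prevented from undercutting $b_1b_2/x_1$ ``by the constraint at $(a_1,a_2)$, which bounds the admissible $\theta$''. Neither half of this holds: in $\Sigma_2$ the constraint coming from $(a_1,a_2)$, namely $\gamma\ge a_1a_2+\theta(a_2-a_1)$, is redundant, since $a_1a_2+\theta(a_2-a_1)\le\max\{\theta^2,\,b_1b_2+\theta(b_2-b_1)\}$ for every $\theta\ge 0$ (check $\theta\le a_2$ and $\theta\ge a_2$ separately, using $0<a_1<b_1<a_2<b_2$), and $\theta$ is not bounded in $\Sigma_2$ at all. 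What actually happens --- and what the paper proves --- is that $\sigma_2(x_1)=\min\{b_1b_2/x_1,\,b_2\}$: for $x_1<b_1$ your own derivative formula shows the $f^2$-family through $(b_1,b_2)$ decreases in $\theta$, it decreases exactly to $b_2$ as $\theta\to b_2$, and for $\theta\ge b_2$ the minimal admissible $\gamma$ is $\theta^2$, giving $f^2_{\theta,\theta^2}=\theta\ge b_2$; so these curves genuinely dip below $b_1b_2/x_1$ on $[a_1,b_1)$, stopping only at $b_2$. The constraint that really cuts the region down on $[a_1,b_1]$ is $\sigma_1$, through the optimality of $\underline{\theta}$, and establishing that is the bulk of the paper's proof: one must discard the $\theta^2$ branch of the max on $(x_1,b_2]$ (using $x_2\le b_2$, already obtained from $\sigma_2$), split the remaining max at $\underline{\theta}$, and use the $\theta$-monotonicity of the two resulting quotients in the two cases $x_1\ge\underline{\theta}$ and $x_1<\underline{\theta}$. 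The final formula is unharmed only because the $f^1$-curve lies below $b_2$ on $[a_1,b_1]$, but your stated mechanism for $\sigma_2$ would fail if made precise, and the $\sigma_1$ analysis --- the core of the result --- is asserted rather than carried out. (Your sketch of the interior formula is at the same level of detail as the paper's, which simply calls it easy, so I do not count that as an additional gap.)
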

\begin{proof}
Notice that, since $a_1 > 0$ and $\lambda_1(\xi)\lambda_2(\xi) = |\det \xi|$,
our set $E$ satisfies the hypotheses of Lemma \ref{lemma_equivalenza_inviluppi} since,
by Proposition \ref{connected},
the set $R(\Lambda_E)$ is connected. Thus (\ref{formulaRcoftwopoints}) holds and to establish
(\ref{formula_RLambda})
we will write the inequalities
$\sigma_3(x_1)\leq x_2\leq \inf\{\sigma_1(x_1),\sigma_2(x_1)\}$, given by Theorem \ref{thmCT2},
for $x_1\geq a_1$, in a more explicit way.
Let us start by studying
$x_2\geq \sigma_3(x_1)$.
It is easy to see that
$$
\Sigma_3=\left\{(\theta,\gamma) \in \mathbb{R}^2: 0\leq \theta< a_1,\ \theta^2\leq \gamma\leq
a_1a_2-\theta(a_2-a_1)\right\},
$$
as $0<a_1<b_1<a_2<b_2$.
Therefore $x_2\geq \sigma_3(x_1)$ is equivalent to
$$
x_1 x_2-\theta(x_2-x_1)\geq a_1 a_2-\theta(a_2-a_1) \,,\,\,\,\theta \in [0,a_1)
$$
since $x_1>\theta$, that is,
\begin{equation}\label{primaiperbole}
x_2\geq \sup_{\theta \in [0,a_1)}\frac{a_1 a_2-\theta(a_2-a_1)-\theta x_1}{x_1-\theta}=\frac{a_1 a_2}{x_1}.
\end{equation}
In the same way, to study
$x_2\leq \sigma_2(x_1)$,
we remark that
$$
\Sigma_2=\left\{(\theta,\gamma)  \in \mathbb{R}^2: \theta\geq 0,\ \gamma\geq \max\{\theta^2, b_1
b_2+\theta(b_2-b_1)\}\right\}.
$$
Therefore $x_2\leq \sigma_2(x_1)$ is equivalent to
$$
x_1x_2+\theta(x_2-x_1)\leq \max\{\theta^2, b_1b_2+\theta(b_2-b_1)\}\,,\,\,\,\,\theta \geq 0\,,
$$
that is,
\begin{equation}\label{ypiupiccolodib2}
x_2\leq \min\left\{\frac{b_1b_2}{x_1}, b_2\right\}.
\end{equation}
We now analize the inequality $x_2\leq \sigma_1(x_1)$. It is easy to see that
$$
\Sigma_1=\{(\theta,\gamma)  \in \mathbb{R}^2: \theta\geq 0,\ \gamma\geq
\max\{\theta^2,  -a_1a_2+\theta(a_1+a_2),   -b_1b_2+\theta(b_1+b_2)\}\}\,.
$$
Therefore $x_2\leq \sigma_1(x_1)$ is equivalent to
$$
x_2\leq \frac{\gamma-\theta x_1}{\theta -x_1},\,\,\,\,\forall \, (\theta,\gamma) \in \Sigma_1: \theta>x_1\,,
$$
that is,
$$
-x_1 x_2+\theta(x_1+x_2)\leq \max\{\theta^2, -a_1 a_2+\theta(a_1+a_2), -b_1b_2+\theta(b_1+b_2)\},
\,\,\,\,\forall\,  \theta>x_1\,.
$$
By (\ref{ypiupiccolodib2}), $x_2\leq b_2$ and so
$$
-x_1x_2+\theta(x_1+x_2)\leq \max\{\theta^2, -a_1a_2+\theta(a_1+a_2),-b_1b_2+\theta(b_1+b_2)\},\,\,\theta
\in (x_1,b_2].
$$
Since $\theta>x_1\geq a_1$ and $0<a_1<b_1<a_2<b_2$,
one has
\begin{equation}\label{disu_ana}
x_2\leq \inf_{\theta \in (x_1, b_2]}\frac{\max\{-a_1a_2+\theta(a_1+a_2),-b_1b_2+\theta(b_1+b_2)\}-\theta
x_1}{-x_1+\theta}\,.
\end{equation}
We observe that
\begin{equation}\label{underlinetheta_position}
b_1<\underline{\theta}<a_2
\end{equation}
and
$$
\max\{-a_1a_2+\theta(a_1+a_2),-b_1b_2+\theta(b_1+b_2)\}
=\left\{
\begin{array}{ll}
\hspace{-0,25cm}-a_1a_2+\theta(a_1+a_2), & \hspace{-0,25cm}a_1\leq \theta\leq \underline{\theta}
\\
\hspace{-0,25cm}-b_1b_2+\theta(b_1+b_2), & \hspace{-0,25cm}\underline{\theta}\leq \theta\leq b_2\,.
\end{array}
\right.
$$
To study (\ref{disu_ana}) we distinguish the cases
$x_1\geq \underline{\theta}$
and
$x_1 < \underline{\theta}$.
In the first case, (\ref{disu_ana}) gives
$$
x_2\leq \inf_{\theta \in (x_1, b_2]}\frac{-b_1b_2+\theta(b_1+b_2)-\theta x_1}{-x_1+\theta}\,.
$$
Notice that the sign of the derivative of the function
$$\displaystyle g(\theta) := \frac{-b_1b_2+\theta(b_1+b_2)-\theta x_1}{-x_1+\theta}$$
does not depend on $\theta.$
Therefore we have
\begin{equation}\label{disu_ana_first_case}
x_2 \leq \min\Big\{g(b_2), \lim_{\theta \to x_1^+}g(\theta)\Big\} = g(b_2) = b_2\,,\,\,
\textnormal{if}\,\,x_1\geq \underline{\theta}\,.
\end{equation}
In the second case, (\ref{disu_ana}) yields
$$
x_2 \leq \min\Big\{
\inf_{\theta \in (x_1, \underline{\theta}]}\hspace{-0,1cm}\frac{-a_1a_2+\theta(a_1+a_2)-\theta x_1}
{-x_1+\theta},
\min_{\theta \in [\underline{\theta},b_2]}\hspace{-0,1cm}\frac{-b_1b_2+\theta(b_1+b_2)-\theta x_1}{-x_1+\theta}
\Big\}.
$$
As above, the sign of the derivatives of $g(\theta)$ and
$$\displaystyle f(\theta) :=  \frac{-a_1a_2+\theta(a_1+a_2)-\theta x_1}{-x_1+\theta}$$
does not depend on $\theta$, so we obtain
$$
x_2\leq \min\Big\{
f(\underline{\theta}), \lim_{\theta \to x_1^+}f(\theta),
g(\underline{\theta}), g(b_2)
\Big\} = \min\big\{
f(\underline{\theta}), b_2\big\},
$$
that is,
$$
x_2\leq \min\left\{
\frac{-a_1a_2+\underline{\theta}(a_1+a_2)-\underline{\theta} x_1}{-x_1+\underline{\theta}}, b_2
\right\},\,\,\textnormal{if}\,\,x_1 < \underline{\theta}\,.
$$
Therefore
{if} $x_1 < \underline{\theta}$
\begin{equation}\label{disu_ana_second_case}
x_2\leq
\left\{
\begin{array}{ll}
\displaystyle\frac{-a_1a_2+\underline{\theta}(a_1+a_2)-\underline{\theta} x_1}{-x_1+\underline{\theta}}, & x_1\leq b_1
\\
b_2, & b_1<x_1<\underline{\theta}\,.
\end{array}
\right.
\end{equation}
In conclusion, from (\ref{primaiperbole}), (\ref{ypiupiccolodib2}), (\ref{disu_ana_first_case}) and
(\ref{disu_ana_second_case}) we get
$$
\frac{a_1 a_2}{x_1}\leq x_2\leq \min\left\{b_2,\frac{b_1b_2}{x_1}\right\}\,,\,\,\textnormal{if}\,\,x_1>
b_1
$$
and
$$
\frac{a_1 a_2}{x_1} \leq x_2 \leq
\min\left\{b_2, \frac{b_1b_2}{x_1}, \frac{-a_1a_2+\underline{\theta}
(a_1+a_2)-\underline{\theta} x_1}{-x_1+\underline{\theta}}\right\}
,\,\,\textnormal{if}\,\,x_1 \leq b_1.
$$
By (\ref{underlinetheta_position}) and the fact that
$\displaystyle x_1\to\frac{-a_1a_2+\underline{\theta}(a_1+a_2)-
\underline{\theta} x_1}{-x_1+\underline{\theta}}$
passes through $(a_1,a_2)$ and $(b_1,b_2)$,
we get (\ref{formula_RLambda}).

\noindent
The formula of $\mathrm{int}\operatorname*{Rco}_fE$ is easy to obtain from the above representation.
\end{proof}


We are now in position to prove an existence result for problem
(\ref{differential inclusion isotropic with determinant restriction}).
\begin{theorem}\label{existence isotropic with determinant constraint two points}
Let
$
E=\{\xi \in \mathbb{R}^{2\times 2}: (\lambda_1(\xi),\lambda_2(\xi))\in \Lambda_E,\ \det \xi> 0\}
$
where $\Lambda_E=\{(a_1,a_2),(b_1,b_2)\}$ and $0<a_1<b_1<a_2<b_2$.
Let $\Omega\subset \mathbb{R}^2$ be a bounded open set and let
$\varphi \in C^1_{piec}(\overline{\Omega}, \mathbb{R}^2)$ be such that
$D \varphi \in E \cup \mathrm{int}\operatorname*{Rco}_fE$ a.e. in $\Omega$. Then there exists
a map $u\in \varphi+ W^{1,\infty}_0(\Omega, \mathbb{R}^2)$ such that
$Du(x) \in E$ for a.e. $x$ in $\Omega$.
\end{theorem}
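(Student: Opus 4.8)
The plan is to apply the Baire categories method in the form of Theorem \ref{abstract existence theorem}, taking $K=\operatorname*{int}\operatorname*{Rco}_f E$. Since this set is open and bounded and $\varphi\in C^1_{piec}(\overline{\Omega};\mathbb{R}^2)$ satisfies $D\varphi\in E\cup K$ a.e., once we know that $K$ has the relaxation property with respect to $E$ the existence of $u\in\varphi+W^{1,\infty}_0(\Omega;\mathbb{R}^2)$ with $Du\in E$ a.e. follows at once from the ``moreover'' part of Theorem \ref{abstract existence theorem}. First I would record that $E$ is compact: its matrices have singular values exactly $(a_1,a_2)$ or $(b_1,b_2)$, so on each orbit $\det\xi=\pm a_1a_2$ or $\pm b_1b_2$, and the constraint $\det\xi>0$ selects the closed piece $\{\det\xi=a_1a_2\}$ (resp.\ $\{\det\xi=b_1b_2\}$) of a compact orthogonal orbit; hence $E$ is a union of two compact sets. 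Boundedness of $K$ comes from the compactness of $\operatorname*{Rco}_f E$ (Theorem \ref{Ercisotropic}).

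To obtain the relaxation property I would invoke Corollary \ref{corol1}, which reduces the task to producing compact approximating sets $E_\delta$ with $\operatorname*{Rco}_f E_\delta\subset\operatorname*{int}\operatorname*{Rco}_f E$ and satisfying conditions $(i)$ and $(ii)$ of Theorem \ref{sufficientconditionforrelaxation} (with $K_\delta=\operatorname*{Rco}_f E_\delta$, $K=\operatorname*{Rco}_f E$). Mimicking the isotropic construction used in the proof of Theorem \ref{existence theorem compact isotropic}, I would set
$$E_\delta=\{\xi\in\mathbb{R}^{2\times2}:(\lambda_1(\xi),\lambda_2(\xi))\in\{(a_1+\delta,a_2),(b_1,b_2-\delta)\},\ \det\xi>0\},$$
for $\delta>0$ small enough that $0<a_1+\delta<b_1<a_2<b_2-\delta$. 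Each $E_\delta$ is again of the two–point form, so Proposition \ref{rcoftwopoints} applies verbatim and describes both $\operatorname*{Rco}_f E_\delta$ and $\operatorname*{Rco}_f E$ by explicit inequalities (two hyperbolas $x_1x_2=\mathrm{const}$, the mixing curve through the two singular–value points, and the bounds $x_1\ge a_1$, $x_2\le b_2$). Condition $(i)$ is immediate: a matrix in $E_\delta$ and the matrix of $E$ with the same orthogonal factors differ only through their singular values, so its distance to $E$ is $O(\delta)$.

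The technical heart is the comparison of these two regions. The perturbation is chosen so that the determinant of the lower point increases, $(a_1+\delta)a_2>a_1a_2$, while that of the upper point decreases, $b_1(b_2-\delta)<b_1b_2$: thus the lower bounding hyperbola of $\operatorname*{Rco}_f E_\delta$ lies strictly above, and the upper one strictly below, the corresponding hyperbolas of $\operatorname*{Rco}_f E$, and the bound $x_2\le b_2-\delta$ is strictly tighter. I would then verify that the two points $(a_1+\delta,a_2)$ and $(b_1,b_2-\delta)$ lie in $\mathrm{rel\,int}\,R(\Lambda_E)$ and that the mixing curve of $E_\delta$ stays below that of $E$ on their common $x_1$–range; here I would use that the mixing curve $x_1\mapsto\frac{-a_1a_2+\underline{\theta}(a_1+a_2)-\underline{\theta} x_1}{\underline{\theta}-x_1}$ is strictly increasing, since its derivative has the sign of $-(\underline{\theta}-a_1)(\underline{\theta}-a_2)>0$ by \eqref{underlinetheta_position}. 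Comparing the explicit formulas of Proposition \ref{rcoftwopoints} for $E_\delta$ and $E$ then yields $R(\Lambda_{E_\delta})\subset\mathrm{rel\,int}\,R(\Lambda_E)$, i.e. $\operatorname*{Rco}_f E_\delta\subset\operatorname*{int}\operatorname*{Rco}_f E$, and also condition $(ii)$: any point of $\operatorname*{int}\operatorname*{Rco}_f E$ satisfies the strict defining inequalities, which persist for the slightly tighter system defining $\operatorname*{Rco}_f E_\delta$ as soon as $\delta$ is small. With $(i)$, $(ii)$ and the containment established, Corollary \ref{corol1} gives that $\operatorname*{int}\operatorname*{Rco}_f E$ has the relaxation property with respect to $E$, and Theorem \ref{abstract existence theorem} closes the argument. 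I expect the mixing–curve comparison to be the only genuinely delicate point; everything else is bookkeeping with the explicit description of the hull.
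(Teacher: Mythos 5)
Your strategy coincides step for step with the paper's own proof: Theorem \ref{abstract existence theorem} with $K=\operatorname*{int}\operatorname*{Rco}_fE$, Corollary \ref{corol1} with the same approximating sets $E_\delta$ (singular values $(a_1+\delta,a_2)$ and $(b_1,b_2-\delta)$), Proposition \ref{rcoftwopoints} for the explicit description of both hulls, and the same verifications of conditions $(i)$ and $(ii)$. The problem is that the decisive step, $\operatorname*{Rco}_fE_\delta\subset\operatorname*{int}\operatorname*{Rco}_fE$, is asserted rather than proved. As you note, the two hyperbola bounds and the bound $x_2\le b_2-\delta$ compare trivially, so everything reduces to showing that the mixing curve of $E_\delta$ lies strictly below that of $E$, namely
\[
f_\delta(x_1):=\frac{-(a_1+\delta)a_2+\underline{\theta}_\delta(a_1+\delta+a_2)-\underline{\theta}_\delta x_1}{-x_1+\underline{\theta}_\delta}
\;<\;
\frac{-a_1a_2+\underline{\theta}(a_1+a_2)-\underline{\theta}x_1}{-x_1+\underline{\theta}}=:f(x_1)
\]
for all $x_1\in[a_1+\delta,b_1)$. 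The only tool you invoke, the strict monotonicity of $f$ (your derivative computation is correct), delivers exactly the two endpoint comparisons $f_\delta(a_1+\delta)=a_2<f(a_1+\delta)$ and $f_\delta(b_1)=b_2-\delta<b_2=f(b_1)$, i.e. that the two perturbed singular-value pairs lie in $\mathrm{rel\,int}\,R(\Lambda_E)$. That is not enough: both curves are increasing convex hyperbolic arcs, and after clearing the denominators (positive on $[a_1+\delta,b_1]$ since $b_1<\underline{\theta},\underline{\theta}_\delta$) their difference is a quadratic polynomial in $x_1$, which a priori can have two roots inside the interval. Two increasing curves ordered at both endpoints can still cross twice in between, so ``comparing the explicit formulas'' at the endpoints does not yield the inclusion.

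This is precisely where the paper does its real work. It rewrites $f_\delta<f$ as the polynomial inequality \eqref{parabola_retta}, splits according to the sign of $\underline{\theta}_\delta-\underline{\theta}$ (when $\underline{\theta}_\delta\ge\underline{\theta}$ the inequality is immediate, the left-hand side being $\le 0$ and the right-hand side positive), and in the remaining case exploits the specific structure -- a concave parabola vanishing at $a_1$ and $a_2$ versus a decreasing line vanishing at $\underline{\theta}\in(b_1,a_2)$, see \eqref{underlinetheta_position} -- to reduce the whole interval to the single point $x_1=b_1$; the inequality at $b_1$ is then checked to hold precisely when $\delta<b_1-a_1$, which is where the smallness assumption on $\delta$ is actually consumed. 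This computation, or some substitute for it, is missing from your proposal, so the heart of the proof is left open; the remainder of your outline (compactness of $E$, boundedness of $K$, conditions $(i)$ and $(ii)$, and the final appeal to Corollary \ref{corol1} and Theorem \ref{abstract existence theorem}) does match the paper's argument and is fine.
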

\begin{proof}
We will prove the result using Theorem \ref{abstract existence theorem} and Corollary \ref{corol1}.
Let $\delta>0$ be sufficiently small such that
$a_1+\delta<b_1<a_2<b_2-\delta$.
We define
$$
E_{\delta}=\{\xi \in \mathbb{R}^{2\times 2}:(\lambda_1(\xi),\lambda_2(\xi))\in \Lambda_{E_{\delta}},\
\det \xi> 0\},
$$
with $\Lambda_{E_{\delta}}=\{(a_1+\delta,a_2), (b_1, b_2-\delta)\}$. Observe that $E_\delta$ is a compact set.
By Proposition \ref{rcoftwopoints},
$\operatorname*{Rco}_fE_{\delta}$
is given by the  matrices $\xi \in \mathbb{R}^{2\times 2}$ with positive determinant such that
$(x_1,x_2)=(\lambda_1(\xi),\lambda_2(\xi))$ satisfies
\begin{equation}\label{previous_inequality_delta}
a_1 + \delta \leq x_1\leq x_2 \leq b_2 - \delta
\end{equation}
\begin{equation}\label{first_inequality_delta}
x_2\geq \frac{(a_1+\delta) a_2}{x_1}
\end{equation}
\begin{equation}\label{second_inequality_delta}
x_2 \leq
\frac{-(a_1 + \delta) a_2+\underline{\theta}_{\delta}(a_1 + \delta +a_2)-\underline{\theta}_{\delta} x_1}
{-x_1+\underline{\theta}_{\delta}}, \quad \text{if}\ a_1+\delta\le x_1\le b_1
\end{equation}
\begin{equation}\label{third_inequality_delta}
x_2 \leq
\frac{b_1 (b_2-\delta)}{x_1}
\end{equation}
where
$$
\underline{\theta}_{\delta}=\frac{-(a_1+\delta) a_2+b_1 (b_2-\delta)}{b_1+b_2-a_1-a_2-2\delta}.
$$
We are going to verify the hypotheses of Corollary \ref{corol1}.
We start by proving that
$\operatorname*{Rco}_fE_{\delta} \subset \mathrm{int}\operatorname*{Rco}_fE$.
Let $\xi \in \operatorname*{Rco}_fE_{\delta}$ and denote $(\lambda_1(\xi),\lambda_2(\xi))$ by $(x_1,x_2)$.
Since $(x_1,x_2)$ satisfies
inequalities (\ref{previous_inequality_delta}),
(\ref{first_inequality_delta}) and (\ref{third_inequality_delta}), it is clear that \newline
$a_1< x_1\leq x_2 < b_2$,
$\displaystyle
x_2> \frac{a_1 a_2}{x_1}$
and
$\displaystyle
x_2< \frac{b_1 b_2}{x_1}\,.
$
It remains to show that
\begin{equation}\label{inequality x_2}
x_2 <
\frac{-a_1 a_2+\underline{\theta}(a_1 +a_2)-\underline{\theta} x_1}
{-x_1+\underline{\theta}},\quad \text{if}\ a_1< x_1< b_1
\end{equation}
where $\displaystyle \underline{\theta}=\frac{-a_1 a_2+b_1 b_2}{b_1+b_2-a_1-a_2}$.
Since $x_1\geq a_1+\delta$, it suffices to show that if $x_1 \in [a_1+\delta,b_1)$, then
$$
\frac{-(a_1+\delta) a_2+\underline{\theta}_{\delta}(a_1+\delta+a_2)-\underline{\theta}_{\delta}
x_1}{-x_1+\underline{\theta}_{\delta}}
<
\frac{-a_1 a_2+\underline{\theta}(a_1+a_2)-\underline{\theta} x_1}{-x_1+\underline{\theta}}.
$$
As $b_1<\underline{\theta}<a_2$ and
$b_1<\underline{\theta}_{\delta}<a_2$,
the above inequality is equivalent to
\begin{equation}\label{parabola_retta}
(\underline{\theta}_{\delta}-\underline{\theta})(x_1-a_1)(x_1-a_2)<
\delta(-x_1+\underline{\theta})(a_2-\underline{\theta}_{\delta})\,,
\end{equation}
for $x_1 \in [a_1+\delta,b_1)$.
This inequality holds whenever $\underline{\theta}_{\delta}-\underline{\theta}>0$,
since the left hand side of (\ref{parabola_retta}) is negative and the right hand side is positive.
To show it also holds in the case $\underline{\theta}_{\delta}-\underline{\theta}<0$,
we notice that the graph of
$x_1 \to (\underline{\theta}_{\delta}-\underline{\theta})(x_1-a_1)(x_1-a_2)$
is a concave parabola passing through $(a_1,0),(a_2,0)$, whereas the graph of
$x_1 \to \delta(-x_1+\underline{\theta})(a_2-\underline{\theta}_{\delta})$ is a straight line with negative
slope passing through $(\underline{\theta},0)$. Therefore it is sufficient to prove (\ref{parabola_retta})
for $x_1=b_1$, that is,
$$
\left[\frac{-(a_1+\delta) a_2+b_1 (b_2-\delta)}{b_1+b_2-a_1-a_2-2\delta}-
\frac{-a_1 a_2+b_1 b_2}{b_1+b_2-a_1-a_2}\right]
(b_1-a_1)(b_1-a_2) <
$$
$$
<\delta\left[-b_1+\frac{-a_1 a_2+b_1 b_2}{b_1+b_2-a_1-a_2}\right]
\left[a_2-\frac{-(a_1+\delta) a_2+b_1 (b_2-\delta)}{b_1+b_2-a_1-a_2-2\delta}\right]\,.
$$
It is not difficult to see that the above inequality holds if and only if $\delta<b_1-a_1$
which is satisfied by the hypotheses on $\delta$.

The other conditions of Corollary \ref{corol1} are easy to check.
Indeed, any $\eta \in E_{\delta}$ can be written as $R\,\textnormal{diag}(a_1+\delta,a_2)\,S$ or
$R\,\textnormal{diag}(b_1,b_2-\delta)\,S$, for some $R,S \in \mathcal{SO}(2)$. In both cases
$\mathrm{dist}(\eta; E)\leq \delta
$.
This proves that for every $\varepsilon>0$ $\mathrm{dist}(\eta; E)\leq \varepsilon$
for every $\eta \in E_{\delta}$, with $\delta\leq \varepsilon$.

To prove the last condition of Corollary \ref{corol1},
let $\eta \in \mathrm{int}\operatorname*{Rco}_fE$.
Since  $a_1+\delta\to a_1$,
$b_2-\delta\to b_2$ and $\underline{\theta}_{\delta}\to \underline{\theta}$, as ${\delta}\to 0$,
we have that
$(\lambda_1(\eta), \lambda_2(\eta))$ satisfies (\ref{first_inequality_delta}),
(\ref{second_inequality_delta}) and
(\ref{third_inequality_delta}) for sufficiently small $\delta$, that is,
$\eta \in \textnormal{Rco}_f E_{\delta}$ for sufficiently small $\delta$.
\end{proof}

\subsubsection{Set of singular values containing a line segment}\label{section Set containing a line segment}

In this section we establish sufficient conditions for existence of solutions to problem
(\ref{differential inclusion isotropic with determinant restriction}) when $n=2$ and $n=3$. Our results rely on
the hypothesis that the set of singular values of the matrices in $E$ contains a line segment. We start by
considering the 2 dimensional case.

\begin{theorem}\label{existence isotropic with determinant constraint n=2}
Let
$E=\left\{\xi \in \mathbb{R}^{2\times 2}: (\lambda_1(\xi),\lambda_2(\xi))\in \Lambda_E,\
\det\xi> 0\right\},$ where
$\Lambda_E\subseteq\{(x_1, x_2)\in \mathbb{R}^2: 0<x_1 \leq x_2\}$, and assume that
$$\Gamma:=\{(a_1+t(b_1-a_1),a_2+t(b_2-a_2)):\ t\in[0,1]\}\subseteq \Lambda_E,$$
with $a_1<b_1$, $a_2<b_2$, and either $a_1<a_2$ or $b_1<b_2$.
Let $$K:=\left\{\xi \in \mathbb{R}^{2\times 2}: (\lambda_1(\xi),\lambda_2(\xi))\in \Lambda_K,\
\det\xi> 0\right\},$$ where
$$ \Lambda_K:=\bigcup_{(\alpha_1,\alpha_2)\in\mathrm{rel\,int}\operatorname*{\Gamma}}
\{(x_1,x_2)\in\mathbb{R}^2:\ x_1x_2=\alpha_1\alpha_2,\ 0<x_1\le x_2<\alpha_2\},$$
and $\mathrm{rel\,int}\operatorname*{\Gamma}$ is the relative interior of $\Gamma$ with respect to the line
joining $(a_1,a_2)$ and $(b_1,b_2)$, that is,
$$\mathrm{rel\,int}\operatorname*{\Gamma}:=\{(a_1+t(b_1-a_1),a_2+t(b_2-a_2)):\ t\in(0,1)\}.$$

Let $\Omega\subset \mathbb{R}^2$ be a bounded open set and let
$\varphi \in C^1_{piec}(\overline{\Omega}, \mathbb{R}^2)$ be such that
$D \varphi(x) \in E \cup K$ for a.e. $x$ in $\Omega$. Then there exists
a map $u\in \varphi+ W^{1,\infty}_0(\Omega, \mathbb{R}^2)$ such that
$Du(x) \in E$ for a.e.  $x$ in $\Omega$.
\end{theorem}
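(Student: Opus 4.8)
The plan is to derive the statement from the abstract existence theorem (Theorem \ref{abstract existence theorem}), so that the whole task reduces to showing that $K$ has the relaxation property with respect to a suitable compact subset of $E$. First I would record the geometry of $K$. Since $\lambda_1,\lambda_2,\det$ are continuous and $(\alpha_1,\alpha_2)\mapsto\alpha_1\alpha_2$ is strictly increasing along $\Gamma$, the set $\Lambda_K$ is exactly the set of $(x_1,x_2)$ with $0<x_1\le x_2$, $a_1a_2<x_1x_2<b_1b_2$, and $x_2$ strictly below the line through $(a_1,a_2)$ and $(b_1,b_2)$. In particular $K$ is open (all constraints are strict inequalities of continuous functions) and bounded (on $K$ one has $\lambda_2<b_2$, hence $|\xi|^2=\lambda_1^2+\lambda_2^2<2b_2^2$). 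I would then fix the compact set $\tilde E:=\{\xi\in\mathbb{R}^{2\times2}:(\lambda_1(\xi),\lambda_2(\xi))\in\Gamma,\ \det\xi>0\}$, which is contained in $E$ because $\Gamma\subseteq\Lambda_E$; it suffices to produce $u$ with $Du\in\tilde E$, and the relaxation property with respect to $\tilde E$ implies the one with respect to $E$ since $B_\delta(\tilde E)\subseteq B_\delta(E)$.

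The heart of the argument is to verify condition $(H)$ of Corollary \ref{corol2} for the pair $(\tilde E,K)$, and I expect it to hold with the uniform bound $L=1$. Given $\xi\in K$, write its singular value decomposition $\xi=R\,\mathrm{diag}(x_1,x_2)\,S$ with $R,S\in\mathcal{O}(2)$ as in (\ref{matrix decomposition}), set $p:=\det\xi=x_1x_2\in(a_1a_2,b_1b_2)$, and let $(\alpha_1,\alpha_2)\in\mathrm{rel\,int}\,\Gamma$ be the unique point of $\Gamma$ with $\alpha_1\alpha_2=p$; note $\alpha_1<\alpha_2$, which holds on $\mathrm{rel\,int}\,\Gamma$ precisely because $a_1<a_2$ or $b_1<b_2$. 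The key move is the rank one perturbation $\eta:=R\left(\begin{smallmatrix}0&s\\0&0\end{smallmatrix}\right)S$: since $\det R\det S=+1$ (as $\det\xi>0$) and the off-diagonal direction is orthogonal to the cofactor of a diagonal matrix, one has $\det(\xi+t\eta)=p$ for every $t$, while $|\xi+t\eta|^2=x_1^2+x_2^2+t^2s^2$. Hence along $t\mapsto\xi+t\eta$ the product $\lambda_1\lambda_2$ stays equal to $p$ and $\lambda_2$ increases with $|t|$: the singular values slide up the hyperbola $x_1x_2=p$ toward $(\alpha_1,\alpha_2)$.

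I would then choose $s=s(\xi)$ so that the endpoints $\xi\pm\eta$ have $\lambda_2=\alpha_2-\delta'$, with $\delta'=\delta'(\delta)$ small and uniform. Both endpoints have the same singular values $(p/(\alpha_2-\delta'),\,\alpha_2-\delta')$, which converge to $(\alpha_1,\alpha_2)$ as $\delta'\to0$; using that $\mathrm{dist}(M;\tilde E)\le|(\lambda_1(M),\lambda_2(M))-(\alpha_1,\alpha_2)|$ for every $M$ with $\det M>0$, a uniform choice of $\delta'$ forces $\xi\pm\eta\in B_\delta(\tilde E)$. Moreover every point of the segment $[\xi-\eta,\xi+\eta]$ keeps $\det=p\in(a_1a_2,b_1b_2)$ and has $\lambda_2\le\alpha_2-\delta'<\alpha_2$, hence lies strictly below the line through $\Gamma$, i.e. in $K=\mathrm{int}\,K$. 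This is exactly $(H)$ with $J=1$: writing $\xi=\tfrac12(\xi-\eta)+\tfrac12(\xi+\eta)$ splits $\xi$ into two matrices both lying in $B_\delta(\tilde E)$, so Corollary \ref{corol2} gives that $K$ has the relaxation property with respect to $\tilde E$. One only needs to observe that for $\xi\notin B_\delta(\tilde E)$ the starting value satisfies $\lambda_2(\xi)<\alpha_2-\delta'$, so that the move genuinely increases $\lambda_2$; this again follows from the distance estimate.

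Finally, to conclude I would apply Theorem \ref{abstract existence theorem} on the open set $\Omega_K:=\{x\in\Omega:D\varphi(x)\in K\}$, which is open since $\varphi\in C^1_{piec}$ and $K$ is open, taking the compact set $\tilde E$, the open bounded set $K$, and the boundary datum $\varphi$, which satisfies $D\varphi\in K\subseteq\tilde E\cup K$ there; this produces $u\in\varphi+W^{1,\infty}_0(\Omega_K;\mathbb{R}^2)$ with $Du\in\tilde E\subseteq E$ a.e. in $\Omega_K$. Extending by $u=\varphi$ on $\Omega\setminus\Omega_K$, where $D\varphi\in E$ a.e., yields $u\in\varphi+W^{1,\infty}_0(\Omega;\mathbb{R}^2)$ with $Du\in E$ a.e., as required. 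The main obstacle is the geometric step of the middle paragraphs: checking that the determinant-preserving rank one slide keeps the entire lamination segment strictly below the line through $\Gamma$ and reaches $B_\delta(\tilde E)$ with a bound on the number of steps independent of $\xi$. Once this is in place, the reduction to the compact set $\tilde E$ and the gluing across $\Omega_K$ are routine.
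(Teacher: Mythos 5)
Your proof is correct, and it takes a genuinely different route from the paper's own. Both arguments begin the same way (replace $E$ by the compact set $\widetilde E$ of matrices with singular values on $\Gamma$, and establish that $K$ is open and bounded), but from there the paper verifies the hypotheses of Theorem \ref{sufficientconditionforrelaxation} directly: it constructs explicit approximating sets $\widetilde E_\delta$, $K_\delta$ by shifting along the hyperbolas, proves openness of $\Lambda_K$ by a sequential contradiction argument, and obtains both the boundedness of $K$ and condition $(iii)$ from the Dacorogna--Tanteri formula (\ref{tanteri}) for $\operatorname*{Rco}$ of a fixed-singular-value set, combined with (\ref{Rico}). You instead go through Corollary \ref{corol2}, checking condition $(H)$ with $L=1$ via the determinant-preserving rank-one perturbation $\eta=R\bigl(\begin{smallmatrix}0&s\\0&0\end{smallmatrix}\bigr)S$; this slide along the hyperbola is precisely the lamination underlying (\ref{tanteri}), so in effect you reprove by hand the special case of \cite{Dacorogna-Tanteri} that the paper cites. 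Your uniformity claims do close: writing $p=x_1x_2=\alpha_1\alpha_2$, one has $x_1-\alpha_1=x_1(\alpha_2-x_2)/\alpha_2<\alpha_2-x_2$, hence $\mathrm{dist}(\xi;\widetilde E)\le\sqrt2\,(\alpha_2-\lambda_2(\xi))$, so outside $B_\delta(\widetilde E)$ there is room $\alpha_2-\lambda_2(\xi)\ge\delta/\sqrt2$ for the slide, while the endpoints' distance to $\widetilde E$ is at most $\delta'$ times a constant depending only on $a_i,b_i$; thus a single uniform $\delta'(\delta)$ works. The endgame also differs: the paper applies Theorem \ref{abstract existence theorem} globally to the pair $(E,K)$, using that relaxation with respect to $\widetilde E\subseteq E$ implies relaxation with respect to $E$ and ``assuming'' $E$ compact, whereas you apply it to $(\widetilde E,K)$ on the open set $\Omega_K=\{x\in\Omega: D\varphi(x)\in K\}$ and glue with $\varphi$ outside; this avoids the compactification of $E$ at the cost of the (routine) gluing and of the observation that $\Omega_K$ coincides, up to a null set, with an open set because $\varphi\in C^1_{piec}$ and $K$ is open. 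One thing the paper's scheme buys is reusability: the approximating-set construction is carried over almost verbatim to the three-dimensional Theorem \ref{existence isotropic with determinant constraint n=3}, where the set $\Lambda_K$ is cut out by two constraints and a single rank-one move no longer suffices, so your argument would there need to be iterated.
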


\begin{proof}
We start by noticing that, by the regularity of the boundary condition $\varphi$, one can assume that $E$ is
compact.
Let $\widetilde{E}$ be the subset of $E$ whose singular values lie in the segment joining $(a_1,a_2)$ and
$(b_1,b_2)$:
$$\widetilde{E}:=\left\{\xi \in \mathbb{R}^{2\times 2}: (\lambda_1(\xi),\lambda_2(\xi))\in
\Lambda_{\widetilde{E}},\ \det\xi> 0\right\},\text{ where }\Lambda_{\widetilde{E}}:=\Gamma.$$
Using Theorem \ref{sufficientconditionforrelaxation}, we will show that $K$ has the relaxation property with
respect to
$\widetilde{E}$ and thus with respect to $E$, since $\widetilde{E}\subseteq E$. The result will then follow
as an application of Theorem \ref{abstract existence theorem}.
Notice first that $\widetilde{E}$ is a bounded set since $\Lambda_{\widetilde{E}}$
is compact and $\lambda_2$ is a norm.

We will prove that the set $\Lambda_K$ is open in
$\{(x_1,x_2)\in\mathbb{R}^2:\ x_1\le x_2\}$. Let $y=(y_1,y_2)\in \Lambda_K$ and assume, by contradiction,
that, for each $\delta>0$, there exists $x=(x_1,x_2)\in B_\delta(y)$ with $x_1\le x_2$ and
$x \notin \Lambda_K$. Therefore, it is possible to construct a sequence $x^n=(x^n_1,x^n_2)$ converging to
$y$ with $x^n_1\le x^n_2$ and $x^n \notin \Lambda_K$.
Observe that, due to the hypotheses on $a_1,a_2,b_1,b_2$, the function
$\psi(t):=(a_1+t(b_1-a_1))(a_2+t(b_2-a_2))$ is
strictly increasing in $[0,1]$ and thus, a continuous bijection between $[a_1a_2,b_1b_2]$ and
$\Lambda_{\widetilde{E}}$
is defined. By definition of $\Lambda_K$, there exists
$(\alpha_1,\alpha_2)\in \mathrm{rel\,int}\operatorname*{\Gamma}$ such that
$y_1y_2=\alpha_1\alpha_2\in (a_1a_2,b_1b_2)$ with $y_2<\alpha_2$. By continuity of the product,
$\lim\limits_{n\to +\infty} x_1^nx_2^n=\alpha_1\alpha_2$. Hence, for sufficiently large $n\in\mathbb{N}$,
$x_1^nx_2^n\in(a_1a_2,b_1b_2)$ and thus, the existence of the bijection referred
to above implies that
$x^n_1x^n_2=\beta^n_1\beta^n_2$, for some
$(\beta^n_1,\beta^n_2)\in\mathrm{rel\,int}\operatorname*{\Gamma}$ with
$\lim\limits_{n\to +\infty}  \beta^n_1\beta^n_2=\alpha_1\alpha_2$. In particular, again by the continuity of the bijection
between $[a_1a_2,b_1b_2]$ and $\Lambda_{\widetilde{E}}$, $\lim\limits_{n\to +\infty}  (\beta^n_1,\beta^n_2)= (\alpha_1,\alpha_2)$.
Since, by hypothesis, $x^n \notin \Lambda_K$, then $x_2^n\ge \beta_2^n$ and passing to the limit, as $n\to +\infty$, we get
$y_2\ge \alpha_2$, which is a contradiction. So we conclude that $\Lambda_K$ is open.

Since the singular values are continuous functions and $\lambda_1\le\lambda_2$, it follows that $K$ is an open
set.
It remains thus to show that $K$ has the relaxation property with respect to $\widetilde{E}$, which will be
achieved through Theorem \ref{sufficientconditionforrelaxation}.

Before proceeding, we observe that $K\subset\operatorname*{int}\operatorname*{Rco}\widetilde{E}$. Indeed, it
follows from a result due to Dacorogna and Tanteri \cite{Dacorogna-Tanteri}
(see also \cite[Theorem 7.43]{DirectMethods}) that, for each
$(\alpha_1,\alpha_2)\in\mathrm{rel\,int}\operatorname*{\Lambda_{\widetilde{E}}}$,
\begin{eqnarray}\label{tanteri}
\operatorname*{Rco}\left\{\xi \in \mathbb{R}^{2\times 2}:
(\lambda_1(\xi),\lambda_2(\xi))=(\alpha_1,\alpha_2),\ \det\xi> 0\right\}= \vspace{0.2cm}\nonumber \\
=\left\{\xi \in \mathbb{R}^{2\times 2}:\ \det \xi=\alpha_1\alpha_2,\
\lambda_2(\xi)\le\alpha_2\right\}.
\end{eqnarray}
Therefore, it is clear that $K\subseteq\operatorname*{Rco}\widetilde{E}$ and since it is open, the desired
inclusion follows.
Moreover this inclusion implies that $K$ is bounded since $\widetilde{E}$ is bounded.

We also note that, since $\Lambda_{\widetilde{E}}\subset\{(x_1,x_2)\in\mathbb{R}^2:\ 0<x_1\le x_2\}$,
\begin{equation}\label{GammaKcharact}
\Lambda_K=\bigcup_{(\alpha_1,\alpha_2)\in\mathrm{rel\,int}\operatorname*{\Gamma}}
\left\{\left(\alpha_1+c,\frac{\alpha_1\alpha_2}{\alpha_1+c}\right)\in\mathbb{R}^2:
\ 0<c\le \sqrt{\alpha_1\alpha_2}-\alpha_1\right\}.
\end{equation}

Now we will prove the relaxation property introducing convenient approximating sets
$\widetilde{E}_\delta$ and $K_\delta$.
For sufficiently small $\delta$, let
$$c(\delta,\alpha_1,\alpha_2)=\min\{\delta,\sqrt{\alpha_1\alpha_2}-\alpha_1\},$$
$$\widetilde{E}_\delta:=\left\{\xi \in \mathbb{R}^{2\times 2}: (\lambda_1(\xi),\lambda_2(\xi))\in
\Lambda_{\widetilde{E}_\delta},\ \det\xi> 0\right\},$$
$$K_\delta:=\left\{\xi \in \mathbb{R}^{2\times 2}: (\lambda_1(\xi),\lambda_2(\xi))\in \Lambda_{K_\delta},\
\det\xi> 0\right\},$$
where
\begin{eqnarray*}\Lambda_{\widetilde{E}_\delta}:=\left(\bigcup_{(\alpha_1,\alpha_2)\in\mathrm{rel\,int}
\operatorname*{\Lambda_{\widetilde{E}}}}\left\{\left(\alpha_1+c(\delta,\alpha_1,\alpha_2),
\frac{\alpha_1\alpha_2}{\alpha_1+c(\delta,\alpha_1,\alpha_2)}\right)\right\}\right)\bigcap\vspace{0.2cm}\\
\bigcap\left\{(x_1,x_2)\in\mathbb{R}^2:\ 0<x_1\le x_2,\ a_1a_2+\delta\le x_1x_2\le b_1b_2-\delta\right\},
\end{eqnarray*}
$$
\Lambda_{K_\delta}:=\bigcup_{(\alpha_1,\alpha_2)\in\Lambda_{\widetilde{E}_\delta}}\{(x_1,x_2)\in\mathbb{R}^2:\
x_1x_2=\alpha_1\alpha_2,\ 0<x_1\le x_2\le\alpha_2\}.$$

We proceed with the proof of conditions $(i)$, $(ii)$ and $(iii)$ of
Theorem \ref{sufficientconditionforrelaxation}.

To prove condition $(i)$, by the matrix decomposition (\ref{matrix decomposition}), it is enough to show that,
for any given $\varepsilon>0$, there exists $\delta_0>0$ such that
$$\mathrm{dist}((x_1,x_2); \Lambda_{\widetilde{E}})\le\varepsilon,\
\forall\ (x_1,x_2)\in \Lambda_{\widetilde{E}_\delta},\ \delta\in(0,\delta_0].$$
The elements of $\Lambda_{\widetilde{E}_\delta}$ are of the form
$\left(\alpha_1+c(\delta,\alpha_1,\alpha_2),\frac{\alpha_1\alpha_2}
{\alpha_1+c(\delta,\alpha_1,\alpha_2)}\right)$
for some
$(\alpha_1,\alpha_2)\in\mathrm{rel\,int}\operatorname*{\Lambda_{\widetilde{E}}}\subset\Lambda_{\widetilde{E}}$.
Thus, choosing
$\delta_0=\frac{\varepsilon}{\sqrt{1+b_2^2/a_1^2}}$, we achieve the desired condition, since
$$\left|\left(\alpha_1+c(\delta,\alpha_1,\alpha_2),
\frac{\alpha_1\alpha_2}{\alpha_1+c(\delta,\alpha_1,\alpha_2)}\right)-(\alpha_1,\alpha_2)\right|\le
\delta\sqrt{1+b_2^2/a_1^2},$$
where we have used the fact that any
$(\alpha_1,\alpha_2)\in \mathrm{rel\,int}\operatorname*{\Lambda_{\widetilde{E}}}$
satisfies $\alpha_1\ge a_1$ and $\alpha_2\le b_2$ (this follows from the hypotheses on $a_1,a_2,b_1,b_2$).

Now we prove condition $(ii)$. Let $\eta\in \operatorname*{int}K=K$. It suffices to show that, for sufficiently
small $\delta$, $\lambda_1(\eta)\lambda_2(\eta)=\beta_1\beta_2$, for some
$(\beta_1,\beta_2)\in\Lambda_{\widetilde{E}_\delta}$ with $\lambda_2(\eta)\le \beta_2$.
By (\ref{GammaKcharact}),
$(\lambda_1(\eta),\lambda_2(\eta))=\left(\alpha_1+c,\frac{\alpha_1\alpha_2}{\alpha_1+c}\right)$ for some
$(\alpha_1,\alpha_2)\in\mathrm{rel\,int}\operatorname*{\Lambda_{\widetilde{E}}}$ and
$0<c\le \sqrt{\alpha_1\alpha_2}-\alpha_1$.
The monotonicity of the function $\psi$ introduced above, yields $\alpha_1\alpha_2\in(a_1a_2,b_1b_2)$.
Thus, for small $\delta$, one has $\alpha_1\alpha_2\in [a_1a_2+\delta, b_1b_2-\delta]$. Defining
$(\beta_1,\beta_2)=\left(\alpha_1+c(\delta,\alpha_1,\alpha_2),\frac{\alpha_1\alpha_2}
{\alpha_1+c(\delta,\alpha_1,\alpha_2)}\right)$, we observe that
$(\beta_1,\beta_2)\in \Lambda_{\widetilde{E}_\delta}.$
Finally, $\lambda_2(\eta)<\beta_2$ is equivalent to $c>c(\delta,\alpha_1,\alpha_2)$ and this is true for
sufficiently small
$\delta$, since $\displaystyle \lim_{\delta\rightarrow 0}c(\delta,\alpha_1,\alpha_2)=0$ and $c>0$.

It remains to prove condition $(iii)$. We notice that, using (\ref{tanteri}),
we conclude that any $\xi\in K_\delta$ belongs to
$$\operatorname*{Rco}\left\{\xi \in \mathbb{R}^{2\times 2}:
(\lambda_1(\xi),\lambda_2(\xi))=(\alpha_1,\alpha_2),\
\det\xi> 0\right\},$$
for some $(\alpha_1,\alpha_2)\in \Lambda_{\widetilde{E}_\delta}$. Since
$\Lambda_{\widetilde{E}_\delta}\subset\Lambda_K$,
this hull is a subset of $K=\operatorname*{int}K$ and condition $(iii)$ follows from (\ref{Rico}) (see also
Remark \ref{remark Rco characterization} - 3)).
\end{proof}\medskip

We consider next the 3 dimensional version of the previous result.

\begin{theorem}\label{existence isotropic with determinant constraint n=3}
Let
$E = \left\{\xi \in \mathbb{R}^{3\times 3}: (\lambda_1(\xi),\lambda_2(\xi),\lambda_3(\xi))\in \Lambda_E,\
\det\xi> 0\right\},$ \newline
where $\Lambda_E\subseteq\{(x_1, x_2, x_3)\in \mathbb{R}^3: 0<x_1 \leq x_2\leq x_3\},$ and assume that
$$\Gamma:=\{(a_1+t(b_1-a_1),a_2+t(b_2-a_2), a_3+t(b_3-a_3)):\ t\in[0,1]\}\subseteq \Lambda_E,$$
with $a_1<b_1$, $a_2<b_2$,
$a_3<b_3$ and either $a_1<a_2<a_3$ or $b_1<b_2<b_3$.
Let $$K:=\left\{\xi \in \mathbb{R}^{3\times 3}: (\lambda_1(\xi),\lambda_2(\xi),\lambda_3(\xi))\in \Lambda_K,\
\det\xi> 0\right\},$$ where
$$ \begin{array}{l}\displaystyle
\Lambda_K:=\bigcup_{(\alpha_1,\alpha_2,\alpha_3)\in\mathrm{rel\,int}\operatorname*{\Gamma}}
\left\{(x_1,x_2,x_3)\in\mathbb{R}^3:\ x_1 x_2 x_3=\alpha_1\alpha_2\alpha_3,\right.\vspace{0.1cm}\\
\left.\hspace{4cm} 0<x_1\le x_2\le x_3<\alpha_3,\ x_2 x_3<\alpha_2\alpha_3\right\},\end{array}$$
and $\mathrm{rel\,int}\operatorname*{\Gamma}$ is the relative interior of $\Gamma$ with respect to the line
joining $(a_1,a_2,a_3)$ and $(b_1,b_2,b_3)$, that is,
$$\mathrm{rel\,int}\operatorname*{\Gamma}:=\{(a_1+t(b_1-a_1),a_2+t(b_2-a_2), a_3+t(b_3-a_3)):\ t\in(0,1)\}.$$

Let $\Omega\subset \mathbb{R}^3$ be a bounded open set and let
$\varphi \in C^1_{piec}(\overline{\Omega}, \mathbb{R}^3)$ be such that
$D \varphi(x) \in E \cup K$ for a.e. $x$ in $\Omega$. Then there exists
a map $u\in \varphi+ W^{1,\infty}_0(\Omega, \mathbb{R}^3)$ such that
$Du(x) \in E$ for a.e. $x$ in $\Omega$.
\end{theorem}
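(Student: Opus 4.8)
The plan is to follow closely the structure of the proof of Theorem \ref{existence isotropic with determinant constraint n=2}, reducing the statement to an application of Theorem \ref{sufficientconditionforrelaxation} followed by Theorem \ref{abstract existence theorem}. First, since $\varphi \in C^1_{piec}$, I would argue as in the planar case that it suffices to treat $E$ compact. I then set $\widetilde{E}$ to be the subset of $E$ whose singular values lie on the segment $\Gamma$, namely
$$\widetilde{E} := \{\xi \in \mathbb{R}^{3\times 3}: (\lambda_1(\xi),\lambda_2(\xi),\lambda_3(\xi)) \in \Gamma,\ \det\xi > 0\},$$
which is bounded because $\Gamma$ is compact and $\lambda_3$ is a norm. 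Since $\widetilde{E} \subseteq E$, it is enough to show that $K$ has the relaxation property with respect to $\widetilde{E}$.

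The second block of the argument establishes that $K$ is open and bounded. I would prove that $\Lambda_K$ is open in $\{x_1 \le x_2 \le x_3\}$ by the contradiction scheme used in the $n=2$ proof: exploiting the strict monotonicity of $t \mapsto (a_1 + t(b_1-a_1))(a_2+t(b_2-a_2))(a_3+t(b_3-a_3))$ along the segment and the continuity of the resulting parametrization of $\mathrm{rel\,int}\Gamma$ by the determinant value, one passes a hypothetical sequence of points outside $\Lambda_K$ to the limit and contradicts one of the strict inequalities $x_3 < \alpha_3$ or $x_2 x_3 < \alpha_2\alpha_3$; continuity of the singular values then gives that $K$ is open. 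For boundedness I would invoke the three-dimensional analogue of the Dacorogna and Tanteri formula (\ref{tanteri}): for each $(\alpha_1,\alpha_2,\alpha_3) \in \mathrm{rel\,int}\Gamma$,
$$\operatorname*{Rco}\{\xi: (\lambda_1(\xi),\lambda_2(\xi),\lambda_3(\xi)) = (\alpha_1,\alpha_2,\alpha_3),\ \det\xi > 0\} = \{\xi: \det\xi = \alpha_1\alpha_2\alpha_3,\ \lambda_3(\xi) \le \alpha_3,\ \lambda_2(\xi)\lambda_3(\xi) \le \alpha_2\alpha_3\}.$$
From this $K \subseteq \operatorname*{Rco}\widetilde{E}$ follows, and openness upgrades it to $K \subseteq \operatorname*{int}\operatorname*{Rco}\widetilde{E}$, so that boundedness of $K$ is inherited from $\widetilde{E}$.

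The third block is the verification of conditions $(i)$, $(ii)$, $(iii)$ of Theorem \ref{sufficientconditionforrelaxation}. I would introduce approximating sets $\widetilde{E}_\delta$ and $K_\delta$ mirroring the planar construction: parametrize the points of $K$ lying over a given $(\alpha_1,\alpha_2,\alpha_3) \in \mathrm{rel\,int}\Gamma$, shift them inward by a cutoff $c(\delta,\cdot)$ chosen so that the shifted singular values still obey the two product inequalities at fixed determinant, and intersect with a slab $a_1a_2a_3 + \delta \le x_1x_2x_3 \le b_1b_2b_3 - \delta$. Condition $(i)$ is a distance estimate flowing from the explicit form of the shift together with the bounds $\alpha_1 \ge a_1$ and $\alpha_3 \le b_3$, valid on $\mathrm{rel\,int}\Gamma$ by the hypotheses on the $a_i,b_i$. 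Condition $(ii)$ amounts to checking that each $\eta \in K$ is captured by the $\widetilde{E}_\delta$-level sets for small $\delta$, which holds since the cutoff tends to $0$ and the strict inequalities defining $K$ leave room. Condition $(iii)$ is then obtained as in the $n=2$ case: using the three-dimensional version of (\ref{tanteri}) together with the representation (\ref{Rico}) of $\operatorname*{Rco}$ as iterated rank one laminates (cf. Remark \ref{remark Rco characterization}-3)), every $\xi \in K_\delta$ lies in the rank one convex hull of a fixed-singular-value slice sitting inside $K = \operatorname*{int}K$, which supplies the required $(H_I(\operatorname*{int}K))$ decomposition with almost all the mass in $\widetilde{E}_\delta$.

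The main obstacle I expect is the three-dimensional analogue of the Dacorogna and Tanteri formula, and with it the correct bookkeeping of the two coupled constraints $\lambda_3 \le \alpha_3$ and $\lambda_2\lambda_3 \le \alpha_2\alpha_3$. Unlike the planar case, where the single inequality $\lambda_2 \le \alpha_2$ suffices, here the lamination realizing condition $(iii)$ must respect both product constraints simultaneously, and the openness argument for $\Lambda_K$ must control both strict inequalities along the limiting sequence. Once the hull formula above is in hand, the remaining estimates are routine adaptations of the planar computations, and the conclusion follows from Theorem \ref{abstract existence theorem}.
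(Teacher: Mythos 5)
Your proposal is correct and takes essentially the same route as the paper, whose own proof of Theorem \ref{existence isotropic with determinant constraint n=3} is precisely a reduction to the argument of Theorem \ref{existence isotropic with determinant constraint n=2}, carried out with explicit three-dimensional approximating sets $\widetilde{E}_\delta$, $K_\delta$ of exactly the kind you describe. The one obstacle you flag --- the three-dimensional analogue of (\ref{tanteri}) --- is not actually a gap: the result of Dacorogna and Tanteri \cite{Dacorogna-Tanteri} (see also \cite[Theorem 7.43]{DirectMethods}) holds in arbitrary dimension $n$, and for $n=3$ it yields exactly the hull formula you wrote down, with the two coupled constraints $\lambda_3(\xi)\le\alpha_3$ and $\lambda_2(\xi)\lambda_3(\xi)\le\alpha_2\alpha_3$.
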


\begin{proof} The proof of this result follows the lines of that of
Theorem \ref{existence isotropic with determinant constraint n=2}. Due to the heavy notation we won't present
it here in full detail. The reader can follow the proof of
Theorem \ref{existence isotropic with determinant constraint n=2} taking into account that in this case the
set $\Lambda_K$ can be written in the form
$$ \begin{array}{l}\displaystyle
\Lambda_K=\bigcup_{(\alpha_1,\alpha_2,\alpha_3)\in\mathrm{rel\,int}\operatorname*{\Gamma}}
\left\{\left(\alpha_1+c_1,\frac{\alpha_1\alpha_2}{\alpha_1+c_1}+c_2,
\frac{\alpha_1\alpha_2\alpha_3}{\alpha_1\alpha_2+c_2(\alpha_1+c_1)}\right)\in\mathbb{R}^3:
\right.\vspace{0.1cm}\\ \left.
\hspace{1.5cm} \displaystyle c_1>0,\ c_2\ge \alpha_1+c_1-\frac{\alpha_1\alpha_2}{\alpha_1+c_1},\
0<c_2\le\sqrt{\frac{\alpha_1\alpha_2\alpha_3}{\alpha_1+c_1}}-\frac{\alpha_1\alpha_2}{\alpha_1+c_1}
\right\}.\end{array}$$
The approximating sets $\widetilde{E}_\delta$ and $K_\delta$ can be defined, for
sufficiently small $\delta$, by
$$\widetilde{E}_\delta:=\left\{\xi \in \mathbb{R}^{3\times 3}:
(\lambda_1(\xi),\lambda_2(\xi),\lambda_3(\xi))\in
\Lambda_{\widetilde{E}_\delta},\ \det\xi> 0\right\},$$
$$K_\delta:=\left\{\xi \in \mathbb{R}^{3\times 3}: (\lambda_1(\xi),\lambda_2(\xi),\lambda_3(\xi))\in
\Lambda_{K_\delta},\
\det\xi> 0\right\},$$
where $\Lambda_{\widetilde{E}_\delta}$ is the set
\begin{eqnarray*}\Big(\bigcup_{(\alpha_1,\alpha_2,\alpha_3)\in\mathrm{rel\,int}\operatorname*{\Gamma}}
\Big\{\Big(\alpha_1+\delta,\frac{\alpha_1\alpha_2}{\alpha_1+\delta}+c(\delta,\alpha),
\frac{\alpha_1\alpha_2\alpha_3}{\alpha_1\alpha_2+c(\delta,\alpha)(\alpha_1+\delta)}\Big)\Big\}\Big)
\vspace{0.2cm}\\
\hspace{-0,2cm}\bigcap\left\{(x_1,x_2,x_3)\in\mathbb{R}^3:\hspace{-0,2cm}\ 0<x_1\le x_2\le x_3,
a_1a_2a_3+\delta\le x_1x_2x_3\le b_1b_2b_3-\delta\right\}
\end{eqnarray*}
$\displaystyle{
c(\delta,\alpha)=\min\left\{\delta,\sqrt{\frac{\alpha_1\alpha_2\alpha_3}{\alpha_1+\delta}}-
\frac{\alpha_1\alpha_2}{\alpha_1+\delta}\right\}}$, $\alpha=(\alpha_1,\alpha_2,\alpha_3)$ and
$$ \begin{array}{l}\displaystyle
\Lambda_{K_\delta}:=\bigcup_{(\alpha_1,\alpha_2,\alpha_3)\in\Lambda_{\widetilde{E}_\delta}}
\left\{(x_1,x_2,x_3)\in\mathbb{R}^3:\ x_1x_2x_3=\alpha_1\alpha_2\alpha_3,\right.\vspace{0.1cm}\\ \left.
\hspace{5.6cm} 0<x_1\le x_2\le x_3\le\alpha_3,\ x_2x_3\le\alpha_2\alpha_3\right\}.\end{array}$$
\end{proof}

\section*{Acknowledgements}
The research of Ana Cristina Barroso was partially supported
by Funda\c c\~ao para a Ci\^{e}ncia e Tecnologia, through Financiamento Base 2010-ISFL/1/209,
PTDC/MAT/109973/2009 and\, UTA-CMU/MAT/0005/2009.
The research of Ana Margarida Ribeiro was partially supported
by Funda\c c\~ao para a Ci\^{e}ncia e Tecnologia, through Financiamento Base 2010-ISFL\-/1/297,
\-PTDC/MAT/\-109973/\-2009 and\, UTA-CMU/MAT/0005/2009.
This work was undertaken during a visit of Ana Margarida Ribeiro
to the Mathematics Department of the \'Ecole Polytechnique F\'ed\'erale de Lausanne
whose kind hospitality and support is gratefully acknowledged.

\end{document}